\newtheorem{prop}{Proposition}[section]
\newtheorem{thm}[prop]{Theorem}
\newtheorem{lem}[prop]{Lemma}
\newtheorem{cor}[prop]{Corollary}
\theoremstyle{remark}
\newtheorem{rem}[prop]{Remark}
\begin{document}

	\title[On the automatic regularity of derivations]
	{On the automatic regularity of derivations \\ from Riesz subalgebras of $\cal L^r(X)$}
	
	\author{A.~Blanco}
	\address{Mathematical Sciences Research Centre, Queen's University Belfast}
	
	\email{a.blanco@qub.ac.uk}
	
	\keywords{Automatic regularity, Banach algebra, Banach lattice, regular operator.}
	
	\subjclass[2000]{Primary 46B28, 47B07, 47L10; Secondary 46B42, 47B65}
	
	\begin{abstract}
		We investigate the automatic regularity of bounded derivations from a Banach lattice algebra of regular operators $\cal A$ into a Banach $\cal A$-module with a Banach lattice structure compatible with the module operations. 
	\end{abstract}
	
	\maketitle

	\section{Introduction}
	
	The problem of finding conditions on a pair of Banach lattices $X$, $Y$, under which every continuous linear map from $X$ to $Y$ is regular (i.e., a linear combination of positive linear maps) is an old one. For instance, it is shown in work of Kantorovich and Vulikh \cite{KV}, dating back to the 1930's, that if $X$ is an AL-space and $Y$ a KB-space, or if $Y$ is order complete and has a strong order unit, then every bounded linear map from $X$ to $Y$ is automatically regular. 
	
	In the setting of Banach lattice algebras, i.e., Banach algebras with a Banach lattice structure compatible with that of the algebra in the sense that the positive cone is closed under the algebra product, it seems only natural to consider the question of automatic regularity for bounded linear maps that somehow take into account the algebra structure. With this in mind, in \cite{Bl1} and \cite{Bl2}, we considered the question for bounded homomorphisms between Banach lattice algebras of regular operators. For instance, we showed in \cite{Bl1} that if the atoms in $X$ form a subsymmetric basis and $Y$ is atomic and reflexive, then every algebra homomorphism $\Theta:\cal A^r(X)\to\cal A^r(Y)$ (resp.~every injective algebra homomorphism $\Theta:\cal L^r(X)\to\cal L^r(Y)$ satisfying $\Theta(\cal L^r(X))\cap \cal F(Y)\ne\{0\}$) is necessarily regular. These results were then extended to the non-atomic setting in \cite{Bl2}. In the case of commutative Banach lattice algebras, it is known that if the algebras are semisimple, then any algebra homomorphism between them is in fact a Riesz homomorphism. This last result follows readily from more general results about semiprime Archimedean $f$-algebras, obtained in \cite{HdeP} (see for instance, Theorem~5.1).
	
	Much in the spirit of the above research, in this note we consider the question of automatic regularity for another very important class of linear maps one can associate to any Banach algebra, and that take into account the algebra structure, the so-called derivations. Recall a derivation from an algebra $\cal A$ to an $\cal A$-module $\frak X$ is a linear map $D:\cal A\to\frak X$ that satisfies the identity $D(ab) = aD(b) + D(a)b$ $(a,b\in\cal A)$. If $\cal A$ is a Banach lattice algebra, one can consider derivations from $\cal A$ to Banach $\cal A$-modules with a vector lattice structure compatible with that of the module, in the sense that the module products preserve positivity (we shall call such modules Banach lattice modules -- see next section). The main concern of the note will be precisely the automatic regularity of such maps. Like in \cite{Bl1} and \cite{Bl2}, the focus will be on the case where $\cal A$ is a Banach lattice algebra of regular operators, and conditions on $\cal A$, that guarantee the regularity of derivations, will be sought (like in \cite{Bl1} and \cite{Bl2}) in terms of the underlying Banach lattices.  
		
	Apart from being of independent interest, there is yet one other reason for our concern with the question of automatic regularity of derivations, and is the close relation between derivations and algebra homomorphisms. It is well-known that if $\cal A$ is an algebra, $\frak X$ is an $\cal A$-module and $D:\cal A\to\frak X$ is a derivation, then $\cal B := \cal A\oplus\frak X$, with multiplication defined by $(a,x)(b,y) := (ab,ay+xb)$ $((a,x),(b,y)\in\cal B)$, is an algebra and $\Theta:\cal A\to\cal B$, $a\mapsto (a,D(a))$ is an algebra homomorphism. If $\cal A$ is a Riesz algebra and $\frak X$ is a Riesz space and an $\cal A$-module such that $ax,xa\in\frak X_+$ whenever $a\in\cal A_+$ and $x\in\frak X_+$, then $\cal B$ is also a Riesz algebra for the order induced by the cone $\cal A_+\times\frak X_+$. Moreover, if $\cal A$ is a Banach lattice algebra and $\frak X$ is a Banach lattice $\cal A$-module (see next section) then $\cal B$, endowed with the norm $\|(a,x)\| := \|a\|+\|x\|$ $((a,x)\in\cal B)$, is also a Banach lattice algebra.
	
	It is now fairly easy to see that if $\cal A$ is a Riesz algebra, $\frak X$ is a Riesz $\cal A$-module and $D:\cal A\to\frak X$ is a derivation, then $D$ is regular if and only if $\Theta:\cal A\to\cal B$ (with $\cal B$ and $\Theta$ as above) is regular. One could thus use the results from \cite{Bl1} and \cite{Bl2} to obtain results on automatic regularity of derivations from Banach lattice algebras of regular operators. However, some of the hypotheses of the main results of \cite{Bl1} and \cite{Bl2} were tailored to the situation in which the codomain of the homomorphism is a Riesz algebra ideal of the algebra of regular operators on some Banach lattice. For this reason, a direct approach to the problem seems more appropriate. Apart from simplifying our arguments and making the note more self-contained, it will yield more general results. Some of the assumptions of \cite[Theorem~3.2]{Bl1} will be retained, though, like for instance, the existence of a sequence of o-minimal pairwise orthogonal idempotents in the domains of our maps, being after all a fairly common feature of the Banach lattice algebras we are interested in. 
	
	It is the author's hope, in view of the connection explained above, that the results of the note will in turn shed further light into the more general problem of identifying the exact conditions under which every bounded algebra homomorphism from a Banach lattice algebra of regular operators to another Banach lattice algebra is regular. It might be worth pointing out, that when $\frak X\ne\{0\}$ the algebra $\cal A\oplus\frak X$, as defined above, is  not semiprime (and hence, semisimple). As a result, in the case where $\cal A$ is commutative and $D:\cal A\to\frak X$ is a derivation it does not seem possible to draw (at least directly) from the results of \cite{HdeP}, any conclusion about the regularity of the algebra homomorphism $\Theta$ associated with $D$ (as above), and hence, about the regularity of $D$. 
	
	We should add that a more general class of modules over Banach lattice algebras than the one we consider here, was introduced in \cite{Ro}, under the name of regular Banach lattice modules (mainly with the purpose of defining a theory of regular cohomology). In the case of a regular $\cal A$-module $\frak X$, the algebra homomorphisms from $\cal A$ to the algebra of bounded operators on $\frak X$, induced by the module products, are only assumed to be regular -- here, we consider only the special case in which they are positive. Although we have not checked all the details, it seems the results/arguments of the note can be extended with little difficulty to the more general class of regular Banach lattice modules. One such extension could prove useful in comparing some of the standard (bounded) cohomological notions with their order counterparts (see \cite{Ro}).   
	
	Regarding the organization of the note, in the next section the reader will find most of the notation and terminology used in the paper. Section~3 is devoted to the atomic case. In it, we shall first prove a general result about continuous derivations from Riesz topological algebras to Riesz topological modules (see below for definitions); then proceed, using this result, to establish our main result concerning derivations from Banach lattice algebras of regular operators on atomic Banach lattices. Some concrete cases are considered at the end of the section. Lastly, in Section~4, we present one possible extension of the main result of Section~3 to the situation in which the underlying Banach lattices of the algebras under consideration are not purely atomic. We conclude the section (and the note) by looking at some concrete situations in which the hypotheses of this last result hold.

	\section{Some background and terminology}
	
	In this section, we have gathered some background and terminology used throughout the note.
	
	Given a normed space $X$, we shall denote by $X'$ its topological dual. Furthermore, if $E$ is a subset of $X$, we shall write $\mathrm{sp}(E)$ for its linear span, $[E]$ for the norm closure of $\mathrm{sp}(E)$ and $E^\perp$ for the subspace $\{x'\in X' : E\subseteq\ker{x'}\}$ of $X'$. We shall denote by $X_{[\rho]}$ the closed ball centered at the origin and of radius $\rho$.
	
	By a Riesz space, we shall mean a vector lattice. We shall denote its positive cone by $X_+$. Recall also that a complex Riesz space $X$ is defined as the complexification $X_\Bbb R + iX_\Bbb R$ of a real Riesz space $X_\Bbb R$ in which $\sup_{\theta\in [0,2\pi)} \big((\cos{\theta})\xi + (\sin{\theta})\eta\big)$ exists for any pair $\xi,\eta\in X_{\Bbb R}$. The modulus of an element $x\in X$ is then defined by the formula $|x| := \sup_{\theta\in [0,2\pi)} \big((\cos{\theta})\mathrm{Re}(x) + (\sin{\theta})\mathrm{Im}(x)\big)$, where $\mathrm{Re}:X\to X_{\Bbb R}$, $\xi+i\eta\mapsto\xi$, and $\mathrm{Im}:X\to X_{\Bbb R}$, $\xi+i\eta\mapsto\eta$. A complex normed Riesz space $X$ is then -- by definition -- the complexification of a real normed Riesz space $(X_{\Bbb R},\|\cdot\|_{\Bbb R})$, endowed with the norm $\|x\| := \||x|\|_{\Bbb R}$ $(x\in X)$. 
	
	By a {\bf Riesz algebra} we shall mean an associative algebra $\cal A$, which is also a Riesz space, where $ab\geq 0$ $(a,b\in\cal A_+)$, or equivalently, where $|ab|\leq |a||b|$ $(a,b\in\cal A)$. As in \cite{Bl1}, by a {\bf Riesz algebra ideal} of a Riesz algebra $\cal A$, we shall mean an algebra ideal of $\cal A$, which is also a Riesz subspace. Elements $a$ and $b$ of a Riesz algebra shall be called {\bf orthogonal} if $ab = 0 = ba$ and {\bf disjoint} if $|a|\vee |b| = 0$. Like in \cite{Bl1} and \cite{Bl2}, by an {\bf o-minimal idempotent} of a Riesz algebra $\cal A$, we shall mean an idempotent $e\in\cal A_+$ such that $e\cal Ae = \Bbb Ke$, where $\Bbb K$ stands for the underlying field. We shall say $\cal A$ is semiprime if $\{0\}$ is the only two-sided ideal of $\cal A$ such that $\cal I^2 = \{0\}$.
	
	By a {\bf topological algebra} we shall mean an associative algebra $\cal A$ with a topology $\tau$, such that $(\cal A,\tau)$ is a Hausdorff locally convex topological vector space and multiplication on $\cal A$ is separately continuous. By an approximate identity (a.i.~in short) for a topological algebra $\cal A$, we shall mean a net $(e_i)$ in $\cal A$ such that $\lim_i e_ia = a = \lim_i ae_i$ $(a\in\cal A)$. Also, following \cite{Bl1}, we shall call a sequence $(b_n)$ in a topological algebra $\cal A$, {\bf convergence preserving} if whenever a sequence $(a_n)$ in $\cal A$ converges, so do the sequences $(a_nb_n)$ and $(b_na_n)$. Furthermore, we shall say that a sequence $(a_n)$ in $\cal A$, {\bf factors (or that there is a factorization of it) through} another sequence $(b_n)$ in $\cal A$ if there are convergence preserving sequences $(u_n)$ and $(v_n)$ in $\cal A_+$ such that $a_n = u_n b_{k_n}v_n$ $(n\in\Bbb N)$ for some strictly increasing sequence $(k_n)\subset\Bbb N$. (Note that, unlike in \cite{Bl1}, here we do not require $(u_n)$ and $(v_n)$ to be bounded.) A Riesz algebra $\cal A$, endowed with a norm $\|\cdot\|$, with respect to which $\cal A$ is both a Banach lattice and a Banach algebra shall be said to be a {\bf Banach lattice~algebra}.
	
	Given a Riesz algebra $\cal A$, by a {\bf Riesz $\cal A$-module} we shall mean an $\cal A$-module $\frak X$, which is also a Riesz space, in which $ax,xa\in\frak X_+$ whenever $a\in\cal A_+$ and $x\in\frak X_+$. This last is in fact equivalent to $|ax|\leq |a||x|$ and $|xa|\leq |x||a|$ $(a\in\cal A,\,x\in\frak X)$ (this is straightforward in the real case; for an argument that covers both the real and complex cases see at the end of this section). Given an $\cal A$-module $\frak X$, we shall write $\cal A\frak X$ (resp.~$\frak X\cal A$) for the set $\{ax : a\in\cal A,\, x\in\frak X\}$ (resp.~$\{xa : a\in\cal A,\, x\in\frak X\}$). By a {\bf topological $\cal A$-module}, we shall mean an $\cal A$-module $\frak X$, endowed with a topology $\omega$, with respect to which $(\frak X,\omega)$ is a Hausdorff locally convex topological vector space and both module products, $(a,x)\mapsto ax$, $\cal A\times\frak X\to\frak X$, and $(x,a)\mapsto xa$, $\frak X\times\cal A\to\frak X$, are separately continuous. If $\cal A$ is a Banach lattice algebra and $\frak X$ is a Riesz $\cal A$-module, endowed with a norm $\|\cdot\|$ such that $(\frak X,\|\cdot\|)$ is both a Banach lattice and a Banach $\cal A$-module (i.e., there is some constant $M$ such that $\|ax\|\leq M\|a\|\|x\|$ and $\|xa\|\leq M\|a\|\|x\|$ $(a\in\cal A,\,x\in\frak X)$), then $\frak X$ shall be said to be a {\bf Banach lattice $\cal A$-module}.
	
	Given a linear map $T:X\to Y$ we shall denote by $T(X)$ or $\mathrm{im}\,T$ its image. By a {\bf positive} linear map $T:X\to Y$ between Riesz spaces $X$ and $Y$, we shall mean a linear map such that $T(X_+)\subseteq Y_+$. A linear map between Riesz spaces shall be said to be {\bf regular} if it is a linear combination of positive linear maps. Given Riesz spaces $X$ and $Y$, the collection of all regular maps between them, endowed with the order induced by the cone of positive operators, forms an ordered vector space denoted $\cal L^r(X,Y)$. It is well-known that for $T\in\cal L^r(X,Y)$, $|T|$ exists if and only if $\sup_{|\xi|\leq x} |T\xi|$ exists for every $x\in X_+$. Moreover, if this last holds then $|T|(x) = \sup_{|\xi|\leq x} |T\xi|$ $(x\in X_+)$. In particular, $\cal L^r(X,Y)$ is a Riesz space whenever $Y$ is order complete. In the complex case, $\cal L^r(X,Y)$ is the vector space complexification of $\{T\in\cal L^r(X,Y) : T(X_{\Bbb R})\subseteq Y_{\Bbb R}\}$. 
	
	When $X$ and $Y$ are Banach lattices, $\cal L^r(X,Y)$ is a subspace of the Banach space $\cal B(X,Y)$ of all bounded linear operators from $X$ to $Y$. Endowed with the norm $\|T\|_r := \inf\{\|S\| : S\in\cal B(X,Y),\,|Tx|\leq S|x|\; (x\in X)\}$ $(T\in\cal L^r(X,Y))$, $\cal L^r(X,Y)$ becomes a Banach space, and if $Y$ is order complete, a Banach lattice. To simplify notations, we continue to denote by $\cal L^r(X,Y)$ the Banach space $(\cal L^r(X,Y),\|\cdot\|_r)$. When $X = Y$, we write $\cal L^r(X)$ for $\cal L^r(X,X)$. With composite of maps as product, the latter becomes a Banach algebra, and of course, a Banach lattice algebra if $X$ is order complete. We write $\cal F(X)$ for the linear space of all continuous finite-rank operators on $X$ and $\cal A^r(X)$ for its norm-closure in $\cal L^r(X)$. The latter is well-known to be always a Banach lattice (see \cite[Theorem~1]{Schw}), and if $X$ is atomic, also an order ideal of $\cal L^r(X)$. Note that $\cal A^r(X)$ is a Banach lattice algebra. If $X$ is complex then $\cal A^r(X)$ is the complexification of~$\cal A^r(X_{\Bbb R})$.
	
	Recall also that if $X$ is a Riesz space and $P:X\to X$ is a positive projection, then $P(X)$ is a lattice subspace of $X$, i.e., endowed with the order induced by the positive cone $P(X)\cap X_+$, $P(X)$ is a vector lattice. Its modulus function is given by $x\mapsto P|x|$ $(x\in P(X))$. Moreover, in the case where $X$ is a Banach lattice, $x\mapsto \|P|x|\|$ $(x\in P(x))$ is a Banach lattice norm on $P(X)$, equivalent to the original norm. (For a proof of these facts, see \cite[Theorem~5.59]{AA}). Lastly, by a {\bf band or order projection} on a Banach lattice $X$, we shall mean a projection $P\in\cal L^r(X)$ such that $0\leq P\leq \mathrm{id}_X$. If $P\in\cal L^r(X)$ is a band projection then $|Px| = P|x|$ $(x\in X)$. 
	
	We end the section with a proof of the fact that if $\cal A$ is a Riesz algebra and $\frak X$ is a Riesz $\cal A$-module, then $|ax|\leq |a||x|$ and $|xa|\leq |x||a|$ $(a\in\cal A,\,x\in\frak X)$ (once again, this is mainly for the sake of the complex case, which is not as straightforward as the real case, and for which we do not have a reference). To this end let $\frak Y$ be an order complete Riesz space containing $\frak X$ as a Riesz subspace. Then $\phi:\cal A\to\cal L^r(\frak X,\frak Y)$, $a\mapsto [\phi(a):\frak X\to\frak Y,\; x\mapsto ax]$, is positive and $|\phi(a)|\leq \phi(|a|)$ $(a\in\cal A)$. It follows from the formula for the module of an   element in $\cal L^r(\frak X,\frak Y)$ (see above) that for every $a\in\cal A$ and $x\in\frak X$, $|ax|\leq |\phi(a)|(|x|)\leq \phi(|a|)(|x|) = |a||x|$. A similar argument shows that $|xa|\leq |x||a|$ $(a\in\cal A,\,x\in\frak X)$.
	
	Further terminology will be introduced throughout the note where is needed. For any unexplained material on Banach lattices or regular operators we refer the reader to \cite{AB} and \cite{Sc}.

	\section{The atomic case}
	
	Our first result is a characterization of regularity of derivations in a special situation that involves no topological assumptions. To some extent it will serve as motivation for subsequent results.
	
	\begin{thm}\label{LA}
		Let $\cal A$ be a Riesz algebra with identity $e$ and a set $\{p_i : i\in I\}$ of pair-wise orthogonal o-minimal idempotents such that $\sup\{\sum_{i\in F} p_i : F\subseteq I \text{ finite}\} = e$ and $\dim{p_i\cal Ap_j}\leq 1$ $(i,j\in I)$; let $\frak X$ be an order complete Riesz $\cal A$-module such that $\sup\{(\sum_{i\in F} p_i)x(\sum_{i\in G} p_i) : F,G\subseteq I \text{ finite}\} = exe$ $(x\in\frak X_+)$; and let $D:\cal A\to\frak X$ be a derivation. Then $D$ is regular if and only if $\{\sum_{i\in F} |D(p_i)| : F\subseteq I \text{ finite}\}$ is order bounded.
	\end{thm}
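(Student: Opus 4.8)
The plan is to prove the two implications separately, the forward one being routine and the converse carrying essentially all of the difficulty. For \emph{necessity}, suppose $D$ is regular. Because $\frak X$ is order complete, $\cal L^r(\cal A,\frak X)$ is a Riesz space, so $|D|$ exists and $|D(a)|\le|D|(|a|)$ $(a\in\cal A)$. Since each $p_i$ is positive, $|p_i|=p_i$, and for every finite $F\subseteq I$, writing $q_F:=\sum_{i\in F}p_i$,
$$\sum_{i\in F}|D(p_i)|\le\sum_{i\in F}|D|(p_i)=|D|(q_F)\le|D|(e),$$
the last inequality because $|D|$ is positive and $0\le q_F\le e$. Hence $|D|(e)$ is an order bound for the set in question.

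For \emph{sufficiency}, put $u:=\sup_F\sum_{i\in F}|D(p_i)|$, which exists by hypothesis, and (writing $\delta_\eta$ for the inner derivation $a\mapsto\eta a-a\eta$) aim to produce $\xi\in\frak X$ with $D=\delta_\xi$. Granting this, regularity is automatic, and this is the clean payoff of the whole scheme: decomposing $\xi=\xi_+-\xi_-$ in the vector lattice $\frak X$, each of the four maps $a\mapsto\xi_+a$, $a\mapsto a\xi_+$, $a\mapsto\xi_-a$, $a\mapsto a\xi_-$ is positive because $\frak X$ is a Riesz module, so $D$ is a linear combination of positive maps. I would assemble $\xi$ in two stages, the first using only the idempotents. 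From $D(p_i)=p_iD(p_i)+D(p_i)p_i$ one gets $p_iD(p_i)p_i=0$, and from $p_ip_j=0$ $(i\ne j)$ the identities $D(p_i)p_j=-p_iD(p_j)$ and $p_iD(p_j)=-D(p_i)p_j$. Using $|p_iD(p_i)|\le p_i|D(p_i)|$ and monotonicity of the module action, the partial sums $-\sum_{i\in F}p_iD(p_i)$ are order bounded above by $eu$, hence order-convergent by completeness of $\frak X$ to an element $\xi':=-\sum_i p_iD(p_i)$, and a direct computation gives $\xi'p_j-p_j\xi'=eD(p_j)$ for every $j$.

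The crux, and the main obstacle, is the passage from the idempotents to all of $\cal A$, where $\dim p_i\cal Ap_j\le1$ and o-minimality enter. Since $\cal A$ is spanned, in the relevant order sense, by the subspaces $p_i\cal Ap_j$ (each of dimension $\le1$), it suffices to understand $D$ there, and for $a\in p_i\cal Ap_j$ with $i\ne j$ the Leibniz rule yields $p_iD(a)p_j=D(a)-aD(p_j)-D(p_i)a$, which isolates a ``diagonal'' contribution invisible to $\xi'$ (already in the $2\times2$ matrix case $D(e_{12})-\delta_{\xi'}(e_{12})$ is a nonzero multiple of $e_{12}$). O-minimality, $p_i\cal Ap_i=\Bbb Kp_i$, turns the compatibility relations among these contributions into a cocycle condition, whose solution I would seek as a diagonal element $\zeta=\sum_i\zeta_i$ with $\zeta_i\in p_i\frak Xp_i$ and $\zeta_ia-a\zeta_j=p_iD(a)p_j$; then $\xi:=\xi'+\zeta$ should satisfy $D=\delta_\xi$ first on each $p_i\cal Ap_j$ and then on all of $\cal A$, the latter step exploiting the supremum hypothesis $\sup_{F,G}q_Fxq_G=exe$ to kill the compressions $q_F(D(a)-\delta_\xi(a))q_G$ in the order limit. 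The genuinely hard point is to show the correction $\zeta$ exists \emph{as an element of $\frak X$} --- equivalently, that the diagonal derivation $D-\delta_{\xi'}$ is order bounded; the hypothesis on $\{\sum_{i\in F}|D(p_i)|\}$ governs directly only $\xi'$, so the diagonal part must instead be controlled by combining o-minimality (forcing it into one-dimensional blocks), order completeness of $\frak X$, and the two-sided condition $\sup_{F,G}q_Fxq_G=exe$, the last being also what absorbs the fact that $e$ need not act as the identity on $\frak X$ (so that $eD(p_j)$ must be reconciled with $D(p_j)$, using $eD(e)e=0$). Once $\zeta\in\frak X$ is secured, both the verification $D=\delta_\xi$ and the regularity conclusion are formal.
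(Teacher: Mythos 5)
Your necessity argument is correct, and is in fact a little more direct than the paper's (the paper re-expands $p_i = ep_ip_ie$ and works with band projections on $e\frak Xe$; you simply use $|D(p_i)|\le |D|(p_i)$ and $\sum_{i\in F}p_i\le e$). The fatal problem is the sufficiency direction, which is where all the content lies: your plan is to produce $\xi\in\frak X$ with $D=\delta_\xi$, i.e., to prove $D$ is \emph{inner}. Innerness is strictly stronger than regularity, and it is simply false under the hypotheses of the theorem, so no amount of work on the step you flag as the ``crux'' can complete the scheme. Concretely, take $X=\ell_p$ $(1\le p<\infty)$, $\cal A:=\cal A^r(X)+\Bbb K\,\mathrm{id}_X\subset\cal L^r(X)$, $\frak X:=\cal A^r(X)$ (for atomic $X$ this is an order ideal of $\cal L^r(X)$, hence an order complete Riesz $\cal A$-module), $p_i:=q_i$ the diagonal rank-one projections coming from the unit vector basis, and $D:=\delta_\xi|_{\cal A}$, where $\xi:=\mathrm{diag}(1,0,1,0,\dots)\in\cal L^r(X)$. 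Then $D$ maps $\cal A$ into $\frak X$ (because $\cal A^r(X)$ is an algebra ideal of $\cal L^r(X)$ and $\delta_\xi(\mathrm{id}_X)=0$), the hypotheses of the theorem hold (the compression identity $\sup_{F,G}\pi_Fx\pi_G=x$ for $x\in\frak X_+$ follows from order continuity of positive operators on $\ell_p$), and $D(q_i)=\xi q_i-q_i\xi=0$, so the order-boundedness condition is satisfied trivially; accordingly $D$ is regular, as the theorem asserts. But $D$ is not inner: if $D=\delta_\eta$ with $\eta\in\frak X$, then $\xi-\eta$ commutes with every finite-rank operator, hence $\xi-\eta=c\,\mathrm{id}_X$ for some scalar $c$, so $\eta=\xi-c\,\mathrm{id}_X$ has diagonal entries $(1-c,-c,1-c,-c,\dots)$, which do not tend to $0$, contradicting $\eta\in\cal A^r(X)$. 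Note this example breaks your argument exactly at the point you yourself singled out: the diagonal correction $\zeta$ solving $\zeta_iq_{ij}-q_{ij}\zeta_j=q_iD(q_{ij})q_j$ exists formally (it is $\xi+c\,\mathrm{id}_X$), but never as an element of $\frak X$.

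Two further steps would need repair even if innerness were available. First, order-boundedness of the partial sums $\sum_{i\in F}p_iD(p_i)$ does not by itself give order convergence, since the terms have no fixed sign; you must split into positive and negative parts before invoking order completeness (this is fixable). Second, the identity $\xi'p_j-p_j\xi'=eD(p_j)$ requires pulling the one-sided limit $\sup_F\pi_F=e$ through the module action applied to $D(p_j)$, which is not an order-continuity property you are given: the hypothesis supplies only the two-sided compression identity $\sup_{F,G}\pi_Fx\pi_G=exe$, and only for $x\in\frak X_+$. The paper's proof avoids innerness entirely: using $\dim p_i\cal Ap_j\le 1$ and the two-sided compression hypothesis, it shows that for each $a\in\cal A_+$ the set $\{|eD(t)e| : |t|\le a\}$ is order bounded (by $3e\xi a+e|D(a)|e+3a\xi e$, with $\xi$ the supremum in the hypothesis), and then concludes regularity from the decomposition $D(t)=D(e)t+eD(t)e+tD(e)$, the outer summands being visibly regular and the middle one having a modulus by order completeness of $\frak X$. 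If you want to salvage your approach, that is the viable route: bound $D$ on order intervals rather than trying to trivialize it as an inner derivation.
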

	
	\begin{proof}
		To simplify notations, let $\cal F$ stand for the collection of all finite subsets of $I$. Suppose first $\{\sum_{i\in F} |D(p_i)| : F\in\cal F\}$ is order-bounded and let $\xi := \sup\{\sum_{i\in F} |D(p_i)| : F\in\cal F\}$. Since $D(t) = D(e)t + eD(t)e + tD(e)$ $(t\in\cal A)$, it will suffice to show that $\{|eD(t)e| : |t|\leq a\}$ is order bounded for every $a\in\cal A_+$.
		
		Set $\pi_F := \sum_{i\in F} p_i$ $(F\in\cal F)$. Then, for every $t\in\cal A$ and $F,G\in\cal F$,
		\[
		\begin{split}
			\pi_F D(\pi_F&t\pi_G)\pi_G \\
			&= \pi_F\bigg(\sum_{i\in F} \sum_{j\in G} D(p_itp_j)\bigg)\pi_G \\
			&= \pi_F \bigg(\sum_{i\in F} \sum_{j\in G}
			\big(D(p_i)p_itp_j + p_iD(p_itp_j)p_j + p_itp_jD(p_j)\big)\bigg)\pi_G \\
			&=\pi_F \sum_{i\in F} D(p_i)p_it\pi_G +
			\sum_{i\in F} \sum_{j\in G} p_iD(p_itp_j)p_j +
			\pi_Ft\sum_{j\in G} p_jD(p_j)\pi_G,
		\end{split}
		\]
		and so,
		\begin{equation}\label{F}
			\begin{split}
				|\pi_FD(\pi_F&t\pi_G)\pi_G|
				\leq e\xi|t| +
			    \sum_{i\in F} \sum_{j\in G} |p_iD(p_itp_j)p_j| +
				|t|\xi e.
			\end{split}
		\end{equation}
		
		Since $\dim{p_i\cal Ap_j}\leq 1$ $(i,j\in I)$, for every pair $i,j\in I$ there exists $e_{ij}\in\cal A_+$ such that for every $t\in\cal A$, $p_itp_j = \lambda_t e_{ij}$ for some $\lambda_t$ in the underlying field. If $|t|\leq a\in\cal A_+$, then $|\lambda_t| e_{ij} = |p_itp_j|\leq p_iap_j = \lambda_a e_{ij}$, and hence,
		\[
		|p_iD(p_itp_j)p_j| = |\lambda_t||p_iD(e_{ij})p_j|\leq \lambda_a |p_iD(e_{ij})p_j| = |p_iD(p_iap_j)p_j|.
		\]
		Thus, if $|t|\leq a\in\cal A_+$, 
		\[
		\begin{split}
			\sum_{i\in F} \sum_{j\in G} |p_iD(p_it&p_j)p_j|
			\leq \sum_{i\in F} \sum_{j\in G} |p_iD(p_iap_j)p_j| \\
			&\leq \sum_{i\in F} \sum_{j\in G} |p_iD(p_i)ap_j|
			+ \sum_{i\in F} \sum_{j\in G} |p_iD(a)p_j| +
			\sum_{i\in F} \sum_{j\in G} |p_iaD(p_j)p_j| \\
			&\leq \pi_F\bigg(\sum_{i\in F} |D(p_i)|\bigg)a\pi_G
			+ \pi_F|D(a)|\pi_G +
			\pi_F a\bigg(\sum_{j\in G} |D(p_j)|\bigg)\pi_G \\
			&\leq e\xi a + e|D(a)|e + a\xi e.
		\end{split}
		\]
		Combining (\ref{F}) with the latter estimate, we obtain that
		\begin{equation}\label{0}
			\sup\{|\pi_F D(\pi_F t\pi_G)\pi_G| : |t|\leq a
			\text{ and } F,G\in\cal F\}\leq 2e\xi a + e|D(a)|e + 2a\xi e.
		\end{equation}
		
		Next, as $e\geq 0$, the map $x\mapsto exe$, $\frak X\to e\frak Xe$, is a positive projection and $e\frak Xe$ is, accordingly, a lattice subspace of $\frak X$. If we let $\leq_e$ stand for the order induced in $e\frak Xe$ by the positive cone $e\frak Xe\cap\frak X_+$, then the module of an element $z\in (e\frak Xe,\leq_e)$, which we shall denote $|z|_e$, is given by the formula $|z|_e = e|z|e$. Clearly, each map $y\mapsto\pi_F y$, $e\frak X e\to e\frak X e$, and $y\mapsto y\pi_G$, $e\frak X e\to e\frak X e$, is a band projection on $(e\frak Xe,\leq_e)$ and we thus have that
		\[
		|\pi_F D(t)\pi_G|_e = |\pi_F eD(t)e\pi_G|_e = \pi_F |eD(t)e|_e\pi_G,
		\]
		for every $t\in\cal A$ and $F,G\in\cal F$. It follows from this and our hypotheses that
		\begin{equation}\label{T}
		    \begin{split}
				|eD(t)e|_e = e|eD(t)e|_e e &= \sup_{F,G} \pi_F|eD(t)e|_e\pi_G \\
				&= \sup_{F,G} |\pi_F D(t)\pi_G|_e = \sup_{F,G} e|\pi_F D(t)\pi_G|e \quad (t\in\cal A).
			\end{split}
		\end{equation}
		Now,
		\begin{equation}\label{Fo}
			\pi_F D(t)\pi_G = D(\pi_F t\pi_G) - D(\pi_F)t\pi_G - \pi_F tD(\pi_G),
		\end{equation}
		and so, combining (\ref{T}), (\ref{Fo}) and (\ref{0}), we finally obtain that
		\[
		\begin{split}
			|eD(t)e|\leq |eD(t)e|_e &\leq
			\sup_{F,G} e|\pi_F D(\pi_F t\pi_G)\pi_G|e \\
			&\hspace{2cm} + \sup_{F,G} e|\pi_F D(\pi_F)t\pi_G|e
			+ \sup_{F,G} e|\pi_F tD(\pi_G)\pi_G|e \\
			&\leq 3e\xi a + e|D(a)|e + 3a\xi e,
		\end{split}
		\]
		whenever $|t|\leq a$, as required. As explained earlier, the regularity of $D$ follows from this. 
		
		Conversely, suppose $D$ is regular. The maps $x\mapsto p_ix$ and $x\mapsto xp_i$ $(i\in I)$ are all band projections on $(e\frak Xe,\leq_e)$. Accordingly, since $D$ is regular, for every $F\in\cal F$ one has that,
		\[
		\begin{split}
			\sum_{i\in F} |D(p_i)| &= \sum_{i\in F} |D(ep_ip_ie)| \\
			&\leq \sum_{i\in F} |D(e)p_i| + \sum_{i\in F} |eD(p_i)p_i|
			+ \sum_{i\in F} |p_iD(p_i)e| + \sum_{i\in F} |p_iD(e)| \\
			&\leq |D(e)|e + \sum_{i\in F} |eD(p_i)p_i|_e
			+ \sum_{i\in F} |p_iD(p_i)e|_e + e|D(e)| \\
			&\leq |D(e)|e + \bigvee_{i\in F} |eD(p_i)p_i|_e
			+ \bigvee_{i\in F} |p_iD(p_i)e|_e + e|D(e)| \\
			&\leq |D(e)|e + 2e|D|(e)e + e|D(e)|.
		\end{split}
		\]
	\end{proof}
	
	We should point out that the hypotheses of Theorem~\ref{LA} are satisfied by many natural examples of Riesz algebras. Simply note that if $X$ is an atomic Riesz space and $\{p_i : i\in I\}$ is the family of all principal band projections generated by atoms in $X$, then any subalgebra of $\cal L^r(X)$, which is also a Riesz subspace and contains $\{\mathrm{id}_X\}\cup \{p_i : i\in I\}$, satisfies the conditions of the theorem. 
	
	On the other hand, any quotient of the form $\cal L^r(X)/\cal A^r(X)$, where $X$ is an infinite dimensional atomic Banach lattice, provides an example of a Riesz algebra (and also, of a Riesz $\cal L^r(X)$-module) for which the hypotheses of the theorem clearly fail. We do not know whether the situation of such examples constitutes a real obstruction to the conclusion of Theorem~\ref{LA}, or is simply an indication of the limitations of our methods.  
		
	Next, we turn our attention to the central problem of the note, i.e., that of finding conditions on a Banach lattice algebra of regular operators $\cal A$ and on a Banach lattice $\cal A$-module $\frak Y$, forcing every bounded derivation from $\cal A$ to $\frak Y$ to be regular. Key to our main result in this direction will be the next theorem, very much inspired by the one we just proved, and similar in spirit to \cite[Theorem~3.2]{Bl1}. In it, we shall make use of topological assumptions to replace the collection of o-minimal idempotents of Theorem~\ref{LA}, by a suitable subcollection. Following \cite{Sc}, we shall call a non-empty subset $S$ of a Riesz space $X$, {\bf absolutely majorized}, if the set $\{|x| : x\in S\}$ is bounded above. (Of course, if $X$ is real, $S$ is absolutely majorized if and only if it is order-bounded.)   
	
	\begin{thm}\label{32like}
		Let $(\cal A,\tau)$ be a semiprime Riesz and topological algebra, let $(\frak Y,\omega)$ be an order complete Riesz and topological $\cal A$-module, and let $D:\cal A\to\frak Y$ be a sequentially continuous derivation. Suppose
		\begin{itemize}
			\item[--] $\cal A_+$ is closed and contains an a.i.~$(e_n)$ for $\cal A$;
			
			\item[--] there is a locally convex Hausdorff topology $\widetilde{\omega}$ on $\frak Y$ (in the complex case, so that $x+iy\mapsto x-iy$, $\frak Y_{\Bbb R}+i\frak Y_{\Bbb R}\to\frak Y_{\Bbb R}+i\frak Y_{\Bbb R}$, is continuous) such that $\frak Y_+$ is $\widetilde{\omega}$-closed, absolutely majorized subsets of $\frak Y$ are $\widetilde{\omega}$-precompact, for every $a\in\cal A$ the maps $y\mapsto ay$ and $y\mapsto ya$ from $\frak Y$ to $\frak Y$ are $\widetilde{\omega}$-$\widetilde{\omega}$-continuous, and $\widetilde{\omega}$ is finer than $\omega$ on the $\widetilde{\omega}$-closures of $\cal A\frak Y$ and $\frak Y\cal A$;
			
			\item[--] there is in $\cal A$ a sequence $(p_i)$ of pairwise orthogonal o-minimal idempotents such that:
			\begin{itemize}
				\item[i)] the sequence $\pi_n := \sum_{i=1}^n p_i$ $(n\in\Bbb N)$ is convergence preserving;
				
				\item[ii)] there is a factorization $e_n = u_n\pi_{k_n}v_n$ $(n\in\Bbb N)$ of $(e_n)$ through $(\pi_n)$, such that $\{D(u_n) : n\in\Bbb N\}$, $\{D(v_n) : n\in\Bbb N\}$, $\{u_ny : n\in\Bbb N\}$ and $\{yv_n : n\in\Bbb N\}$ $(y\in\frak Y)$ are $\widetilde{\omega}$-precompact; and
				
				\item[iii)] $\sup_n \sum_{i=1}^n |p_iD(p_i)\pi_n|$ and $\sup_n \sum_{i=1}^n |\pi_nD(p_i)p_i|$ exist.
			\end{itemize}
		\end{itemize}
		Then $D$ is regular.
	\end{thm}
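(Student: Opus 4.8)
The plan is to run the argument of Theorem~\ref{LA} with the exact manipulations built there on the identity $e$ replaced by limiting arguments anchored on the approximate identity $(e_n)$, its factorization through $(\pi_n)$, and the precompactness furnished by $\widetilde{\omega}$. Since $\frak Y$ is order complete, $\cal L^r(\cal A,\frak Y)$ is a Riesz space and $D$ is regular exactly when $\{D(t):|t|\le a\}$ is absolutely majorized for every $a\in\cal A_+$; so the entire task reduces to producing, for each such $a$, a single $z_a\in\frak Y_+$ with $|D(t)|\le z_a$ whenever $|t|\le a$.

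Writing $\pi_n=\sum_{i\le n}p_i$, the algebraic engine is the derivation identity together with the idempotent relations $p_iD(p_i)p_i=0$ and $\pi_nD(\pi_n)\pi_n=0$ (both immediate from $q=q^2\Rightarrow D(q)=D(q)q+qD(q)$). These, the o-minimality $p_i\cal Ap_i=\Bbb Kp_i$, and hypothesis~(iii) let me bound the corner components of the derivative of an idempotent: with $\xi_1:=\sup_n\sum_{i\le n}|\pi_nD(p_i)p_i|$ and $\xi_2:=\sup_n\sum_{i\le n}|p_iD(p_i)\pi_n|$ (which exist by (iii)), the decomposition $\pi_mD(p_i)\pi_m=\pi_mD(p_i)p_i+p_iD(p_i)\pi_m$ for $i\le m$ yields the uniform estimate $|\pi_mD(\pi_{k_n})\pi_m|\le\xi_1+\xi_2$. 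Crucially — and this is what lets me dispense with the hypothesis $\dim p_i\cal Ap_j\le1$ used in Theorem~\ref{LA} — a \emph{general} $t$ never has to be sandwiched between the $p_i$'s: in the factorization $e_n=u_n\pi_{k_n}v_n$ the only idempotent that gets differentiated is $\pi_{k_n}$, whose derivative is governed by (iii), while $t$ enters solely through the outer factors $u_n,v_n$.

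Next I would expand, for $|t|\le a$,
\[
e_nD(t)e_n=D(e_nte_n)-D(e_n)te_n-e_ntD(e_n),\qquad
D(e_n)=D(u_n)\pi_{k_n}v_n+u_nD(\pi_{k_n})v_n+u_n\pi_{k_n}D(v_n),
\]
and estimate each resulting piece. The term carrying $D(\pi_{k_n})$ is controlled by the corner bound of the previous paragraph; the pieces carrying $D(u_n),D(v_n)$ and the boundary terms $D(e_n)te_n,\,e_ntD(e_n)$ are handled by o-minimality (turning $\pi$-sandwiched scalars into scalars dominated by the corresponding pieces of $a$, exactly as in the step $|\lambda_t|\le\lambda_a$ of Theorem~\ref{LA}) together with the $\widetilde{\omega}$-precompactness of $\{D(u_n)\},\{D(v_n)\},\{u_ny\}$ and $\{yv_n\}$. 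Because absolutely majorized sets are $\widetilde{\omega}$-precompact, I may pass to a subnet along which $D(u_n),D(v_n),u_ny,yv_n$ all $\widetilde{\omega}$-converge (for the finitely many relevant $y$); since $\widetilde{\omega}$ is finer than $\omega$ on the $\widetilde{\omega}$-closures of $\cal A\frak Y$ and $\frak Y\cal A$, and the module maps are $\widetilde{\omega}$-continuous, the products above converge and the target inequality $|D(t)|\le z_a$ survives in the limit: $\frak Y_+$ is $\widetilde{\omega}$-closed and (in the complex case) the reflection $x+iy\mapsto x-iy$, hence $\mathrm{Re}$ and $\mathrm{Im}$, are $\widetilde{\omega}$-continuous, so the relations $z_a\pm\mathrm{Re}(\cdot)\ge0$ and $z_a\pm\mathrm{Im}(\cdot)\ge0$ persist.

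The main obstacle I anticipate is precisely the passage from the corner quantity $e_nD(t)e_n$ back to $D(t)$ itself, together with the uniform order bound on it. The approximate identity need not converge, so neither $D(e_n)$ nor the boundary products $D(e_n)te_n,\,e_ntD(e_n)$ stabilise on their own, and $D(e_nte_n)$ cannot be bounded naively; it is only after expanding $D(e_n)$ through the factorization and extracting, by precompactness, a \emph{single} subnet along which $D(u_n),D(v_n),u_ny,yv_n$ all converge that the expression can be reassembled into something convergent and controlled. Legitimising the recovery $e_nD(t)e_n\to D(t)$ — drawing on the convergence-preserving property of $(u_n),(v_n),(\pi_n)$, the sequential continuity of $D$, semiprimeness (to guarantee that elements are detected by their corners), and the interplay of $\tau$, $\omega$ and $\widetilde{\omega}$ — and checking that the finitely many subnet limits reconcile into one bound $z_a$ valid uniformly over $\{t:|t|\le a\}$, is the delicate heart of the argument; the corner estimates themselves are, by contrast, a routine elaboration of Theorem~\ref{LA}.
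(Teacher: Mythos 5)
Your overall skeleton does match the paper's: expand $D$ along the factorized approximate identity, apply the Leibniz rule, extract $\widetilde{\omega}$-convergent subnets from the precompact sets, and let $\widetilde{\omega}$-closedness of $\frak Y_+$ carry the order bounds through the limits. But there is a genuine gap at the central estimate, and it sits exactly at the point you advertise as an improvement: the claim that a general $t$ never has to be sandwiched between the $p_i$'s, so that the condition $\dim p_i\cal Ap_j\leq 1$ of Theorem~\ref{LA} can be dispensed with. This is false. In your expansion $e_nD(t)e_n = D(e_nte_n) - D(e_n)te_n - e_ntD(e_n)$, the boundary terms are indeed harmless for fixed $n$ (they are dominated by $|D(e_n)|ae_n$ and $e_na|D(e_n)|$), but $D(e_nte_n)$ forces you to differentiate an expression with $t$ inside: writing $e_nte_n = u_n\big(\pi_{k_n}(v_ntu_n)\pi_{k_n}\big)v_n$ and applying the Leibniz rule leaves the term $u_n\,\pi_{k_n}D(c)\pi_{k_n}\,v_n$ with $c := v_ntu_n$, $|c|\leq v_nau_n$ --- a $\pi$-corner of the derivative of a \emph{general} element. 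Hypothesis (iii) controls only the derivatives $D(p_i)$ of the idempotents; it says nothing about $\pi_mD(c)\pi_m$. The only way to bound this corner uniformly over $|c|\leq b$ is to expand $\pi_mc\pi_m = \sum_{i,j}p_icp_j$, apply the Leibniz rule again, and control the resulting terms $p_iD(p_icp_j)p_j$; for these the scalar argument $|\lambda_c|\leq\lambda_b$ is the only available tool, and it requires $\dim p_i\cal Ap_j\leq 1$ for \emph{all} pairs $i,j$, whereas o-minimality gives only the diagonal case $p_i\cal Ap_i = \Bbb Kp_i$. This off-diagonal one-dimensionality is precisely what the semiprimeness hypothesis supplies (via \cite[Lemma~3.1]{Bl1}), not the vague role of ``detecting elements by their corners'' you ascribe to it; the paper's estimate (\ref{firststep}), whose verification occupies the last third of its proof, hinges on exactly this. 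Your plan is also internally inconsistent here: you invoke ``the step $|\lambda_t|\leq\lambda_a$ of Theorem~\ref{LA}'' to handle various terms, but that step \emph{is} the application of the off-diagonal dimension condition you claim to have discarded.

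A second, lesser problem is the recovery step $e_nD(t)e_n\to D(t)$, which you yourself flag as the delicate heart but for which you propose no mechanism. It is not available as stated: $(e_n)$ is an approximate identity for multiplication inside $\cal A$, not for the module action on $\frak Y$, and $(e_n)$ need not $\tau$-converge, so separate continuity of the module products gives nothing. The paper never takes this route; instead it writes $D(a) = \lim_m\lim_n D(e_mae_n)$ using the a.i.\ property inside $\cal A$ together with the sequential continuity of $D$, and then justifies the ensuing double-limit interchanges with a dedicated tool, Lemma~\ref{aux}, on convergence-preserving sequences, arriving at the decomposition (\ref{Formula}) of $D$ into manifestly regular pieces. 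As written, then, your proposal rests on a false dispensation at its key estimate and leaves its hardest analytic step unproved.
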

	
	In proving Theorem~\ref{32like}, we shall make use of the following stronger version of the lemma contained in the proof of \cite[Theorem~3.2]{Bl1}).
	
	\begin{lem}\label{aux}
		Let $(x_n)$ and $(y_n)$ be sequences in $\cal A$ such that, for every convergent sequence $(z_n)$ in $\cal A$, $(x_nz_n)$ and $(z_ny_n)$ converge. Let $(a_{mn})$ be a double sequence in $\cal A$ such that $\lim_m \lim_n a_{mn} = a$ and suppose $\lim_m \lim_n x_ma_{mn}y_n = \eta$. Then $\eta = \lim_m \lim_n x_may_n$.
	\end{lem}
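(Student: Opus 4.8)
The plan is to reduce the statement to a single sequential-continuity property of one-sided multiplication and then reassemble the iterated limits. Write $a_m:=\lim_n a_{mn}$, so that $a=\lim_m a_m$ by hypothesis, and $\eta_m:=\lim_n x_ma_{mn}y_n$, so that $\eta=\lim_m\eta_m$. For fixed $m$ the array $x_ma y_n$ is the image of the \emph{constant} sequence $x_ma$ under right multiplication by $y_n$, so the inner limits $\zeta_m:=\lim_n x_ma y_n$ all exist, this being exactly the hypothesis on $(y_n)$ applied to a constant sequence. The whole claim thus amounts to showing that $\lim_m(\eta_m-\zeta_m)=0$.

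The one fact that does the real work — and the only place the topology genuinely enters — is that a right convergence preserving sequence carries null sequences to null sequences: if $(z_ny_n)$ converges whenever $(z_n)$ does, and $w_n\to 0$, then $w_ny_n\to 0$ (and symmetrically on the left for $(x_n)$). I expect this to be the main obstacle. I would argue by contradiction using an interleaving trick: if $w_ny_n\not\to 0$, pick an absolutely convex $0$-neighbourhood $U$ (available since $\cal A$ is locally convex) and a subsequence, thinned so as to leave infinitely many gaps, with $w_{n_k}y_{n_k}\notin U$; then the sequence equal to $w_{n_k}$ at index $n_k$ and to $0$ elsewhere is still null, so by hypothesis its product with $(y_n)$ converges, the gap entries forcing the limit to be $0$, while the entries at $n_k$ remain outside $U$ — contradicting the Hausdorff property.

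From null-preservation I would extract the two consequences actually used. First, for a convergent sequence $z_n\to z$ the limit $\lim_n z_ny_n$ depends only on $z$: subtracting the constant sequence $z$ and applying null-preservation gives $\lim_n z_ny_n=R(z)$, where $R(z):=\lim_n z y_n$, and $R$ is plainly linear. Second, running the same interleaving construction on the array $(w_my_n)$ — now using the inner convergence $w_{m_k}y_n\to R(w_{m_k})$ to select a single index $n_k$ with $w_{m_k}y_{n_k}\notin\tfrac12 U$ — shows that $R$ itself sends null sequences to null sequences, i.e.\ $w_m\to 0$ forces $\lim_m\lim_n w_my_n=\lim_m R(w_m)=0$. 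This iterated form is the version of null-preservation that survives passage to the double limit.

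With these in hand the assembly is routine. By separate continuity of the product, $x_ma_{mn}\to x_ma_m$ as $n\to\infty$ for each fixed $m$, so $\eta_m=\lim_n x_ma_{mn}y_n=R(x_ma_m)$, while $\zeta_m=R(x_ma)$ by the definition of $R$; hence $\eta_m-\zeta_m=R\big(x_m(a_m-a)\big)$ by linearity. Since $a_m-a\to 0$ and $(x_m)$ is left convergence preserving, $x_m(a_m-a)\to 0$, and the iterated null-preservation statement then gives $\lim_m(\eta_m-\zeta_m)=0$. Therefore $\eta=\lim_m\eta_m=\lim_m\zeta_m=\lim_m\lim_n x_ma y_n$, as required. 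The crux throughout is the interleaving construction establishing null-preservation; once it and its iterated version are available, the remainder is bookkeeping with separately continuous products.
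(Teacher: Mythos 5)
Your proposal is correct, and although its skeleton parallels the paper's --- both arguments pivot on the fact that convergence-preserving sequences carry null sequences to null sequences, and both finish with a diagonal subsequence extraction to tame the iterated limit --- the execution of the two key steps is genuinely different. The paper proves null-preservation by a scaling argument: assuming $x_nz_n\to z\ne 0$, it chooses $t_n\uparrow\infty$ with $\|t_nz_n\|_\mu\to 0$ for a single seminorm $\mu$ and then feeds $(t_nz_n)$ back into the hypothesis; this implicitly requires $(t_nz_n)$ to converge in the \emph{full} locally convex topology (a Mackey-convergence-type condition, automatic when countably many seminorms suffice but not in general), whereas your interleaving-with-gaps construction needs only that the interleaved sequence is null --- which is immediate --- so it works verbatim in any Hausdorff locally convex (indeed any Hausdorff topological vector space) setting, making it both more elementary and more robust. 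For the double-limit step, the paper argues by contradiction with a two-tier extraction (first a seminorm $\alpha$ and subsequence with $\inf_k\|x_{m_k}\lim_n z_{m_k}y_n\|_\alpha>0$, then a further subsequence and a diagonal choice of $(y_{n_l})$), invoking left null-preservation only in contrapositive form; you instead package the right multiplications into the linear map $R(z):=\lim_n zy_n$, first collapse $x_m(a_m-a)\to 0$ by left null-preservation, and then apply an ``iterated'' null-preservation lemma for $R$, proved by the same interleaving device (your $\tfrac12 U$ refinement is exactly what is needed to convert the inner limits into a genuine diagonal). The two assemblies establish the same identity, since $x_mR(z_m)=R(x_mz_m)$ by associativity and separate continuity, but yours isolates the reusable statement $w_m\to 0\Rightarrow\lim_m\lim_n w_my_n=0$ more cleanly, at the cost of one extra layer of bookkeeping.
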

	
	\begin{proof}
		First note that if $(z_n)$ is a sequence in $\cal A$ converging to 0, then $(x_nz_n)$ and $(z_ny_n)$ must converge to 0 as well. To see this, let $\{\|\cdot\|_i : i\in I\}$ be a system of seminorms inducing the topology on $\cal A$. Suppose $\lim_n x_nz_n = z\ne 0$ and let $\mu\in I$ be such that $\|z\|_\mu\ne 0$. Since $\lim_n \|z_n\|_\mu = 0$, there is a sequence $(t_n)$ in $\Bbb R_+$ such that $t_n\uparrow\infty$ and $\lim_n \|t_nz_n\|_\mu = 0$. But then we would have that $(t_nx_nz_n)$ converges, while on the other hand, $\lim_n |\|t_nx_nz_n\|_\mu - t_n\|z\|_\mu| = 0$, which is clearly impossible. Thus $z = 0$. The proof that $(z_ny_n)$ converges to 0 is completely analogous.
		
		Now, set $a_m := \lim_n a_{mn}$ $(m,n\in\Bbb N)$. It follows readily from the previous paragraph that $\lim_m \lim_n x_ma_{mn}y_n = \lim_m x_m \lim_n a_{mn}y_n = \lim_m x_m \lim_n a_my_n$. We will show next that $\lim_m x_m \lim_n a_my_n = \lim_m x_m \lim_n ay_n$, or equivalently, that $\lim_m x_m \lim_n (a_m-a)y_n = 0$. Set $z_n := a_n-a$ $(n\in\Bbb N)$ and suppose towards a contradiction that $\lim_m x_m \lim_n z_my_n\ne 0$. The latter means that for some $\alpha\in I$, there exists a subsequence $(x_{m_k} \lim_n z_{m_k}y_n)$ such that $\inf_k \|x_{m_k} \lim_n z_{m_k}y_n\|_\alpha>0$. By our assumptions on $(x_m)$, the sequence $(\lim_n z_{m_k}y_n)$ cannot converge to 0 either, and so, for some $\beta\in I$ there is a subsequence $(\lim_n z_{m_{k_l}}y_n)$ such that $\inf_l \|\lim_n z_{m_{k_l}}y_n\|_\beta =: \delta>0$. From this, one can inductively define a subsequence $(y_{n_l})$ of $(y_n)$ such that $\|z_{m_{k_l}}y_{n_l}\|_\beta\geq \delta/2$. On the one hand, this would mean that $(z_{m_{k_l}}y_{n_l})$ cannot converge to 0, while on the other hand, the fact that multiplication from the right by $(y_{n_l})$ preserves convergence, combined with the fact that $(z_{m_{k_l}})$ converges to 0, implies $\lim_l z_{m_{k_l}}y_{n_l} = 0$ (by the result from the previous paragraph). We thus have a contradiction, and hence, the desired conclusion.
	\end{proof}
	
	\begin{proof}[Proof of Theorem~\ref{32like}]
		Let $(u_n)$ and $(v_n)$ be convergence preserving sequences in $\cal A_+$ such that $e_n = u_n\pi_{k_n}v_n$ $(n\in\Bbb N)$ for some strictly increasing sequence $(k_n)$. Suppose, in addition, $(u_n)$ and $(v_n)$ have been chosen so that each one of the sets $\{D(u_n) : n\in\Bbb N\}$, $\{D(v_n) : n\in\Bbb N\}$, $\{u_ny : n\in\Bbb N\}$ and $\{yv_n : n\in\Bbb N\}$ $(y\in\frak Y)$ is $\widetilde{\omega}$-precompact. Set
		\[
		\Phi(a) := \lim_i \lim_j v_iau_j
		\quad\text{and}\quad
		\Psi(a) := \lim_r \lim_s \pi_{k_r}(v_rau_s)\pi_{k_s}
		\quad (a\in\cal A).
		\]
		Then
		\[
		\begin{split}
			D(a) &= D\Big(\lim_m \lim_n e_mae_n\Big) \\
			&= D\Big(\lim_m \lim_n u_m(\pi_{k_m}(v_mau_n)\pi_{k_n})v_n\Big)
			= \lim_m \lim_n D(u_m\Psi(a)v_n) \\
			&= \lim_m \lim_n \Big(D(u_m)\Psi(a)v_n +
			u_mD(\Psi(a))v_n + u_m\Psi(a)D(v_n)\Big) \\
			&= \lim_m \lim_n \Big(D(u_m)\Psi(a)v_n +
			u_m\Big(\lim_r \lim_s D(\pi_{k_r}\Phi(a)\pi_{k_s})\Big)v_n
			+ u_m\Psi(a)D(v_n)\Big)
		\end{split}
		\]
		(the third and last equalities by Lemma~\ref{aux}). 
		
		Let $\widetilde{x}$ and $\widetilde{y}$ be $\widetilde{\omega}$-limit points of $\{D(u_m) : m\in\Bbb N\}$ and $\{D(v_n) : n\in\Bbb N\}$, respectively, and let $(u_{m_\alpha})$ and $(v_{n_\beta})$ be subnets such that $\widetilde{\omega}$-$\lim_\alpha D(u_{m_\alpha}) = \widetilde{x}$ and $\widetilde{\omega}$-$\lim_\beta D(v_{n_\beta}) = \widetilde{y}$. Then
		\begin{equation}\label{summand1}
			\lim_\alpha \lim_n D(u_{m_\alpha})\Psi(a)v_{n}
			= \lim_\alpha D(u_{m_\alpha}) \lim_n \Psi(a)v_{n} = \widetilde{x} \lim_n \Psi(a)v_{n},
		\end{equation}
		(the second equality, because $y\mapsto y(\lim_n \Psi(a)v_{n})$, $\frak Y\to\frak Y$, is $\widetilde{\omega}$-$\widetilde{\omega}$-continuous -- by hypothesis) and similarly,
		\begin{equation}\label{summand2}
			\lim_m \lim_\beta u_{m}\Psi(a)D(v_{n_\beta})
			= \Big(\lim_m u_{m}\Psi(a)\Big) \widetilde{y}.
		\end{equation}
		
		We can further assume, by passing to subnets if needed, that $\lim_\alpha u_{m_\alpha}y$ and $\lim_\beta yv_{n_\beta}$ exist for every $(y\in\frak Y)$ (simply recall that $\{u_my : n\in\Bbb N\}$ and $\{yv_n : n\in\Bbb N\}$ are $\widetilde{\omega}$-precompact $(y\in\frak Y)$; a standard application of Tychonoff Theorem yields the existence of subnets of $(u_n)$ and $(v_n)$, as required). Let
		\[
		\Theta:\cal A\to\frak Y,\;
		t\mapsto \lim_\alpha \lim_\beta
		u_{m_\alpha}\Big(\lim_r \lim_s D(\pi_{k_r}t\pi_{k_s})\Big)v_{n_\beta}.
		\]
		Clearly, $\Theta$ is well-defined. We will show it is regular. To this end, let
		\[
		\xi := \sup_n \sum_{i=1}^n |\pi_nD(p_i)p_i|
		\quad\text{and}\quad
		\eta := \sup_n \sum_{i=1}^n |p_iD(p_i)\pi_n|.
		\]
		Let $a\in\cal A_+$ and let $t\in\cal A$ be such that $|t|\leq a$. Then, given $m,n,r,s\in\Bbb N$, with $r\geq m$ and $s\geq n$, one has that
		\begin{equation}\label{firststep}
			\begin{split}
				|u_m D(\pi_{k_r}t\pi_{k_s})v_n|
				&\leq u_m(\xi+\eta)av_n + u_m|D(Q(a))|v_n + u_ma(\xi+\eta)v_n.
			\end{split}
		\end{equation}
		Verification of (\ref{firststep}) is easy but is not short, so we shall postpone it until the end of the proof and proceed with the rest of the argument.
		
		Fix $m,n\in\Bbb N$, and to simplify notations, set $y_{rs} := D(\pi_{k_r}t\pi_{k_s})$ $(r,s\in\Bbb N)$ and
		\[
		\xi_{mn}(a) := u_m(\xi+\eta)av_n + u_m|D(Q(a))|v_n + u_ma(\xi+\eta)v_n.
		\]
		It follows from (\ref{firststep}) that each set $\{u_my_{rs}v_n : s\in\Bbb N\}$ $(m,n,r\in\Bbb N,\, r\geq m)$ is absolutely majorized, and so it has a compact $\widetilde{\omega}$-closure on $\frak Y$, which we shall denote $K_{mnr}$. If $K := \prod_{m,n,r\in\Bbb N,\,r\geq m} K_{mnr}$ with the product topology, then $K$ is compact and the sequence $y_s := (u_m y_{rs}v_n)_{m,n,r\in\Bbb N,\, r\geq m}$ $(s\in\Bbb N)$ must contain a subnet $(y_{s_\gamma})$ such that $(u_m y_{rs_\gamma}v_n)_\gamma$ $\widetilde{\omega}$-converges for every triple $m,n,r\in\Bbb N$ with $r\geq m$. For each pair $m,n\in\Bbb N$, let $\widetilde{y}_{mnr} := \widetilde{\omega}$-$\lim_\gamma u_m y_{rs_\gamma}v_n$ if $r\geq m$, and let $\widetilde{y}_{mnr} := 0\in\frak Y$ if $r<m$. As $\frak Y_+$ is $\widetilde{\omega}$-closed, it follows from (\ref{firststep}) that $|\widetilde{y}_{mnr}|\leq \xi_{mn}(a)$ $(m,n,r\in\Bbb N)$. Thus, $\{\widetilde{y}_{mnr} : r\in\Bbb N\}$ is absolutely majorized $(m,n\in\Bbb N)$, and so, it has a compact $\widetilde{\omega}$-closure, $K_{mn}$ say. The same argument as before, this time applied to the sequence $z_r := (\widetilde{y}_{mnr})_{m,n}$ $(r\in\Bbb N)$ as a subset of $\prod_{m,n} K_{mn}$ with the product topology, yields a subnet $(z_{r_\delta})$ such that, for every pair $m,n\in\Bbb N$, $(\widetilde{y}_{mnr_\delta})_\delta$ $\widetilde{\omega}$-converges to some $\widetilde{y}_{mn}\in K_{mn}$. One then has that $y_{mn} = \widetilde{\omega}\text{-}\lim_\delta \widetilde{y}_{mnr_\delta} = \widetilde{\omega}\text{-}\lim_\delta \widetilde{\omega}\text{-}\lim_\gamma u_my_{r_\delta s_\gamma}v_n$ $(m,n\in\Bbb N)$. Since $\lim_r \lim_s u_m y_{rs}v_n$ exists $(m,n\in\Bbb N)$ and $\widetilde{\omega}$ is finer than $\omega$ on the $\widetilde{\omega}$-closure of $\cal A\frak Y\cal A$,
		\begin{equation}\label{firstfact}
			\begin{split}
				\lim_r \lim_s u_m y_{rs}v_n &= \lim_\delta \lim_\gamma u_m y_{r_\delta s_\gamma}v_n = \widetilde{\omega}\text{-}\lim_\delta \widetilde{\omega}\text{-}\lim_\gamma u_m y_{r_\delta s_\gamma}v_n
				\quad (m,n\in\Bbb N).
			\end{split}
		\end{equation}
		Also
		\begin{equation}\label{secondfact}
			\lim_\alpha \lim_\beta u_{m_\alpha}yv_{n_\beta} = \widetilde{\omega}\text{-}\lim_\alpha \widetilde{\omega}\text{-}\lim_\beta u_{m_\alpha}yv_{n_\beta}\quad (y\in\frak Y)
		\end{equation}
		(the limit on the right exists by our choice of $(u_{m_\alpha})$ and $(v_{n_\beta})$, while the equality is again consequence of the fact that $\widetilde{\omega}$ is finer than $\omega$ on the $\widetilde{\omega}$-closure of $\cal A\frak Y\cal A$). Combining (\ref{firstfact}) and (\ref{secondfact}), we obtain that
		\[
		\begin{split}
			\Theta(t) &= \lim_\alpha \lim_\beta u_{m_\alpha}\Big(\lim_r \lim_s y_{rs}\Big)v_{n_\beta}
			= \widetilde{\omega}\text{-}\lim_\alpha \widetilde{\omega}\text{-}\lim_\beta u_{m_\alpha}\Big(\lim_r \lim_s y_{rs}\Big)v_{n_\beta} \\
			&= \widetilde{\omega}\text{-}\lim_\alpha \widetilde{\omega}\text{-}\lim_\beta \Big(\lim_r \lim_s u_{m_\alpha}y_{rs}v_{n_\beta}\Big)
			= \widetilde{\omega}\text{-}\lim_\alpha \widetilde{\omega}\text{-}\lim_\beta
			\Big(\widetilde{\omega}\text{-}\lim_\delta
			\widetilde{\omega}\text{-}\lim_\gamma u_{m_\alpha}y_{r_\delta s_\gamma}v_{n_\beta}\Big).
		\end{split}
		\]
		By (\ref{firststep}) and the closedness of $\frak Y_+$, $|\widetilde{\omega}\text{-}\lim_\delta \widetilde{\omega}\text{-}\lim_\gamma u_my_{r_\delta s_\gamma}v_n|\leq \xi_{m,n}(a)$ $(m,n\in\Bbb N)$, and so,
		\[
		|\Theta(t)|\leq \widetilde{\omega}\text{-}\lim_\alpha \widetilde{\omega}\text{-}\lim_\beta \xi_{m_\alpha n_\beta}(a),
		\]
		where the double limit exists by our choice of $(u_{m_\alpha})$ and $(v_{n_\beta})$. Since $a\in\cal A_+$ was arbitrary and $\frak Y$ is Dedekind complete, $\Theta$ is regular.
		
		Lastly, combining the expression for $D(a)$ from the beginning of the proof with (\ref{summand1}), (\ref{summand2}) and the definition of $\Theta$, one obtains that
		\begin{equation}\label{Formula}
			D(a) = \widetilde{x}\lim_n \Psi(a)v_n + (\Theta\circ\Phi)(a)
			+ \Big(\lim_m u_m\Psi(a)\Big)\widetilde{y} \quad (a\in\cal A).
		\end{equation}
		Clearly, $\Phi$ and $\Psi$ are positive. Furthermore, since $\cal A_+$ is closed, the maps $a\mapsto\lim_m u_ma$, $\cal A\to\cal A$, and $a\mapsto\lim_n av_n$, $\cal A\to\cal A$, are positive too. Taking into account these facts, together with the regularity of $\Theta$, one concludes that $D$ is regular.
		
		To finish, it only remains to prove (\ref{firststep}).
		
		Since $\cal A$ is semiprime, $\dim{p_i\cal Ap_j}\leq 1$ $(i,j\in\Bbb N)$ (see \cite[Lemma~3.1]{Bl1}), and the same argument as in the proof of Proposition~\ref{LA}, yields that if $|t|\leq a\in\cal A_+$ then $|p_iD(p_itp_j)p_j|\leq |p_iD(p_iap_j)p_j|$. Thus, if $|t|\leq a\in\cal A_+$ and $Q(a) := \lim_k \lim_l \pi_ka\pi_l$,
		\begin{equation}\label{ineq1}
			\begin{split}
				|p_iD(p_itp_j)p_j|&\leq |p_iD(p_iap_j)p_j| = |p_iD(p_iQ(a)p_j)p_j| \\
				&\leq |p_iD(p_i)Q(a)p_j| + |p_iD(Q(a))p_j| + |p_iQ(a)D(p_j)p_j|.
			\end{split}
		\end{equation}
		Note that $p_iD(p_i)Q(a)p_j = \lim_k p_iD(p_i)\pi_kap_j$ and $p_iQ(a)D(p_j)p_j = \lim_l p_ia\pi_lD(p_j)p_j$ $(a\in\cal A)$. Let $(\varepsilon_{ij})_{1\leq i\leq m,\, 1\leq j\leq n}$ be an arbitrary choice of scalars such that $|\varepsilon_{ij}| = 1$ $(1\leq i\leq m,\,1\leq j\leq n)$.
		Then, for $k\geq m$,
		\[
		\begin{split}
			\Bigg|\sum_{i=1}^m \sum_{j=1}^n \varepsilon_{ij} p_iD(p_i)\pi_kap_j\Bigg|
			&\leq \sum_{i=1}^m \sum_{j=1}^n |p_iD(p_i)\pi_k|ap_j \\
			&= \Big(\sum_{i=1}^m |p_iD(p_i)\pi_k|\Big)a\pi_n\leq \eta a\pi_n.
		\end{split}
		\]
		Let $\zeta_k := \sum_{i=1}^m \sum_{j=1}^n \varepsilon_{ij} p_iD(p_i)\pi_kap_j$ $(k\in\Bbb N)$. By the above, $(\zeta_k)$ is absolutely majorized, and so, it must contain a $\widetilde{\omega}$-convergent subnet, $(\zeta_{k_\alpha})$ say. Let $\widetilde{\zeta} := \widetilde{\omega}$-$\lim_\alpha \zeta_{k_\alpha}$. Note $\widetilde{\zeta}\in\frak Y\cal A$, for $\zeta_{k_\alpha} = \zeta_{k_\alpha}\pi_n$ for every $\alpha$, and $\widetilde{\omega}$-$\lim_\alpha \zeta_{k_\alpha}\pi_n = \widetilde{\zeta}\pi_n$. Since $\widetilde{\omega}$ is finer than $\omega$ on $\frak Y\cal A$ and $\frak Y_+$ is $\widetilde{\omega}$-closed, letting $k\to\infty$ on $\big|\sum_{i=1}^m \sum_{j=1}^n \varepsilon_{ij} p_iD(p_i)\pi_kap_j\big|\leq \eta a\pi_n$, one finds that
		\[
		\begin{split}
			\sum_{i=1}^m \sum_{j=1}^n \varepsilon_{ij} p_iD(p_i)Q(a)p_j
			&= \sum_{i=1}^m \sum_{j=1}^n \varepsilon_{ij} \;\omega\text{-}\lim_k p_iD(p_i)\pi_kap_j \\
			&= \omega\text{-}\lim_k \zeta_k = \widetilde{\omega}\text{-}\lim_\alpha \zeta_{k_\alpha}
			\leq \eta a\pi_n.
		\end{split}
		\]
		This last holds for any choice of scalars $(\varepsilon_{ij})$ of modulus 1, so we must have 
		\begin{equation}\label{est1}
			\sum_{i=1}^m \sum_{j=1}^n |p_iD(p_i)Q(a)p_j|\leq
			\eta a\pi_n
		\end{equation}
		(here we have used the fact that for any set of vectors $x_1,\ldots,x_n$ in a Riesz space $X$, $\sum_{i=1}^n |x_i| = \sup\{|\sum_{i=1}^n \lambda_i x_i| : |\lambda_i| = 1 \; (1\leq i\leq n)\}$, which can be easily obtained by induction from the known identity $|x|+|y| = \sup_{|\lambda| = |\mu| = 1} |\lambda x + \mu y|$ $(x,y\in X)$). Similarly, one shows that
		\begin{equation}\label{est2}
			\sum_{i=1}^m \sum_{j=1}^n |p_iQ(a)D(p_j)p_j|\leq
			\pi_m a\xi.
		\end{equation}
			
		Next note that for all $t\in\cal A$ and $m,n\in\Bbb N$,
		\[
		\begin{split}\label{id1}
			|\pi_mD(&\pi_mt\pi_n)\pi_n| \\
			&\leq \Bigg|\sum_{i=1}^m \pi_mD(p_i)p_i\Bigg||t|\pi_n +
			\sum_{i=1}^m \sum_{j=1}^n |p_iD(p_itp_j)p_j| +
			\pi_m|t|\Bigg|\sum_{j=1}^n p_jD(p_j)\pi_n\Bigg|
		\end{split}
		\]
		(this follows readily from the identity in the second paragraph of the proof of Proposition~\ref{LA}, with $F = \{1,\ldots,m\}$ and $G = \{1,\ldots,n\}$). Combining this estimate with (\ref{ineq1}), (\ref{est1}) and (\ref{est2}), one obtains that, for every $t\in\cal A$ with $|t|\leq a$,
		\[
		\begin{split}
			|\pi_mD(&\pi_mt\pi_n)\pi_n| \\
			&\leq \sum_{i=1}^m |\pi_mD(p_i)p_i|a\pi_n
			+ \bigg(\sum_{i=1}^m \sum_{j=1}^n |p_iD(p_i)Q(a)p_j|
			+ \sum_{i=1}^m \sum_{j=1}^n |p_iD(Q(a))p_j| \\
			&\hspace{2cm} + \sum_{i=1}^m \sum_{j=1}^n |p_iQ(a)D(p_j)p_j|\bigg)
			+ \pi_ma\sum_{j=1}^n |p_jD(p_j)\pi_n| \\
			&\leq (\xi+\eta)a\pi_n + \pi_m |D(Q(a))|\pi_n + \pi_m a(\xi+\eta).
		\end{split}
		\]
		If $m,n,r,s\in\Bbb N$ are such that $r\geq m$ and $s\geq n$, then $u_m\pi_{k_r} = u_m$ and $\pi_{k_s}v_n = v_n$, so it follows from the last inequality that
		\[
		\begin{split}
			|u_mD(\pi_{k_r}t\pi_{k_s})v_n| &= |u_m\pi_{k_r}D(\pi_{k_r}t\pi_{k_s})\pi_{k_s}v_n|\leq u_m|\pi_{k_r}D(\pi_{k_r}t\pi_{k_s})\pi_{k_s}|v_n \\
			&\leq u_m(\xi+\eta)av_n + u_m|D(Q(a))|v_n + u_ma(\xi+\eta)v_n.
		\end{split}
		\]
		This establishes (\ref{firststep}) and completes the proof of the theorem.
	\end{proof}
	
	We are now in a position to establish the main result of this section -- Theorem~\ref{atomicmain} below. The theorem will give conditions on a Banach lattice algebra of regular operators $\cal A$ and on a Banach lattice module $\frak Y$ over that algebra, under which every bounded derivation from $\cal A$ to $\frak Y$ is regular. Before we can state the theorem, though, some more terminology needs to be introduced.  
	
	Given a Banach algebra $\cal A$ and a Banach $\cal A$-module $\frak X$, it is well-known that its Banach dual $\frak X'$, endowed with module products, defined by  
	\[
	(ax')(x) := x'(xa) \quad\text{ and }\quad (x'a)(x) := x'(ax) \qquad (a\in\cal A,\,x\in\frak X,\,x'\in\frak X'), 
	\]
	becomes a Banach $\cal A$-module. (One such module is usually called a dual Banach module.) If $\cal A$ is a Banach lattice algebra and $\frak X$ is a Banach lattice $\cal A$-module, then $\frak X'$ with its usual order and the above module products, is readily seen to be a Banach lattice $\cal A$-module, in the sense of the note. In what follows, given a Banach lattice algebra $\cal A$ and a Banach lattice $\cal A$-module $\frak X$, the Banach dual $\frak X'$ will be always thought of as a Banach lattice $\cal A$-module in this way. 
	
	As customary, we shall say that sequences $(x_i)$ and $(y_i)$ (resp.~$(x_i)_{i=1}^n$ and $(y_i)_{i=1}^n$) of a Banach lattice $X$ are equivalent if there is a constant $L>0$ so that
	\[
	L^{-1}\Big\|\sum\nolimits_i \alpha_i x_i\Big\|\leq \Big\|\sum\nolimits_i \alpha_i y_i\Big\|\leq L\Big\|\sum\nolimits_i \alpha_i x_i\Big\|,
	\]
	for every $(\alpha_i)\in c_{00}$ (resp.~every $n$-tuple of scalars $(\alpha_i)_{i=1}^n$). We shall indicate this by writing $(x_i)\overset{L}{\sim} (y_i)$ or simply $(x_i)\sim (y_i)$ (resp.~$(x_i)_{i=1}^n\overset{L}{\sim} (y_i)_{i=1}^n$). 
		
	Following \cite{Bl1}, we shall say a separable purely atomic Banach lattice $X$ satisfies $(\star)$ if \vspace{1mm}
	
	\begin{itemize}
		\item[] \emph{for some $\mu\geq 0$ there is a sequence $(\mu_n)\subset [1,\mu]$ and a sequential arrangement of its normalized atoms, $(x_i)$ say, such that, for every $n\in\Bbb N$, if $l_1<l_2<\cdots<l_n$ are such that $(x_i)_{i=1}^n\overset{\mu_n}{\sim} (x_{l_i})_{i=1}^n$ then for every $k\in\Bbb N$ there exists $l_{n+1}\geq l_n+k$ such that $(x_i)_{i=1}^{n+1}\overset{\mu_{n+1}}{\sim} (x_{l_i})_{i=1}^{n+1}$.}
	\end{itemize}
	
	\vspace{1mm}
	The class of Banach lattices that satisfy $(\star)$ contains not only Banach lattices whose normalized atoms form subsymmetric bases, but also any direct sum $(\bigoplus_i X_i)_{\mathbf{e}}$, with $\mathbf{e} = (e_i)_{i=1}^{\frak m}$ ($\frak m\in\Bbb N\cup \{\infty\}$) a 1-unconditional basis for some Banach space $E$, and $(X_i)_{i=1}^{\frak m}$ either a sequence of Banach lattices satisfying $(\star)$ with $\mu = 1$, or a sequence of Banach lattices whose normalized atoms form subsymmetric bases with subsymmetric-constants no greater than some constant $\mu$ (here, as customary, $(\bigoplus_i X_i)_{\mathbf{e}}$ stands for the space $\{(x_i)\in\prod_i X_i : \sum_i \|x_i\|e_i\in E\}$, endowed with the norm $\|(x_i)\| := \|\sum_i \|x_i\|e_i\|_E$ $((x_i)\in(\bigoplus_i X_i)_{\mathbf{e}})$ and the order induced by the cone $(\bigoplus_i X_i)_{\mathbf{e}}\bigcap (\prod_i X_i^+)$). 
	
	Given a sum $(\bigoplus_n X_n)_\mathbf{e}$, in which every $X_n$ satisfies $(\star)$ for the same constant $\mu$, one can define a Schauder basis $(x_i)$ for it as follows. Let $I$ be the index set of the elements in the basis~$\mathbf{e}$. First, for each $X_n$ $(n\in I)$ choose a basis $(x_{n,i})$ satisfying $(\star)$ with constant $\mu$; next, choose a bijective map $\phi:I\times\Bbb N\to\Bbb N$, order preserving on every subset of the form $\{n\}\times\Bbb N$ $(n\in I)$; and then define $(x_i)$ by $x_i := x_{\phi^{-1}(i)}$ $(i\in\Bbb N)$. To simplify the statements of our results, we shall refer to one such basis as a {\bf basis for $(\bigoplus_n X_n)_\mathbf{e}$, satisfying $(\star)$ with constant $\mu$ in each summand}. 
	
	Given $\xi\in X$ and $\lambda\in X'$, we write $\lambda\otimes\xi$ for the rank-one operator $x\mapsto \lambda(x)\xi$, $X\to X$. Lastly, recall that if $X$ and $Y$ are Riesz spaces, $Y$ is order complete and $\cal F\subseteq\cal L^r(X,Y)$ is an upward directed family such that $\{T(x) : T\in\cal F\}$ is bounded above for every $x\in X_+$, then $\sup{\cal F}$ exists and satisfies $(\sup{\cal F})(x) = \sup\{T(x) : T\in\cal F\}$ $(x\in X_+)$ (see for instance \cite[Theorem~1.19]{AB}).
	
	\begin{thm}\label{atomicmain}
		Let $(X_i)_{i=1}^{\frak m}$ be a sequence of separable atomic Banach lattices satisfying $(\star)$ for the same constant $\mu$, let $\mathbf{e}$ be a 1-unconditional basis with $|\mathbf{e}| = \frak m$ and let $X := (\bigoplus_i X_i)_{\mathbf{e}}$. Let $\cal A$ be a closed Riesz algebra ideal of $\cal L^r(X)$ such that $\cal A\cal A = \cal A$, and let $\frak X$ be a Banach lattice $\cal A$-module. Let $(x_i)$ be a basis for $X$ satisfying $(\star)$ with constant $\mu$ in each summand, let $(x_i^*)$ be the corresponding sequence of biorthogonal functionals, let $q_i := x_i^*\otimes x_i$ $(i\in\Bbb N)$ and let $e_n := \sum_{i=1}^n q_i$ $(n\in\Bbb N)$. Suppose 
		\vspace{1mm}
		\begin{itemize}
			\item[--] $(x_i)$ is shrinking; 
			
			\item[--] the linear maps defined by $Ex' := \sup_n e_nx'$ and $Fx' := \sup_n x'e_n$ $(x'\in\frak X'_+)$ have weak*-closed ranges;
			
			\item[--] $\sup_n e_nax' = ax'$ and $\sup_n x'ae_n = x'a$ $(a\in\cal A_+,\, x'\in\frak X'_+)$; 
			
			\item[--] at least one of the following holds:
			\begin{itemize}
				\item[(i)] for each $i\in\Bbb N$, if $(\xi_n)\subset\frak X'$ is such that either $\xi_n\in q_{n}\frak X'q_{i}$ $(n\in\Bbb N)$ or $\xi_n\in q_{i}\frak X'q_{n}$ $(n\in\Bbb N)$, then $(\xi_n)$ has no subsequence equivalent to the unit vector basis of $c_0$; or
								
				\item[(ii)] $X$ is reflexive and $\dim{q_{i}\frak X'q_{j}}<\infty$ for some (hence, for every) pair $i,j\in\Bbb N$; 
				
				\item[(iii)] every bounded derivation from $\cal A^r(X)$ to $\frak X'$ is inner, i.e., of the form $a\mapsto ax'-x'a$, $\cal A^r(X)\to\frak X'$, for some $x'\in\frak X'$ fixed. \vspace{1mm}
			\end{itemize}
		\end{itemize}
		Then every bounded derivation $D:\cal A\to\frak X'$ is regular.
	\end{thm}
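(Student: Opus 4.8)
The plan is to derive Theorem~\ref{atomicmain} from Theorem~\ref{32like} via the canonical choices $\frak Y:=\frak X'$, $p_i:=q_i$ (so that $\pi_n=e_n$), and the net $(e_n)$ as approximate identity, and then to check the hypotheses of the latter. For the topologies I would take $\omega=\widetilde\omega:=\sigma(\frak X',\frak X)$ on $\frak X'$ and let $\tau$ on $\cal A$ be the coarsest topology rendering each action $a\mapsto ax'$, $a\mapsto x'a$ into $(\frak X',\sigma(\frak X',\frak X))$ continuous. Several hypotheses are then immediate: $\frak X'$ is a dual Banach lattice, hence order complete, with weak*-closed positive cone; its order intervals are weak*-compact by Banach--Alaoglu, so absolutely majorized sets are $\widetilde\omega$-precompact; the dual actions $y\mapsto ay$, $y\mapsto ya$ are weak*-continuous, being adjoints of $x\mapsto xa$, $x\mapsto ax$ on $\frak X$; and since $\omega=\widetilde\omega$ the finer-than condition is vacuous. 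The hypotheses $\sup_n e_nax'=ax'$ and $\sup_n x'ae_n=x'a$ say precisely that $(e_n)$ is a $\tau$-approximate identity, while semiprimality of $\cal A$ together with $q_i\cal Aq_i\subseteq q_i\cal L^r(X)q_i=\Bbb Kq_i$ and $\dim q_i\cal Aq_j\le1$ (both immediate from $q_i=x_i^*\otimes x_i$) supply the idempotent-algebra hypotheses.

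With $p_i=q_i$ the factorization in (ii) of Theorem~\ref{32like} is trivial: take $u_n=v_n=e_n$ and $k_n=n$. That $(\pi_n)=(e_n)$ is convergence preserving, and that $\{D(e_n)\}$, $\{e_ny\}$ and $\{ye_n\}$ are $\widetilde\omega$-precompact, follows from the uniform bound $\sup_n\|e_n\|<\infty$ --- guaranteed by $(\star)$ with constant $\mu$ in each summand together with the $1$-unconditionality of $\mathbf e$ --- and from boundedness of $D$, since norm-bounded subsets of a dual Banach lattice are weak*-precompact. The shrinking hypothesis is what makes $(e_n)$ a genuine \emph{two-sided} approximate identity (it forces $e_n^*\to\mathrm{id}$ on $X'$) and legitimises the right-hand limits appearing in the proof of Theorem~\ref{32like}; the weak*-closed-range hypotheses on $E$ and $F$ ensure that the positive maps $\Phi,\Psi$ built there land in $\frak X'$ and that the suprema defining $\sup_n e_nx'$, $\sup_n x'e_n$ are attained.

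The whole weight of the argument therefore rests on condition (iii) of Theorem~\ref{32like}: the existence in $\frak X'$ of $\sup_n\sum_{i=1}^n|q_iD(q_i)e_n|$ and $\sup_n\sum_{i=1}^n|e_nD(q_i)q_i|$. Here I would first extract the matrix structure forced by the Leibniz rule: from $q_i=q_i^2$ one gets $q_iD(q_i)q_i=0$ and, for $j\ne i\ne k$, $q_jD(q_i)q_k=0$, so $D(q_i)$ is supported on its $i$-th row and $i$-th column with vanishing corner; moreover $D(q_iq_k)=0$ yields the useful relations $q_iD(q_i)q_k=D(q_i)q_k=-q_iD(q_k)$ for $k\ne i$. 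Working in the lattice subspace of $\frak X'$ cut out by the projections $E,F$ (exactly as $e\frak Xe$ is used in the proof of Theorem~\ref{LA}), each $q_i$ acts as a band projection on both sides, so the entries $z_{ik}:=q_iD(q_i)q_k$ are pairwise disjoint and $\sum_{i=1}^n|q_iD(q_i)e_n|=\sum_{i,k\le n}|z_{ik}|$ is an \emph{increasing} sequence of disjoint sums. Because $\sum_{i\le n}q_iD(q_i)e_n=D(e_n)e_n-\sum_{i\le n}D(q_i)q_i$ (and symmetrically for the other sum), boundedness of $D$ and $\sup_n\|e_n\|<\infty$ give $\sup_n\big\|\sum_{i,k\le n}|z_{ik}|\big\|<\infty$; thus the problem reduces to upgrading this norm-boundedness to order-boundedness, and it is here that the three alternatives are invoked.

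Under (iii) the matter is quickest: if $D=\mathrm{ad}_{x'}$ then $z_{ik}=q_ix'q_k$ ($k\ne i$), so $\sum_{i,k\le n}|z_{ik}|\le e_n|x'|e_n\le|x'|$ and both suprema exist; since such a $D$ is patently regular (a combination of the positive maps $a\mapsto ax'_\pm$, $a\mapsto x'_\pm a$), in this case one gets regularity at once, after extending innerness from $\cal A^r(X)\subseteq\cal A$ to $\cal A$ by noting that $D-\mathrm{ad}_{x'}$ is a derivation annihilating the ideal $\cal A^r(X)$ and killing it with $(\cdot)e_n$. Under (ii), reflexivity of $X$ precludes any copy of $c_0$ inside the finite-dimensional cells $q_i\frak X'q_j$, so hypothesis (i) holds automatically. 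The real obstacle is case (i): I must pass from the \emph{single-index} no-$c_0$ condition to order-boundedness of the \emph{doubly-indexed} disjoint family $(z_{ik})$. The tool is the standard fact that a norm-bounded increasing sequence of disjoint sums in a Banach lattice is order-bounded unless the generated band contains $c_0$; the delicate point --- and the crux of the whole proof --- is that a putative $c_0$-copy could a priori run along a diagonal $(z_{i_\ell k_\ell})$ with both indices tending to infinity, which (i) does not directly forbid. I would resolve this with a gliding-hump extraction that, using precisely the relations $z_{ik}=D(q_i)q_k=-q_iD(q_k)$ to re-express each diagonal entry as a member of a single column $q_\bullet\frak X'q_{k}$ or a single row $q_i\frak X'q_\bullet$, produces from any such copy a $c_0$-subsequence lying in one fixed row or column, contradicting (i). Once the two suprema are secured, Theorem~\ref{32like} applies verbatim and delivers the regularity of $D$.
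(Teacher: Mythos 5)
Your overall skeleton --- feed $\frak Y=\frak X'$ and $\widetilde\omega=$ weak* into Theorem~\ref{32like} --- is also the paper's, but your choices of $\tau$ and $\omega$ break that theorem's hypotheses. Setting $\omega=\widetilde\omega=\sigma(\frak X',\frak X)$ does make the ``finer-than'' clause vacuous, but it transfers all the difficulty to the requirement that $D:(\cal A,\tau)\to(\frak X',\omega)$ be sequentially continuous, and with your $\tau$ (the initial topology of the module actions) there is no mechanism at all forcing $D(a_n)\to D(a)$ weak* from $a_nx'\to ax'$, $x'a_n\to x'a$ weak*. Moreover your $\tau$ need not be Hausdorff when the action has a kernel; $\cal A_+$ need not be $\tau$-closed (positivity of $a$ as an operator on $X$ is invisible to its action on $\frak X'$); and the hypotheses $\sup_n e_nax'=ax'$, $\sup_n x'ae_n=x'a$ do \emph{not} make $(e_n)$ a two-sided $\tau$-a.i.: they control $(e_na)x'=e_n(ax')$ and $x'(ae_n)=(x'a)e_n$, but not the mixed terms $x'(e_na)=(x'e_n)a$ and $(ae_n)x'=a(e_nx')$, which $\tau$-convergence also requires. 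This is exactly why the paper takes $\tau$ to be the strict topology $\tau_b(a)=\|ab\|+\|ba\|$ $(b\in\cal F(X))$ and manufactures a bespoke, much coarser $\omega$ out of $E$, $F$ and the annihilators of their ranges: continuity of $D$ is then proved by pushing the $b$'s inside $D$ via the Leibniz rule, and it is there --- not in making ``$\Phi,\Psi$ land in $\frak X'$'' (those maps go from $\cal A$ to $\cal A$) --- that $\cal A\cal A=\cal A$, the inclusion $D(\cal A)\subseteq E(\frak X')+F(\frak X')$, and the weak*-closedness of the ranges of $E,F$ are actually consumed.

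The second, deeper gap is the core estimate, hypothesis iii) of Theorem~\ref{32like}. Your norm bound for $\sum_{i,k\le n}|z_{ik}|$ rests on the identity $\sum_{i\le n}q_iD(q_i)e_n=D(e_n)e_n-\sum_{i\le n}D(q_i)q_i$, which merely trades one uncontrolled $n$-term sum for another: nothing in your argument bounds $\|\sum_{i\le n}D(q_i)q_i\|$ (termwise estimates give a bound of order $n$), so the claim is circular as written. The ensuing logic is also misdirected: $\frak X'$ is a dual Banach lattice and so has the Levi property, so if your increasing disjoint sums really were norm-bounded you would be finished on the spot --- no $c_0$-exclusion, hence no use of (i) or (ii), would be needed; conversely, the ``diagonal'' obstruction you correctly flag cannot be removed by your sketch, since $z_{i_\ell k_\ell}=-q_{i_\ell}D(q_{k_\ell})$ still has both indices varying, so it never lands in one fixed row $q_i\frak X'q_\bullet$ or one fixed column $q_\bullet\frak X'q_j$, which is all that (i) excludes. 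The paper's mechanism is different and is precisely what your plan lacks: condition (i) (or (ii), for which the paper gives a separate $\ell_1$-based argument --- your claim that (ii) implies (i) is false, because (i) concerns sequences running through the \emph{varying} cells $q_n\frak X'q_i$, not sequences inside one finite-dimensional cell) is used to show that the rows and columns of the matrix $(\|q_i|q_iD(q_i)q_j|q_j\|)_{i,j}$ vanish at infinity; one then extracts a sparse subsequence $(k_i)$ with $\sum_{i,j}\|\xi'_{k_ik_j}\|<\infty$ \emph{and} $(x_{k_i})\sim(x_i)$ --- the latter is exactly what the $(\star)$ hypothesis is for --- and takes $p_i:=q_{k_i}$, so that the suprema exist by summability plus the Levi property, while the nontrivial factorization $e_n=u_n\pi_{k_n}v_n$ with $u_n=\sum_{i\le n}x_{k_i}^*\otimes x_i$, $v_n=\sum_{i\le n}x_i^*\otimes x_{k_i}$ replaces your trivial one. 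In your proposal $(\star)$ does no work at all ($\sup_n\|e_n\|<\infty$ is just the basis constant), a clear symptom of the missing step. Finally, in case (iii) your extension of innerness from $\cal A^r(X)$ to all of $\cal A$ needs ``$ye_n=0$ for all $n$ implies $y=0$'', which fails in general (the paper's remark exhibits a module where every $e_n$ acts as zero on $\frak X'$); the paper avoids this entirely by using innerness only to compute the $D(q_i)$ and then running $D$ itself through Theorem~\ref{32like}.
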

	
	\begin{rem}
		That $\dim{q_i\frak X'q_j}$ is independent of $i,j\in\Bbb N$ follows on noting that each map $\xi_{ij}'\mapsto (x_i^*\otimes x_k)\xi_{ij}'(x_l^*\otimes x_j)$, $q_i\frak X'q_j\to q_k\frak X'q_l$ $(i,j,k,l\in\Bbb N)$ is an isometric isomorphism.
	\end{rem}
	
	\begin{rem}
		We should point out that the conditions under the second and third items of Theorem~\ref{atomicmain}, are in fact independent. For instance, let $X$ be a Banach lattice with a Schauder basis $(x_i)$ formed by its normalized atoms. If $\cal A = \cal L^r(X)$ and $\frak X = \cal L^r(X)/\cal A^r(X)$, then $e_nx' = 0 = x'e_n$ $(x'\in\frak X',\,n\in\Bbb N)$, so $E(\frak X') = F(\frak X') = \{0\}$, while $\cal A\frak X' = \frak X' = \frak X'\cal A$, i.e., the condition of the second item holds while that of the third item do not. In the opposite direction, let $X$ be as before but suppose in addition that $(x_i)$ is shrinking. Let $\cal A = \cal A^r(X)$ and $\frak X = \cal L^r(X)$. The sequence $(e_n)$ of natural projections with respect to $(x_i)$ is then a norm-bounded a.i.~for $\cal A^r(X)$. As a result, for every $a\in\cal A$ and $x'\in\frak X'$, $\lim_n e_nax' = ax'$ and $\lim_n x'ae_n = x'a$ -- both limits in the norm-topology -- so we must have $\sup_n e_nax' = ax'$ and $\sup_n x'ae_n = x'a$, whenever $a$ and $x'$ are positive. On the other hand, $E(\frak X')$ and $F(\frak X')$ are not weak*-closed in $\frak X'$. To see it consider, for instance, the sequence $(\lambda_n)$ in $E(\frak X')\cap F(\frak X')$, defined by $\lambda_n(a) := n^{-1} \sum_{i=1}^n x_i^*(a(x_i))$ $(a\in\frak X,\,n\in\Bbb N)$. Let $\lambda$ be a weak*-limit point of $\{\lambda_n : n\in\Bbb N\}$ and let $(\lambda_{n_\alpha})$ be a subnet of $(\lambda_n)$, with weak*-limit $\lambda$. Then $\lambda(\mathrm{id}_X) = \lim_\alpha \lambda_{n_\alpha}(\mathrm{id}_X) = 1$, while on the other hand $E\lambda = w^*\text{-}\lim_n e_n\lambda$ (see the proof below), so $(E\lambda)(\mathrm{id}_X) = \lim_n (e_n\lambda)(\mathrm{id}_X) = \lim_n \lambda(e_n) = \lim_n \lim_\alpha \lambda_{m_\alpha}(e_n) = \lim_n \lim_\alpha m_\alpha^{-1} \sum_{i=1}^n x_i^*(x_i) = 0$. The map $E$ is a projection (see below), and so, $\lambda\notin E(\frak X')$. The proof that $\lambda\notin F(\frak X')$, is completely analogous. 
	\end{rem}
	
	Derivations into dual Banach modules occupy an importance place within the bounded cohomology theory of Banach algebras, so our choice of dual modules in Theorem~\ref{atomicmain} is partially justified by this. In addition, in the case of a dual module, the weak*-topology provides a perfect candidate for the topology $\widetilde{\omega}$ of Theorem~\ref{32like}. It is conceivable that one may be able to replace dual Banach lattice modules in Theorem~\ref{atomicmain} by more general Banach lattice modules, satisfying the Levi or some similar property, but we will not explore that possibility here. 
	
	\begin{proof}[Proof of Theorem~\ref{atomicmain}]
		Let $\cal A$ and $\frak X$ be as in the hypotheses and let $D:\cal A\to\frak X'$ be a continuous derivation. We shall define topologies $\tau$ and $\omega$ on $\cal A$ and $\frak X'$, respectively, and show that $(\cal A,\tau)$, $(\frak X',\omega)$ and $D:(\cal A,\tau)\to (\frak X,\omega)$ satisfy the hypotheses of Theorem~\ref{32like}. We shall suppose $\|ax'\|\leq M\|a\|\|x'\|$ and $\|x'a\|\leq M\|a\|\|x'\|$ $(a\in\cal A,\, x'\in\frak X')$. 
		
		\smallskip
		
		$\bullet$ \emph{The topology $\tau$ and the algebra $(\cal A,\tau)$:} We let $\tau$ be the strict topology on $\cal A$ define by $\cal F(X)$, i.e., for each $b\in\cal F(X)$, let $\tau_b:\cal A\to\Bbb R_+\cup\{0\}$ be the seminorm defined by
		\[
		\tau_{b}(a) := \|ab\| + \|ba\|
		\quad (a\in\cal A),
		\]
		and let $\tau$ be the topology on $\cal A$ generated by the $\tau_b$'s.	It is easy to see that multiplication on $(\cal A,\tau)$ is separately continuous. Furthermore, $(\cal A,\tau)$ is Hausdorff, for if $a\in\cal A\setminus\{0\}$, then $|a|(x_i)\ne 0$ for some $i$, and hence, $\tau_{q_i}(a)>0$. So $(\cal A,\tau)$ is definitely a topological algebra in the sense of the note. That $\cal A$ is semiprime follows from \cite[Theorem~2.5.8(i)]{Da} and \cite[Proposition 30.5]{BD}.
		
		Next, let $(a_\alpha)\subset\cal A_+$ be a $\tau$-converging net with $\tau$-limit $a$. Since $\|a_\alpha(x_i)-a(x_i)\| = \|a_\alpha q_i-aq_i\|\leq \tau_{q_i}(a_\alpha-a)\to 0$ as $n\to\infty$ $(i\in\Bbb N)$, one has that $a(x_i)\geq 0$ $(i\in\Bbb N)$, so $a\in\cal A_+$ and $\cal A_+$ is therefore $\tau$-closed. Furthermore, $(e_n)\subset\cal A_+$ is an a.i.~for $(\cal A,\tau)$. To see it, let $a\in\cal A$ and $b\in\cal F(X)$ arbitrary. Then, for every $n\in\Bbb N$,
		\[
		\begin{split}
			\tau_{b}(a-ae_n) &= \|(a-ae_n)b\| + \|b(a-ae_n)\| \\ 
			&= \|a\|\|b-e_nb\| + \|ba-(ba)e_n\|.
		\end{split}
		\]
		Since $(x_i)$ is shrinking, $(e_n)$ is an a.i.~for $\cal F(X)$, so $\tau_{b}(ae_n-a)\to 0$ as $n\to\infty$. The proof that $\tau_{b}(e_na-a)\to 0$ as $n\to\infty$ is completely analogous. 
		
		\smallskip
		
		$\bullet$ \emph{The topology $\omega$ and the $(\cal A,\tau)$-module $(\frak X',\omega)$:}
		Before we can define $\omega$, some preparation is needed. Let $E$ and $F$ be as in the hypotheses. We show first that $E$, $F$ is a pair of (necessarily, bounded) commuting projections. To this end, recall that $\cal L^r(\frak X')$ can be isometrically identified with $(\frak X'\otimes_{|\pi|}\frak X)'$, and so, norm-bounded subsets of $\cal L^r(\frak X')$ are pre-compact with respect to the topology generated by the seminorms $\|\cdot\|_{x,x'}:\cal L^r(\frak X')\to\Bbb R_+$, $T\mapsto|(Tx')(x)|$ $(x\in\frak X,\,x'\in\frak X')$. It is then easy to see that the (necessarily unique) limit-points in $\cal L^r(\frak X')$ of the sets $\{L_n:\frak X'\to\frak X',x'\mapsto e_nx' : n\in\Bbb N\}$ and $\{R_n:\frak X'\to\frak X',x'\mapsto x'e_n : n\in\Bbb N\}$, with respect to this topology, must coincide with $E$ and $F$, respectively. Accordingly, we must have that 
		\begin{equation}\label{weakstardef}
			(Ex')(x) = \lim_n (e_nx')(x)
			\quad\text{ and }\quad
			(Fx')(x) = \lim_n (x'e_n)(x)
			\quad (x\in\frak X,\, x'\in\frak X').
		\end{equation}
		It follows that
		\[
		\begin{split}
			E(Ex') &= w^*\text{-}\lim_n e_n(w^*\text{-}\lim_m e_mx') \\
			&= w^*\text{-}\lim_n (w^*\text{-}\lim_m e_ne_mx') = w^*\text{-}\lim_n e_nx' = Ex' \quad (x'\in\frak X'),
		\end{split}
		\]
		and similarly, that $F^2 = F$. Also from (\ref{weakstardef}), taking into account that $x'\mapsto ax'$, $\frak X'\to\frak X'$ and $x'\mapsto x'a$, $\frak X'\to\frak X'$ are $w^*$-$w^*$-continuous $(a\in\cal A)$ and that $\frak X'_+$ is $w^*$-closed, one finds that if $x'\in\frak X'_+$ then
		\[
		E(Fx') = w^*\text{-}\lim_m w^*\text{-}\lim_n e_mx'e_n\leq w^*\text{-}\lim_m w^*\text{-}\lim_n (Ex')e_n = F(Ex').
		\]
		Similarly, $F(Ex')\leq E(Fx')$ $(x'\in\frak X'_+)$, and so, $EF = FE$. Note that $E(\frak X')+F(\frak X')$ is then the sum of the ranges of the mutually orthogonal projections $E-EF$, $EF$ and $F-EF$, and is therefore norm-closed.
		
		We now define $\omega$ as follows. For every $\mathbf{b} = (b_1,b_2)\in\cal F(X)\times\cal F(X)$ and every $\mathbf{f} = (f_1,f_2,f_3,f_4)\in\frak X\times (\frak X\cap F(\frak X')^\perp)\times (\frak X\cap E(\frak X')^\perp)\times (E(\frak X')+F(\frak X'))^\perp$ (where we have identified $\frak X$ with its image in $\frak X''$ under the canonical embedding), let
		\[
		\omega_{\mathbf{b},\mathbf{f}}(x')
		:= |f_1(b_1x'b_2)| + |f_2(b_1x')| + |f_3(x'b_2)| + |f_4(x')|
		\quad (x'\in\frak X'),
		\]
		and let $\omega$ be the topology on $\frak X'$ generated by the $\omega_{\mathbf{b},\mathbf{f}}$'s. For each seminorm $\omega_{\mathbf{b},\mathbf{f}}$, 
		\[
		\begin{split}
			\omega_{\mathbf{b},\mathbf{f}}(ax')
			&= |f_1(b_1ax'b_2)| + |f_2(b_1ax')|
			\leq M\|x'\|(\|f_1\|\|b_2\|+\|f_2\|)\tau_{b_1}(a),
		\end{split}
		\]
		and similarly,
		\[
		\omega_{\mathbf{b},\mathbf{f}}(x'a)
		\leq M\|x'\|(\|f_1\|\|b_1\|+\|f_3\|)\tau_{b_2}(a).
		\]
		So if $(a_\alpha)\subset\cal A$ is a net such that $\tau$-$\lim_\alpha a_\alpha = 0$ then $\omega$-$\lim_\alpha a_\alpha x' = \omega$-$\lim_\alpha x'a_\alpha = 0$ $(x'\in\frak X')$. Also,
		\[
		\omega_{\mathbf{b},\mathbf{f}}(ax')\leq
		\omega_{(b_1a,b_2),\mathbf{f}}(x')
		\quad\text{and}\quad
		\omega_{\mathbf{b},\mathbf{f}}(x'a)\leq
		\omega_{(b_1,ab_2),\mathbf{f}}(x')
		\quad (x'\in\frak X'),
		\]
		so, if $(x'_\alpha)\subset\frak X'$ is a net such that $\omega$-$\lim_\alpha x'_\alpha = 0$, then $\omega$-$\lim_\alpha ax'_\alpha = \omega$-$\lim_\alpha x'_\alpha a = 0$ for every~$a\in\cal A$. 
		
		To conclude $(\frak X',\omega)$ is a topological $(\cal A,\tau)$-module, it only remains to check that $(\frak X',\omega)$ is Hausdorff. To see it, we consider several cases. If $x'\in\frak X'\setminus (E(\frak X')+F(\frak X'))$, then there is $f\in (E(\frak X')+F(\frak X'))^\perp$ such that $f(x')\ne 0$ (recall $E(\frak X') + F(\frak X')$ is norm-closed). So suppose $x'\in (E(\frak X')+F(\frak X'))\setminus \{0\}$. Then $x' = (E-EF)x' + EFx' + (F-EF)x'$. If $(E-EF)x'\ne 0$ there is $x\in\frak X\cap F(\frak X')^\perp$ such that $x'(x) = ((E-EF)x')(x)\ne 0$ and $x'(x) = (Ex')(x) = \lim_n (e_nx')(x)$, so $(e_nx')(x)\ne 0$ for some $n\in\Bbb N$. Analogously, if $(F-EF)x'\ne 0$ there are $x\in\frak X\cap E(\frak X')^\perp$ and $m\in\Bbb N$ such that $(x'e_m)(x)\ne 0$. Lastly, if $x' = EFx' = w^*\text{-}\lim_n w^*\text{-}\lim_n e_nx'e_m$, there are $x\in\frak X$ and $m,n\in\Bbb N$ such that $(e_nx'e_m)(x)\ne 0$. Thus, for any $x'\in\frak X'\setminus \{0\}$, there is a seminorm $\omega_{\mathbf{b},\mathbf{f}}$ such that $\omega_{\mathbf{b},\mathbf{f}}(x')\ne 0$.
		
		\smallskip
		
		$\bullet$ \emph{Sequential continuity of $D:(\cal A,\tau)\to (\frak X',\omega)$:} We will show here that $D$ is in fact $\tau$-$\omega$-continuous. Note $D(\cal A)\subseteq \cal A\frak X'+\frak X'\cal A\subseteq E(\frak X')+F(\frak X')$ (the first inclusion because $\cal A = \cal A\cal A$, and the second because $ax' = \sup_n e_nax' = E(ax')$ and $x'a = \sup_n x'ae_n = F(x'a)$ $(a\in\cal A_+,\, x'\in\frak X'_+)$ -- by hypothesis). Thus, for every seminorm $\omega_{\mathbf{b},\mathbf{f}}$, one has that
		\[
		\begin{split}
			\omega_{\mathbf{b},\mathbf{f}}(D(a))
			&= |f_1(b_1D(a)b_2)|
			+ |f_2(b_1D(a))| + |f_3(D(a)b_2)| \\
			&= \big|f_1(D(b_1ab_2)) -
			f_1(D(b_1)ab_2) - f_1(b_1aD(b_2))\big| \\
			&\hspace{5cm} + |f_2(D(b_1a))| + |f_3(D(ab_2))| \\
			&\leq 2M\|f_1\|\|D\|\|b_2\|\tau_{b_1}(a) +
			M\|f_1\|\|D\|\|b_1\|\tau_{b_2}(a) \\
			&\hspace{3.9cm} + \|f_2\|\|D\|\tau_{b_1}(a) +
			\|f_3\|\|D\|\tau_{b_2}(a).
		\end{split}
		\]
		It follows readily from this that $D:(\cal A,\tau)\to(\frak X',\omega)$ is continuous at 0, and hence, continuous.
		
		\smallskip
		
		$\bullet$ \emph{The topology $\widetilde{\omega}$:} We let $\widetilde{\omega}$ be the weak*-topology on $\frak X'$. Then $\widetilde{\omega}$ meets all the requirements of Theorem~\ref{32like}, i.e., $\frak X'_+$ is weak*-closed; absolutely majorized subsets of $\frak X'$ (being norm-bounded) are weak*-precompact; for every $a\in\cal A$, the maps $x'\mapsto ax'$, $\frak X'\to\frak X'$ and $x'\mapsto ax'$, $\frak X'\to\frak X'$ are $w^*$-$w^*$-continuous; and last, if $x'\in E(\frak X')+F(\frak X')$, then, for every seminorm $\omega_{\mathbf{b},\mathbf{f}}$, one has that $\omega_{\mathbf{b},\mathbf{f}}(x') = |x'(b_2f_1b_1)| + |x'(f_2b_1)| + |x'(b_2f_3)|$, so the weak*-topology is certainly finer than $\omega$ on $E(\frak X')$ and $F(\frak X')$. (Note that $E(\frak X')$ and $F(\frak X')$ are the weak*-closures of $\cal A\frak X'$ and $\frak X'\cal A$, respectively, for $\cal A\frak X'\subseteq E(\frak X')\subseteq \overline{\cal A\frak X'}^{w^*}$ and $\frak X'\cal A\subseteq F(\frak X')\subseteq \overline{\frak X'\cal A}^{w^*}$, where the second inclusion follows, in both cases, from (\ref{weakstardef}).) 
		
		\smallskip
		
		$\bullet$ \emph{The sequence $(p_i)$:} Lastly, we construct a sequence $(p_i)$ of pairwise orthogonal o-minimal idempotents, so that the corresponding sequence $\pi_n := \sum_{i=1}^n p_i$ $(n\in\Bbb N)$, satisfies the requirements of Theorem~\ref{32like}.
		
		\smallskip
		
		-- \emph{Suppose first that (i) holds:} 
		Set $\xi_{ij}' := q_i|q_iD(q_i)q_j|q_j$ $(i,j\in\Bbb N)$. For each $i\in\Bbb N$, let $J_i := \{j : \xi_{ij}'\ne 0\}$ and note that for every $(\alpha_j)\in c_{00}$, $(\varepsilon_j)\in\{\pm 1\}^{\Bbb N}$ and $m,n\in\Bbb N$, with $m\leq n$,
		\[
		\bigg\|\sum_{j\in J_i, j\leq m} \alpha_j\varepsilon_j\xi_{ij}'\bigg\|
		= \bigg\|\bigg(\sum_{j\in J_i, j\leq n} \alpha_j\xi_{ij}'\bigg)
		\bigg(\sum_{j=1}^m \varepsilon_jq_j\bigg)\bigg\|
		\leq M\bigg\|\sum_{j\in J_i, j\leq n} \alpha_j\xi_{ij}'\bigg\|\|e_m\|,
		\]
		so $(\xi_{ij}')_{j\in J_i}$ is an unconditional basic sequence in $\frak X'$ with $\xi'_{ij}\in q_i\frak X'q_j$ $(j\in J_i)$. Since no such sequence can be equivalent to the unit vector basis of $c_0$ (by our hypotheses) and
		\[
		\begin{split}
			\sup_{|t_j|\leq 1 : 1\leq j\leq n}
			\Big\|\sum\nolimits_{j\in J_i, j\leq n} t_j\xi_{ij}'\Big\| &=
			\Big\|\sum\nolimits_{j\in J_i, j\leq n} \xi_{ij}'\Big\| \\
			&\leq \|q_i|D(q_i)|e_n\|\leq M\|q_i\|\|D(q_i)\|\|e_n\|
			\quad (n\in\Bbb N),
		\end{split}
		\]
		we must have $\lim_{j} \|\xi_{ij}'\| = 0$ $(i\in\Bbb N)$ (see for instance, \cite[Chapter~V, Theorem~6 and Corollary~7]{Di}). In a similar fashion, but working instead with the columns of $(\xi_{ij}')_{i,j\in\Bbb N}$, noting that $q_iD(q_i)q_j = - q_iD(q_j)q_j$, and hence, that $\xi_{ij}' = q_i|q_iD(q_j)q_j|q_j$ $(i,j\in\Bbb N)$, one shows that $\lim_{i} \|\xi_{ij}'\| = 0$ $(j\in\Bbb N)$. 
		
		One can now construct, inductively, a strictly increasing sequence, $(k_i)$ say, so that the double sum $\sum_i \sum_j \|\xi_{k_ik_j}'\|$ converges and $(x_{k_i})\sim (x_i)$. This can be done, for instance, as follows. First, to fix ideas, let $\phi:I\times\Bbb N\to\Bbb N$ and $(x_{n,j})$ $(n\in I = \Bbb N)$ be the bijective map and Schauder bases, used in the definition of $(x_i)$; and for each $n\in I$, let $(\mu_{n,j})\subset [1,\mu]$ be a sequence, as in the definition of the $(\star)$ property, for the basis $(x_{n,j})$. Set $k_1 := 1$, and in general, if $k_1,\ldots,k_i$ have been chosen and $\phi^{-1}(i+1) = (n,m)$, choose $k_{i+1}\in\phi(\{n\}\times\Bbb N)$ so that $k_{i+1}>k_i$, $\sum_{l=1}^{i} (\|\xi'_{k_{i+1}k_{l}}\| + \|\xi'_{k_{l}k_{i+1}}\|)\leq 2^{-i}$ and $(x_{\phi(n,l)})_{l=1}^m\overset{\mu_{n,m}}{\sim} (x_{k_{\phi(n,l)}})_{l=1}^m$ (which is possible by the result from the previous paragraph and our choice of $(x_{n,j})$). Clearly, the basic sequence $(x_{k_i})$, constructed in this way, will satisfy $(x_{k_i})\overset{\mu}{\sim} (x_i)$. Set $p_i := q_{k_{i}}$ $(i\in\Bbb N)$ and $\pi_n := \sum_{i=1}^n p_i$ $(n\in\Bbb N)$. Then $(p_i)$ is a sequence of pairwise orthogonal o-minimal idempotents, and
		\[
		\sum_{i=1}^n |p_iD(p_i)\pi_n|\leq \sum_{i=1}^n \sum_{j=1}^n p_i|p_iD(p_i)p_j|p_j,
		\]
		so $(\sum_{i=1}^n |p_iD(p_i)\pi_n|)_{n\in\Bbb N}$ is norm-bounded, and as $\frak X'$ is Levi, $\sup_n \sum_{i=1}^n |p_iD(p_i)\pi_n|$ exists. Analogously, one verifies that $\sup_n \sum_{i=1}^n |\pi_nD(p_i)p_i|$ exists.
		
		To see $(\pi_n)$ is convergence preserving, let $(a_n)\subset\cal A$ be a sequence in $\cal A$ with $\tau$-limit $a$ and let $p := \sup_n \pi_n$ in $\cal L^r(X)$. It will suffice to show that $\lim_m \tau_b(\pi_ma_m-pa) = \lim_m \tau_b(a_m\pi_m-ap) = 0$ $(b\in\cal F(X))$. So let $b\in\cal F(X)$ be arbitrary. For every $m,n\in\Bbb N$, one has that
		\begin{equation}\label{convpres}
			\begin{split}
				\tau_{b}(\pi_ma_m-pa)&= \|(\pi_ma_m-pa)b\| + \|b(\pi_ma_m-pa)\| \\
				&\leq \|(\pi_ma_m-pa)(b-e_nb)\| + \|(\pi_ma_m-pa)e_n\|\|b\| \\
				&\hspace{1cm} + \|(b-be_n)(\pi_ma_m-pa)\| + \|b\|\|e_n(\pi_ma_m-pa)\|.
			\end{split}
		\end{equation}
		Note that
		\[
		\|(\pi_ma_m-pa)e_n\|\leq \sum_{i=1}^n (\|\pi_m\|\|(a_m-a)q_i\| + \|(\pi_m-p)aq_i\|),
		\]
		and
		\[
		\|e_n(\pi_ma_m-pa)\|\leq \sum_{i=1}^n \|q_i(a_m-a)\|,
		\]
		so both, $\|(\pi_ma_m-pa)e_n\|$ and $\|e_n(\pi_ma_m-pa)\|$ tend to 0 as $m\to\infty$ for each $n\in\Bbb N$ fixed (that $\lim_m \|(\pi_m-p)aq_i\| = 0$ $(i\in\Bbb N)$ follows on noting that $\|(\pi_m-p)aq_i\| = \|(\pi_m-p)ax_i\|$ and $\|(p-\pi_m)x\|\to 0$ as $m\to\infty$ for every $x\in X$, because $(x_i)$ is an unconditional basis for $X$). Now let $\varepsilon>0$ arbitrary and choose $n$ large enough so that $\max\{\|b-e_nb\|,\|b-be_n\|\}\leq \varepsilon/(2(\|a\|+\sup_m \|a_m\|))$. Then, letting $m\to\infty$ on the right hand side of (\ref{convpres}), one obtains that $\limsup_m \tau_b(\pi_ma_m-pa)\leq \varepsilon$. That $\tau$-$\lim_m \pi_ma_m = pa$ follows readily from this. The proof that $\tau$-$\lim_m a_m\pi_m = ap$ is completely analogous. 
		
		Next we check $(e_n)$ admits a factorization through $(\pi_n)$. To this end, let $u_n := \sum_{i=1}^n x_{k_i}^*\otimes x_i$ and $v_n := \sum_{i=1}^n x_i^*\otimes x_{k_i}$ $(n\in\Bbb N)$. Since $(x_i)\overset{\mu}{\sim} (x_{k_i})$, $(u_n)$ and $(v_n)$ are norm-bounded (and hence, $\tau$-bounded) sequences in $\cal F(X)$. Furthermore, $e_n = u_n\pi_nv_n$ $(n\in\Bbb N)$, and clearly, $(u_n),(v_n)\subset\cal A_+$. To see they are convergence preserving, let $(a_n)\subset\cal A$ be a $\tau$-convergent sequence with $\tau$-limit $a$. Let $u,v\in\cal L^r(X)$ be defined by $u(\sum_i \alpha_ix_i) := \sum_i \alpha_{k_{i}}x_i$ and $v(\sum_i \alpha_ix_i) := \sum_i \alpha_ix_{k_{i}}$ $(\sum_i \alpha_ix_i\in X)$. Then, for every $b\in\cal F(X)$, with $p$ as in the previous paragraph, one has that
		\[
		\begin{split}
			\tau_{b}(u_na_n-ua) &= \tau_{b}(u(\pi_na_n-pa))\leq \|u\|\tau_{b}(\pi_na_n-pa) + \tau_{bu}(\pi_na_n-pa), \\
			\tau_{b}(a_nu_n-au) &= \tau_{b}((a_ne_n-a)u)\leq \tau_{ub}(a_ne_n-a) + \|u\|\tau_{b}(a_ne_n-a), \\
			\tau_{b}(v_na_n-va) &= \tau_{b}(v(e_na_n-a))\leq \|v\|\tau_{b}(e_na_n-a) + \tau_{bv}(e_na_n-a), \\
			\tau_{b}(a_nv_n-av) &= \tau_{b}((a_n\pi_n-ap)v)\leq \tau_{vb}(a_n\pi_n-ap) + \|v\|\tau_{b}(a_n\pi_n-ap).
		\end{split}
		\]
		Note that $\lim_n \tau_c(a_ne_n-a) = \lim_n \tau_c(e_na_n-a) = 0$ $(c\in\cal F(X))$, for
		\[
		\begin{split}
			\tau_c(a_ne_n-a)&\leq \|c(a_n-a)e_n\| + \|(a_n-a)e_nc\| + \tau_c(ae_n-a) \\
			&\leq \tau_c(a_n-a) + \|(a_n-a)(e_nc-c)\| + \tau_c(ae_n-a),
		\end{split}
		\]
		and similarly,
		\[
		\tau_c(e_na_n-a)\leq \tau_c(a_n-a) + \|(ce_n-c)(a_n-a)\| + \tau_c(e_na-a).
		\]
		Combining this last observation with the previous inequalities and the result from the previous paragraph, one obtains that $u_na_n\to ua$, $a_nu_n\to au$, $v_na_n\to va$ and $a_nv_n\to av$ in $(\cal A,\tau)$, and hence, that $(u_n)$ and $(v_n)$ are both convergence preserving.
		
		It only remains to notice that each one of the sets $\{D(u_n) : n\in\Bbb N\}$, $\{D(v_n) : n\in\Bbb N\}$, $\{u_nx' : n\in\Bbb N\}$ and $\{x'v_n : n\in\Bbb N\}$ $(x'\in\frak X')$ is norm-bounded, and hence, weak*-precompact.
				
		\smallskip
		
		-- \emph{Now suppose it is condition (ii) the one that holds:} It will suffice to show that $\lim_{i} \|\xi_{ij}'\| = \lim_{j} \|\xi_{ij}'\| = 0$ $(i,j\in\Bbb N)$, where the $\xi_{ij}'$'s are defined as before. We argue by contradiction. Suppose first for some $i\in\Bbb N$ there is an infinite subset $J\subseteq\Bbb N$ such that $\inf\{\|\xi_{ij}'\| : j\in J\} =: \delta >0$. Since no subsequence of $(x_{i})$ can be equivalent to the unit vector basis of $\ell_1$, there must be a scalar sequence $(\alpha_i)_{i\in J}$ such that $\sum_{i\in J} |\alpha_i| = \infty$ while $\sum_{i\in J} \alpha_i x_i$ converges. Let $q_{ij} := x_j^*\otimes x_i$ $(i,j\in\Bbb N)$ -- of course, $q_{ii} = q_i$ $(i\in\Bbb N)$. Then, for any $k\in\Bbb N$ fixed, one must have, 
		\begin{equation}\label{(ii)}
			\delta \sum_{j\in J, j\leq n} |\alpha_j|\leq
			\sum_{j\in J, j\leq n} |\alpha_j|\|\xi_{ij}'\|
			\leq \sum_{j\in J, j\leq n} |\alpha_j| M\|\xi_{ij}'q_{jk}\|\|q_{kj}\|
			\quad (n\in\Bbb N).
		\end{equation}
		If $(f_i)_{i=1}^m$ is a basis for $q_i\frak X'q_k$ and $(f_i^*)_{i=1}^m$ is the corresponding sequence of biorthogonal functionals, then, for some constant $C$, 
		\[
		\|\xi_{ij}'q_{jk}\|\leq C \sum_{l=1}^m |f_l^*(\xi_{ij}'q_{jk})|
		\quad (j\in J).
		\]
		Combining the latter with (\ref{(ii)}), one obtains that
		\[
		\delta \sum_{j\in J, j\leq n} |\alpha_j|\leq
		C M\sum_{l=1}^m \sum_{j\in J, j\leq n}
		|\alpha_j| |f_l^*(\xi_{ij}'q_{jk})|,
		\]
		so we must have $\sum_{j\in J} |f_l^*(\alpha_j\xi_{ij}'q_{jk})| = \infty$ for some $1\leq l\leq m$.	For each $j\in J$, set $\theta_j := |f_l^*(\alpha_j \xi_{ij}'q_{jk})|/f_l^*(\alpha_j \xi_{ij}'q_{jk})$ if $f_l^*(\alpha_j \xi_{ij}'q_{jk})\ne 0$ and zero otherwise. Then, for every $n\in\Bbb N$,
		\[
		\begin{split}
			\sum_{j\in J, j\leq n}
			\big|f_l^*(\alpha_j\xi_{ij}'q_{jk})\big|
			&= \Bigg|\sum_{j\in J, j\leq n}
			\theta_j f_l^*(\alpha_j\xi_{ij}'q_{jk})\Bigg|
			\leq \|f_l^*\| \Bigg\|\sum_{j\in J, j\leq n}
			\theta_j\alpha_j\xi_{ij}'q_{jk}\Bigg\| \\
			&= \|f_l^*\| \Bigg\|\bigg(\sum_{j\in J, j\leq n} \xi_{ij}'\bigg)
			\bigg(\sum_{j\in J, j\leq n} \theta_j \alpha_j q_{jk}\bigg)\Bigg\| \\
			&\leq M\|f_l^*\| 
			\|q_i\|\|D(q_i)\|\|e_n\|
			\Bigg\|\sum_{j\in J} \alpha_j x_j\Bigg\|,
		\end{split}
		\]
		which is clearly impossible, since the left hand side tends to infinity with $n$. Thus, $\lim_{j} \|\xi_{ij}'\| = 0$ $(i\in\Bbb N)$.
		
		To show $\lim_j \|\xi_{ji}'\| = 0$ $(i\in\Bbb N)$, assume towards a contradiction that there is $J\subseteq\Bbb N$ infinite such that $\inf\{\|\xi_{ji}'\| : j\in J\} =:\delta>0$. Choose $(\alpha_i)_{i\in J}$ so that $\sum_{i\in J} |\alpha_i| = \infty$ and $\sum_{i\in J} \alpha_ix_i^*$ converges. Fix $k\in\Bbb N$ and choose a basis $(g_i)_{i=1}^m$ for $q_k\frak X'q_i$. Using that $\|\xi_{ji}'\|\leq M\|q_{jk}\|\|q_{kj}\xi_{ji}'\|\leq K M\sum_{l=1}^m |g_l^*(q_{kj}\xi_{ji}')|$ $(j\in J)$, where $K$ is a constant depending only on $(g_i)_{i=1}^m$, one obtains that $\delta \sum_{j\in J,\, j\leq n} |\alpha_j|\leq KM\sum_l \sum_{j\in J,\, j\leq n} |\alpha_j||g_l^*(q_{kj}\xi_{ji}')|$, so $\sum_{j\in J} |g_l^*(\alpha_j q_{kj}\xi_{ji}')| = \infty$ for some $1\leq l\leq m$. From this last, arguing along the same lines as above, one concludes that $\sum_{j\in J,\, j\leq n} |g_l^*(\alpha_j q_{kj}\xi_{ji}')|\leq \|g_l^*\|\|\sum_{i\in J} \alpha_ix_i^*\|\|\sum_{j\in J,\, j\leq n} \xi_{ji}'\|$ $(n\in\Bbb N)$ -- whence a contradiction.

		\smallskip
		
		-- \emph{Lastly, consider the case where (iii) holds:} Suppose $D(a) = ax'-x'a$ $(a\in\cal A^r(X))$, for some $x'\in\frak X'$ fixed. In this case, we can take $p_i := q_i$ $(i\in\Bbb N)$. Then $\pi_n = e_n$ $(n\in\Bbb N)$, so $(\pi_n)$ is convergence preserving, and $u_n := e_n$ and $v_n := e_n$ $(n\in\Bbb N)$, provide a factorization of $(e_n)$ through itself with the required properties. Furthermore, 
		\[
		\sum_{i=1}^n |p_iD(p_i)\pi_n| = \sum_{i=1}^n |p_i(p_ix'-x'p_i)\pi_n|\leq \sum_{i=1}^n p_i|x'|(\pi_n-p_i)\leq \pi_n|x'|\pi_n. 
		\]
		So $\big(\sum_{i=1}^n |p_iD(p_i)\pi_n|\big)_{n\in\Bbb N}$ is norm-bounded, and hence (since every dual Banach lattice has the Levi property), $\sup_n |p_iD(p_i)\pi_n|$ exists. The proof that $\sup_n |\pi_nD(p_i)p_i|$ exists is completely analogous. 
		
		\smallskip
		
		We have thus shown that all hypotheses of Theorem~\ref{32like} are satisfied, and so, the regularity of $D$ follows.
	\end{proof}
	
	To conclude the section, we present some concrete situations in which the conditions of Theorem~\ref{atomicmain} hold.  
	
	\begin{cor}\label{cor1}
		Let $X$, $(x_i)$ and $(q_i)$ be as in Theorem~\ref{atomicmain}. Let $e := \mathrm{id}_X$ and let $\frak X$ be a Banach lattice $\cal L^r(X)$-module such that $\sup_n e_nx' = ex'$ and $\sup_n x'e_n = x'e$ $(x'\in\frak X'_+)$. Furthermore, suppose either $\frak X$ contains no complemented copies of $\ell_1$, or $X$ is reflexive and $\dim{q_i\frak X'q_j}<\infty$ $(i,j\in\Bbb N)$. Then every bounded derivation $D:\cal L^r(X)\to\frak X'$ is regular.
	\end{cor}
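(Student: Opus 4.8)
The plan is to obtain the corollary by applying Theorem~\ref{atomicmain} with $\cal A = \cal L^r(X)$ and checking that each hypothesis of that theorem holds. The structural part is immediate: $\cal L^r(X)$ is trivially a closed Riesz algebra ideal of itself, and since $\mathrm{id}_X\in\cal L^r(X)$ the algebra is unital, so $\cal L^r(X)\cal L^r(X) = \cal L^r(X)$; the data $X$, $(x_i)$, $(q_i)$ carry over unchanged, and in particular $(x_i)$ is shrinking (this is automatic if $X$ is reflexive and is part of the inherited setup otherwise). Thus only the two bullets concerning $E$, $F$ and the module action, together with one of (i)--(iii), require attention.

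First I would identify the operators $E$ and $F$ with the one-sided multiplications by $e = \mathrm{id}_X$. By hypothesis $Ex' = \sup_n e_nx' = ex'$ and $Fx' = \sup_n x'e_n = x'e$ for $x'\in\frak X'_+$; since both sides are additive and positively homogeneous, $E$ and $F$ agree on all of $\frak X'$ with the left and right actions of $e$. These actions are $w^*$-$w^*$-continuous (the module being dual) and idempotent because $e^2 = e$, so each is a $w^*$-continuous projection whose range, being the fixed-point set $\{x' : Ex' = x'\}$ (resp.\ $\{x' : Fx' = x'\}$), is $w^*$-closed. This gives the second bullet. For the third, fix $a\in\cal L^r(X)_+$ and $x'\in\frak X'_+$; then $ax'\in\frak X'_+$, so applying the hypothesis to $ax'$ yields $\sup_n e_nax' = e(ax') = (ea)x' = ax'$ (using $ea = a$), and symmetrically $\sup_n x'ae_n = x'a$.

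It then remains to secure one of the conditions (i)--(iii). If $X$ is reflexive and $\dim q_i\frak X'q_j<\infty$, this is precisely condition (ii) and there is nothing to prove. In the alternative case I would deduce condition (i) from the absence of a complemented copy of $\ell_1$ in $\frak X$, arguing by contraposition: if for some $i$ a sequence $(\xi_n)$ with $\xi_n\in q_n\frak X'q_i$ (or $\xi_n\in q_i\frak X'q_n$) had a subsequence equivalent to the unit vector basis of $c_0$, the closed linear span of that subsequence would be an isomorphic copy of $c_0$ inside the dual space $\frak X'$; by the Bessaga--Pe{\l}czy\'nski theorem (see \cite{Di}) this forces a complemented copy of $\ell_1$ in $\frak X$, contradicting the hypothesis. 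Hence no such sequence exists and (i) holds.

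Once all hypotheses are in place, Theorem~\ref{atomicmain} delivers the regularity of every bounded derivation $D:\cal L^r(X)\to\frak X'$. The only ingredient beyond the bookkeeping forced by the unitality of $\cal L^r(X)$ is the $c_0$--$\ell_1$ duality in the last step; I expect this translation of a hypothesis on $\frak X$ into one on the subspaces $q_i\frak X'q_j$ of $\frak X'$ to be the main (albeit standard) point of the argument.
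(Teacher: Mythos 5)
Your proposal is correct and follows essentially the same route as the paper's own proof: verify the $E$, $F$ ranges are weak*-closed by identifying them with the (weak*-continuous, idempotent) actions of $e=\mathrm{id}_X$, deduce $\sup_n e_nax'=ax'$ and $\sup_n x'ae_n=x'a$ directly from the hypothesis applied to $ax'$ and $x'a$, and convert the absence of complemented copies of $\ell_1$ in $\frak X$ into condition (i) via the Bessaga--Pe{\l}czy\'nski result that a copy of $c_0$ in $\frak X'$ forces a complemented copy of $\ell_1$ in $\frak X$ (the same citation, \cite[Chapter~V, Theorem~10]{Di}, used in the paper). The paper's proof is simply a terser version of yours, leaving the projection and positivity bookkeeping implicit.
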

	
	\begin{proof}
		The fact that $\sup_n e_nx' = ex'$ and $\sup_n x'e_n = x'e$ for every $x'\in\frak X'_+$, readily implies that $\sup_n e_nax' = ax'$ and $\sup_n x'ae_n = x'a$ $(a\in\cal A_+,\, x'\in\frak X'_+)$. Furthermore, taking into account the definitions of the module products on $\frak X'$, one easily verifies that the ranges of the maps  $x'\mapsto ex'$, $\frak X'\to\frak X'$ and $x'\mapsto x'e$, $\frak X'\to\frak X'$, are both weak*-closed. It only remains to notice that the absence of complemented copies of $\ell_1$ in $\frak X$ would imply condition (i) of Theorem~\ref{atomicmain} (see for instance \cite[Chapter~V, Theorem~10]{Di}). We can thus apply Theorem~\ref{atomicmain}.
	\end{proof}
	
	\begin{cor}\label{cor2} 
		Let $X$, $(x_i)$ and $(q_i)$ be as in Theorem~\ref{atomicmain} and suppose, in addition, that $(x_i)$ is symmetric. Let $\frak X$ be a Banach lattice $\cal L^r(X)$-module such that $\sup_n e_nx' = ex'$ and $\sup_n x'e_n = x'e$ $(x'\in\frak X'_+)$. Then every bounded derivation $D:\cal L^r(X)\to\frak X'$ is regular.	
	\end{cor}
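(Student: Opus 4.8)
The plan is to verify the hypotheses of Theorem~\ref{atomicmain}, using the symmetry of $(x_i)$ to replace \emph{both} alternatives appearing in Corollary~\ref{cor1}. Every assumption of Theorem~\ref{atomicmain} other than items~(i)--(iii) is checked exactly as in the proof of Corollary~\ref{cor1}: the relations $\sup_n e_nax'=ax'$, $\sup_n x'ae_n=x'a$ ($a,x'\ge 0$) follow from the assumed identities $\sup_n e_nx'=ex'$, $\sup_n x'e_n=x'e$; and, with $e=\mathrm{id}_X$, the maps $x\mapsto ex$ and $x\mapsto xe$ on $\frak X$ are bounded idempotents, hence have norm-closed range, so their adjoints $x'\mapsto ex'$, $x'\mapsto x'e$ have weak*-closed range. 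Since $(x_i)$ is symmetric and shrinking it is not equivalent to the $\ell_1$-basis, so its fundamental function $\varphi(N):=\big\|\sum_{i\le N}x_i\big\|$ satisfies $\varphi(N)=o(N)$; dually, $\varphi^*(N):=\big\|\sum_{i\le N}x_i^*\big\|$ obeys $\varphi(N)\varphi^*(N)\asymp N$, and $\varphi^*(N)=o(N)$ precisely when $(x_i)\not\sim c_0$, i.e.~when $X$ is reflexive. It therefore remains to establish item~(i).

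For the contradiction hypothesis I would suppose, after passing to a subsequence, that $\xi_n\in q_i\frak X'q_n$, $\|\xi_n\|=1$, and that $(\xi_n)$ is $C$-equivalent to the $c_0$-basis, so $\big\|\sum_{n\le N}\xi_n\big\|\le C$ for every $N$. Since $\xi_n=q_i\xi_nq_n$ is supported on the predual corner $q_n\frak X q_i$, I would choose by Hahn--Banach $x_n\in q_n\frak X q_i$ with $\|x_n\|\le 1$ and $\xi_n(x_n)\ge 1/2$; the relations $q_\ell q_m=0$ ($\ell\ne m$) then give $\xi_n(x_m)=0$ for $n\ne m$, and show that the $x_n$ lie in the pairwise disjoint bands $q_n\frak X$, so $(x_n)$ is disjoint in $\frak X$. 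Pairing the two families,
\[
\tfrac{N}{2}\le\sum_{n\le N}\xi_n(x_n)=\Big\langle\sum_{n\le N}\xi_n,\ \sum_{m\le N}x_m\Big\rangle\le C\,\Big\|\sum_{m\le N}x_m\Big\|,
\]
whence $\big\|\sum_{m\le N}x_m\big\|\ge N/(2C)$.

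The contradiction is then to come from a matching \emph{upper} estimate $\big\|\sum_{n\le N}x_n\big\|\lesssim\varphi(N)$. Using the isometries of the remark after Theorem~\ref{atomicmain}, I would write $x_n=w_n z_n$ with $z_n\in q_i\frak X q_i$, $\|z_n\|\lesssim 1$, and $w_n:=x_i^*\otimes x_n\in q_n\cal L^r(X)q_i$, so that $\sum_{n\le N}w_n=x_i^*\otimes\big(\sum_{n\le N}x_n\big)$ has regular norm $\|x_i^*\|\,\varphi(N)$. When all profiles coincide, $z_n\equiv z$, this is decisive: $\sum_{n\le N}x_n=\big(\sum_{n\le N}w_n\big)z$, hence $\big\|\sum_{n\le N}x_n\big\|\le M\big\|\sum_{n\le N}w_n\big\|_r\|z\|\lesssim\varphi(N)=o(N)$, contradicting the lower bound. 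The symmetry of $(x_i)$ enters here through the uniformly bounded permutation operators $P_\sigma\in\cal L^r(X)$ with $P_\sigma w_n=w_{\sigma(n)}$, which I would use to reduce the general varying-profile sum $\sum_n w_n z_n$ to the single-profile case.

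I expect the main obstacle to be precisely this last reduction, that is, proving the disjoint-sequence estimate $\big\|\sum_{n\le N}w_nz_n\big\|\lesssim\varphi(N)\max_n\|z_n\|$ for arbitrary profiles; unwinding it by duality turns it into a fundamental-function lower estimate for disjoint sequences of columns of $\frak X'$, and making the permutation-symmetrization collapse the profiles rigorously is the technical heart of the argument. A second, related difficulty is that the companion case $\xi_n\in q_n\frak X'q_i$ of item~(i) yields $\varphi^*(N)$ in place of $\varphi(N)$, so the clean conclusion $o(N)$ is available there only when $X$ is reflexive; the borderline $(x_i)\sim c_0$ (where $\varphi^*(N)\asymp N$) must be treated separately, presumably by exploiting that $\frak X'$ is a dual Banach lattice together with the weak*-compactness of order intervals, and I would flag this $c_0$ case as the step most likely to require an argument genuinely different from the factorization above.
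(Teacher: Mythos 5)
Your proposal does not follow the paper's proof, and the route you choose cannot be completed. The paper's proof is short: when $(x_i)$ is symmetric and shrinking, every bounded derivation from $\cal A^r(X)$ into a dual Banach lattice module is \emph{inner} -- Johnson's argument (\cite[Proposition~6.1]{Jo}) adapts verbatim from $\cal A(\ell_p)$ to $\cal A^r(X)$ -- so alternative (iii) of Theorem~\ref{atomicmain} holds, and the remaining hypotheses are checked exactly as in Corollary~\ref{cor1}. You never consider (iii); instead you try to deduce alternative (i) from the symmetry of the basis. That is structurally impossible: (i) is a restriction on the module $\frak X$, and Corollary~\ref{cor2} deliberately imposes none (this is precisely what distinguishes it from Corollary~\ref{cor1}, where (i) is bought with the assumption that $\frak X$ contains no complemented copies of $\ell_1$). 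Concretely, (i) can fail while every hypothesis of Corollary~\ref{cor2} holds: take $X = \ell_2$ and let $\frak X$ be the Fremlin projective tensor product $\cal A^r(\ell_2)\otimes_{|\pi|}\ell_1$, with $\cal L^r(\ell_2)$ acting on the left factor, so that $\frak X'\cong\cal L^r(\cal A^r(\ell_2),\ell_\infty)$. Here $e$ acts as the identity, and $\sup_n e_nx' = ex'$, $\sup_n x'e_n = x'e$ for $x'\geq 0$, because $be_n\uparrow b$ and $e_nb\uparrow b$ in norm (hence in order) for every $0\leq b\in\cal A^r(\ell_2)$, and positive operators preserve suprema of norm-convergent increasing sequences. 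Yet the normalized positive functionals $\xi_n$ defined by $\xi_n(b\otimes y) := x_n^*(b(x_i))\,y_n$ lie in $q_i\frak X'q_n$ and satisfy $\big|\sum_{n\leq N}c_n\xi_n\big| = \sum_{n\leq N}|c_n|\xi_n$ with $\big\|\sum_{n\leq N}c_n\xi_n\big\|_r = \max_{n\leq N}|c_n|$, so $(\xi_n)$ is $1$-equivalent to the unit vector basis of $c_0$ and (i) fails. Hence no argument can derive (i) from the hypotheses of the corollary; for such modules the conclusion holds only through (iii).

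The same example pinpoints where your estimates break down: the predual witnesses $x_n := (x_i^*\otimes x_n)\otimes f_n$ (with $(f_n)$ the unit vectors of $\ell_1$) satisfy $\big\|\sum_{n\leq N}x_n\big\|\geq N$, whereas your hoped-for varying-profile bound would give at most a constant times $\varphi(N) = \sqrt{N}$; so the ``disjoint-sequence estimate'' that you yourself identify as the technical heart is simply false, and no permutation symmetrization can repair it, because nothing transports the geometry of $X$ into the corners of an abstract module. Two further gaps: the assertion that the $x_n$ lie in ``pairwise disjoint bands $q_n\frak X$'' is unjustified -- the module axioms only give $|ax|\leq |a||x|$, and corner subspaces of an abstract Riesz module need not be lattice-disjoint; and your own concession that the non-reflexive case needs ``an argument genuinely different'' already leaves the corollary unproved for $X = c_0$, whose unit vector basis is symmetric and shrinking and is covered by the statement. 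The repair is to use symmetry where the paper does: Johnson's averaging over the symmetry group of the basis yields innerness of bounded derivations from $\cal A^r(X)$ into dual modules, i.e.\ (iii); once $D(a) = ax'-x'a$, one gets the required order bounds directly, e.g.\ $\sum_{i=1}^n |p_iD(p_i)\pi_n|\leq \pi_n|x'|\pi_n$, which is how the paper concludes.
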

	
	\begin{proof}
		If the basis $(x_i)$ is symmetric and shrinking, then every bounded derivation from $\cal A^r(X)$ into a dual Banach lattice module is inner (the proof of this is basically the same as that of \cite[Proposition~6.1]{Jo} -- the argument given in the latter reference is for $X = \ell_p$ $(1<p<\infty)$ and $\cal A = \cal A(X)$ ($:=$ the norm-closure of $\cal F(X)$ in $\cal B(X)$), but apart from some trivial changes, it works verbatim for any Banach space with a symmetric basis and $\cal A = \cal A^r(X)$), so (iii) holds. The other hypotheses of Theorem~\ref{atomicmain} can be verified as in the proof of Corollary~\ref{cor1}.
	\end{proof}
	
	\begin{rem}
		It is also true that every bounded derivation from $\cal A^r(\ell_1)$ to a dual Banach $\cal A^r(\ell_1)$-module is inner. However, as the unit vector basis of $\ell_1$ is not shrinking, the argument of the proof of Theorem~\ref{atomicmain} does not apply to this case.  
	\end{rem}
	
	\begin{cor}\label{cor3}
		Let $X$, $(x_i)$ and $(q_i)$ be as in Theorem~\ref{atomicmain}. Let $\frak X$ be a Banach lattice $\cal A^r(X)$-module such that $(xe_n)$ and $(e_nx)$ converge for every $x\in\frak X$. Furthermore, suppose either $\frak X$ contains no complemented copies of $\ell_1$, or $X$ is reflexive and $\dim{q_i\frak X'q_j}<\infty$ $(i,j\in\Bbb N)$. Then every bounded derivation $D:\cal A^r(X)\to\frak X'$ is regular.	
	\end{cor}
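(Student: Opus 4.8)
The plan is to verify, with $\cal A=\cal A^r(X)$, the complete list of hypotheses of Theorem~\ref{atomicmain}; the argument then runs parallel to that of Corollary~\ref{cor1}, the only genuinely new point being that here the conditions on the projections $E,F$ and the approximation identities for the module products must be \emph{extracted} from the convergence of $(e_nx)$ and $(xe_n)$ rather than assumed outright. First I would record the structural facts about $\cal A^r(X)$: as noted in Section~2 it is a closed Riesz algebra ideal of $\cal L^r(X)$, and since $(x_i)$ is shrinking (being as in Theorem~\ref{atomicmain}) the natural projections $(e_n)$ form a norm-bounded two-sided approximate identity for $\cal A^r(X)$. Cohen's factorization theorem then gives $\cal A^r(X)\cal A^r(X)=\cal A^r(X)$, so $\cal A=\cal A^r(X)$ meets the algebraic requirements of the theorem.

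The crux is the second item. Using $q_iq_j=\delta_{ij}q_i$ one checks $e_me_n=e_{\min(m,n)}$, and from this, together with the continuity of the module products, the hypothesis that $(e_nx)$ and $(xe_n)$ converge shows that $Px:=\lim_n e_nx$ and $Qx:=\lim_n xe_n$ are well-defined bounded projections on $\frak X$. Passing to adjoints, for $x'\in\frak X'$ and $x\in\frak X$ one has $(e_nx')(x)=x'(xe_n)\to x'(Qx)$ and $(x'e_n)(x)=x'(e_nx)\to x'(Px)$, so $e_nx'\to Q'x'$ and $x'e_n\to P'x'$ weak$^*$. For $x'\in\frak X'_+$ the sequences $(e_nx')$ and $(x'e_n)$ increase (because $e_{n+1}-e_n=q_{n+1}\geq 0$ and the module products preserve positivity), so their suprema coincide with these weak$^*$-limits; hence $E=Q'$ and $F=P'$. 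Being adjoints of bounded projections, $E$ and $F$ have weak$^*$-closed ranges, namely $(\ker Q)^\perp$ and $(\ker P)^\perp$, which is exactly the second item. I expect this identification of $E$ and $F$ with the adjoints of the module-level projections $Q$ and $P$ to be the main obstacle; the remaining verifications are routine.

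For the third item, the approximate-identity property gives $e_na\to a$ and $ae_n\to a$ in $\cal A^r(X)$, whence $\|e_nax'-ax'\|\leq M\|e_na-a\|\,\|x'\|\to 0$, and likewise $x'ae_n\to x'a$, in norm, for all $a\in\cal A$ and $x'\in\frak X'$. When $a\in\cal A_+$ and $x'\in\frak X'_+$ these norm-convergent sequences are increasing, so $\sup_n e_nax'=ax'$ and $\sup_n x'ae_n=x'a$, as required. Finally, exactly as in Corollary~\ref{cor1}, the dichotomy in the statement furnishes the last item: if $\frak X$ contains no complemented copy of $\ell_1$ then, by \cite[Chapter~V, Theorem~10]{Di}, no sequence drawn from the blocks $q_n\frak X'q_i$ or $q_i\frak X'q_n$ has a subsequence equivalent to the unit vector basis of $c_0$, which is condition~(i); while if $X$ is reflexive with $\dim q_i\frak X'q_j<\infty$ then condition~(ii) holds verbatim. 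With every hypothesis of Theorem~\ref{atomicmain} in place, the regularity of $D$ follows.
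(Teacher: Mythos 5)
Your proposal is correct and follows essentially the same route as the paper's own proof: the paper likewise defines the bounded maps $x\mapsto\lim_n xe_n$ and $x\mapsto\lim_n e_nx$ on $\frak X$, identifies $E$ and $F$ with their adjoints to obtain weak*-closed ranges, and uses that $(e_n)$ is a norm-bounded approximate identity for $\cal A^r(X)$ to get the supremum conditions, with the $\ell_1$/reflexivity dichotomy handled exactly as in Corollary~\ref{cor1}. The only difference is that you make explicit some points the paper leaves implicit (that the limit maps are projections, the increasing-sequence argument identifying suprema with weak*-limits, and Cohen's factorization for $\cal A\cal A=\cal A$), which is a welcome, not a divergent, addition.
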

	
	\begin{proof}
		It follows readily from our hypotheses that the maps $E_0:\frak X\to\frak X$, $x\mapsto\lim_n xe_n$ and $F_0:\frak X\to\frak X$, $x\mapsto\lim_n e_nx$, are well-defined bounded linear maps. Thus, if $E$ and $F$ are as in Theorem~\ref{atomicmain} then $E = E_0'$ and $F = F_0'$, so $E(\frak X')$ and $F(\frak X')$ are both weak*-closed. To see $\sup_n e_nax' = ax'$ and $\sup_n x'ae_n = x'a$ $(a\in\cal A_+,\, x'\in\frak X'_+)$, simply note that $(e_n)$ is a norm-bounded a.i.~for $\cal A^r(X)$, so $\lim_n \|e_nax'-ax'\| = 0 = \lim_n \|x'ae_n-x'a\|$ $(a\in\cal A,\, x'\in\frak X')$. As for the other hypotheses of Theorem~\ref{atomicmain}, it is easy to see that they are all satisfied, so the desired conclusion follows.
	\end{proof}

	\section{The non-atomic case}
	
	As announced in the introduction, in the last section of this note, we extend our previous results to algebras of regular operators defined on non-atomic Banach lattices. Before we can state the main result of the section (Theorem~\ref{nonatomicmain} below), we recall some notions, first introduced in~\cite{Bl2}. 
	
	Following \cite{Bl2}, if $X$ is a Banach lattice and $\Pi\subset\cal L^r(X)$ is a bounded family of positive projections such that $\sigma(X)$ is a Riesz subspace of $X$ for every $\sigma\in\Pi$ and $\overline{\bigcup_{\sigma\in\Pi} \sigma(X)} = X$, then we shall call $\Pi$ a {\bf bounded generating system for $X$}. If $\{\sigma(X) : \sigma\in\Pi\}$, partially ordered by inclusion, is a directed set, we shall say $\Pi$ is {\bf directed}, and think of it as a directed set with respect to the ordering: $\sigma_1\curlyeqprec\sigma_2$ $\iff$ $\sigma_1(X)\subseteq \sigma_2(X)$ $(\sigma_1,\sigma_2\in\Pi)$. We shall write $p_\sigma$ for $\sigma|^{\sigma(X)}$ ($:=$ corestriction of $\sigma$ to $\sigma(X)$) and $\imath_{\sigma(X)}$ for the natural inclusion of $\sigma(X)$ into $X$ $(\sigma\in\Pi)$. Note that $\imath_\sigma p_\sigma = \sigma$ and $p_\sigma\imath_\sigma = \mathrm{id}_{\sigma(X)}$ $(\sigma\in\Pi)$. The class of Banach lattices with directed bounded generating systems includes $L^p$-spaces, rearrangement invariant spaces on $[0,\infty)$ (see \cite[Section~3]{Bl2}), and of course, 1-unconditional direct sums of sequences of Banach lattices of any of these kinds. 
	
	We shall write $\frak F_X^{\star,\mu}$ for the collection of all vector sublattices of $X$, isometrically lattice isomorphic to Banach lattices of the form $(\bigoplus_{i=1}^{\frak m} F_i)_\mathbf{e}$, where $(F_i)_{i=1}^{\frak m}$ is a sequence of infinite dimensional separable purely atomic Banach lattices satisfying ($\star$) for the same constant $\mu$ and $\mathbf{e}$ is a 1-unconditional basis with $|\mathbf{e}| = {\frak m}$.
	
	Given a Banach lattice $X$, a closed Riesz algebra ideal $\cal A$ of $\cal L^r(X)$ and a positive projection $\sigma$ onto a Riesz subspace of $X$, we shall denote by $\cal A_\sigma$ the closed Riesz algebra ideal of $\cal L^r(\sigma(X))$ given by 
	\[
	\{a\in\cal L^r(\sigma(X)) : \imath_\sigma ap_\sigma\in\cal A\}. 
	\]
	If $\frak X$ is an $\cal A$-module, we shall denote by $\frak X_\sigma$ the Banach lattice $\cal A_\sigma$-module whose underlying space is $\frak X$ itself and whose module products are given by 
	\[
	\cal A_\sigma\times\frak X\to\frak X,\; (a,x)\mapsto (\imath_\sigma ap_\sigma) x 
	\quad\text{and}\quad 
	\frak X\times\cal A_\sigma\to\frak X,\; (x,a)\mapsto x(\imath_\sigma ap_\sigma). 
	\]
	Clearly, if $\frak X$ is a Riesz $\cal A$-module, then $\frak X_\sigma$ is a Riesz $\cal A_\sigma$-module. To simplify notations, we shall denote the left and right module products of an element $x\in\frak X_\sigma$ with an element $a\in\cal A_\sigma$, by $ax$ and $xa$, respectively. Note that $\|ax\|\leq \|\sigma\|\|a\|\|x\|$ and $\|xa\|\leq \|\sigma\|\|a\|\|x\|$ $(a\in\cal A_\sigma,\, x\in\frak X_\sigma)$, so $\|x'a\|\leq \|\sigma\|\|a\|\|x'\|$ and $\|ax'\|\leq \|\sigma\|\|a\|\|x'\|$ $(a\in\cal A_\sigma,\, x'\in (\frak X_\sigma)')$.
	
	The main result of the section can now be stated as follows.
	
	\begin{thm}\label{nonatomicmain}
		Let $X$ be a Banach lattice with a directed bounded generating system $\Pi$ such that $\{\sigma(X) : \sigma\in\Pi\}\subseteq\frak F_X^{\star,\mu}$ for some $\mu\geq 1$. For each $\sigma\in\Pi$, let $(x_i^{(\sigma)})$ be a basis for $\sigma(X)$ satisfying $(\star)$ with constant $\mu$ in each summand, and let $(e_n^{(\sigma)})$ be the corresponding sequence of natural projections. Let $\cal A$ be a closed Riesz algebra ideal of $\cal L^r(X)$ and let $\frak X$ be a Banach lattice $\cal A$-module. Suppose, for each $\sigma\in\Pi$, \vspace{1mm}
		\begin{itemize}
			\item[--] $(x_i^{(\sigma)})$ is shrinking; 
			
			\item[--] $\cal A_\sigma\cal A_\sigma = \cal A_\sigma$;
						
			\item[--] the maps defined by $E_\sigma x' = \sup_n e_n^{(\sigma)}x'$ and $F_\sigma x' = \sup_n x'e_n^{(\sigma)}$ $(x'\in\frak X'_+)$, have weak*-closed ranges; 
			
			\item[--] $\sup_n e_n^{(\sigma)}ax' = ax'$ and $\sup_n x'ae_n^{(\sigma)} = x'a$ $(a\in (\cal A_\sigma)_+,\, x'\in (\frak X'_\sigma)_+)$;
		\end{itemize}		
		and suppose at least one of the following holds: 
		\begin{itemize}
			\item[$\bullet$] for each $\sigma\in\Pi$, the pair $\cal A_\sigma$, $\frak X_\sigma$ satisfies either $(i)$ or $(ii)$ of Theorem~\ref{atomicmain}; or 
			
			\item[$\bullet$] every bounded derivation from $\cal A^r(X)$ to $\frak X'$ is inner.  	
		\end{itemize} \vspace{1mm}
		Then every bounded derivation $D:\cal A\to\frak X'$, such that $w^*\text{-}\lim_\sigma D(\sigma a\sigma) = D(a)$ $(a\in\cal A)$, is regular.
	\end{thm}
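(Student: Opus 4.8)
The plan is to reduce Theorem~\ref{nonatomicmain} to the atomic case by localising $D$ at each $\sigma\in\Pi$, applying Theorem~\ref{atomicmain} there, and then gluing the resulting local moduli with the help of the directed structure of $\Pi$ and the hypothesis $w^*\text{-}\lim_\sigma D(\sigma a\sigma)=D(a)$. For the localisation, since $p_\sigma\imath_\sigma=\mathrm{id}_{\sigma(X)}$, the map $a\mapsto\imath_\sigma a p_\sigma$ is a bounded, positive algebra homomorphism $\cal A_\sigma\to\cal A$, so
\[
D_\sigma:\cal A_\sigma\to(\frak X_\sigma)',\qquad D_\sigma(a):=D(\imath_\sigma a p_\sigma),
\]
is well defined. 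A direct check, using that the $\cal A_\sigma$-module action of $a$ on $(\frak X_\sigma)'$ coincides with the $\cal A$-action of $\imath_\sigma a p_\sigma$, shows $D_\sigma$ is a bounded derivation; and since $(\frak X_\sigma)'$ and $\frak X'$ share the same underlying Banach lattice (only the module action differs), their moduli coincide.

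Next I would verify that $(\cal A_\sigma,\frak X_\sigma,D_\sigma)$ meets the hypotheses of Theorem~\ref{atomicmain}: $\sigma(X)\in\frak F_X^{\star,\mu}$ supplies an atomic Banach lattice with a basis satisfying $(\star)$, and the four bulleted conditions (shrinking basis, weak*-closed ranges of $E_\sigma,F_\sigma$, the two suprema identities, and $\cal A_\sigma\cal A_\sigma=\cal A_\sigma$) are precisely what is assumed per $\sigma$. In the first global alternative, (i) or (ii) is assumed for each $\sigma$ and nothing more is needed; in the innerness alternative one must \emph{deduce} the local condition (iii) for $\cal A^r(\sigma(X))\to(\frak X_\sigma)'$ from the global innerness of derivations $\cal A^r(X)\to\frak X'$. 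This is a genuine step: an inflated bounded derivation on $\cal A^r(\sigma(X))$ lives only on the corner $\imath_\sigma\cal A^r(\sigma(X))p_\sigma$ of $\cal A^r(X)$, so I would first extend it to all of $\cal A^r(X)$ (the inflated natural projections $\imath_\sigma e_n^{(\sigma)}p_\sigma$ form a bounded approximate identity for that corner) before invoking global innerness and restricting back. With (i), (ii) or (iii) available, Theorem~\ref{atomicmain} yields that each $D_\sigma$ is regular, i.e.\ $|D_\sigma|$ exists. Writing $a_\sigma:=p_\sigma a\imath_\sigma\in\cal A_\sigma$ for $a\in\cal A$, one has $\imath_\sigma a_\sigma p_\sigma=\sigma a\sigma$, hence $D_\sigma(a_\sigma)=D(\sigma a\sigma)$; and since $\imath_\sigma,p_\sigma\geq 0$, $|t|\leq a$ forces $|t_\sigma|\leq a_\sigma$, so that
\[
|D(\sigma t\sigma)|=|D_\sigma(t_\sigma)|\leq |D_\sigma|(a_\sigma)\qquad(t\in\cal A,\ |t|\leq a,\ \sigma\in\Pi).
\]

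The final, and hardest, step is to glue these estimates into order-boundedness of $\{|D(t)|:|t|\leq a\}$ for each $a\in\cal A_+$, which (as $\frak X'$ is Dedekind complete) is equivalent to the regularity of $D$. Fixing $a$, I would show that the net $\big(|D_\sigma|(a_\sigma)\big)_{\sigma\in\Pi}$ admits a supremum $z_a\in\frak X'_+$: here I would exploit the compatibility of the directed system (for $\sigma_1\curlyeqprec\sigma_2$ one has $\sigma_2\sigma_1=\sigma_1$, together with the finer structure of a directed bounded generating system) to render the net increasing along $\curlyeqprec$, and combine a uniform norm bound $\sup_\sigma\|\,|D_\sigma|(a_\sigma)\|<\infty$ with the Levi property of the dual Banach lattice $\frak X'$. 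Granting $z_a=\sup_\sigma|D_\sigma|(a_\sigma)$, the displayed estimate gives $|D(\sigma t\sigma)|\leq z_a$ for all $\sigma$; since the order interval $[-z_a,z_a]$ is weak*-closed (as $\frak X'_+$ is) and $D(\sigma t\sigma)\overset{w^*}{\to}D(t)$ by hypothesis, we conclude $|D(t)|\leq z_a$ whenever $|t|\leq a$, so $\{D(t):|t|\leq a\}$ is order bounded and $D$ is regular. I expect the production of the single dominating element $z_a$—that is, controlling the local moduli uniformly across the directed system, where monotonicity along $\curlyeqprec$ is delicate because the projections need not commute—to be the main obstacle.
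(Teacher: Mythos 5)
Your skeleton matches the paper's strategy (localise $D$ to $D_\sigma(a):=D(\imath_\sigma a p_\sigma)$, note $|D(\sigma t\sigma)|=|D_\sigma(p_\sigma t\imath_\sigma)|\leq |D_\sigma|(p_\sigma a\imath_\sigma)$ when $|t|\leq a$, then pass to the limit using $w^*\text{-}\lim_\sigma D(\sigma a\sigma)=D(a)$ and the weak*-closedness of $\frak X'_+$), and your final step is exactly right. But there is a genuine gap at the point you yourself flag as ``the main obstacle'': you need $\sup_{\sigma\in\Pi}\big\||D_\sigma|(p_\sigma a\imath_\sigma)\big\|<\infty$, and nothing in your proposal produces it. Applying Theorem~\ref{atomicmain} as a black box to each pair $(\cal A_\sigma,\frak X_\sigma)$ only gives the \emph{existence} of $|D_\sigma|$, with no control whatsoever on its norm; a priori $\||D_\sigma|\|$ can degenerate along $\Pi$. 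The paper obtains uniformity by re-entering the \emph{proofs} of Theorems~\ref{atomicmain} and~\ref{32like}: it chooses the idempotents $(p_i^{(\sigma)})$ so that $\|\xi^{(\sigma)}\|,\|\eta^{(\sigma)}\|\leq K$ with $K$ independent of $\sigma$ (in cases (i)/(ii) one can make $\sum_{i}\sum_{j}\|p_i^{(\sigma)}D_\sigma(p_i^{(\sigma)})p_j^{(\sigma)}\|$ as small as one wishes; in the inner case $K=\sup_\sigma\|\sigma\|^2\|x_D'\|$ works), chooses the factorization with $\sup_n\|u_n^{(\sigma)}\|\leq\mu$ and $\sup_n\|v_n^{(\sigma)}\|\leq\mu$, and then reads off the explicit estimate $\||D_\sigma|(p_\sigma a\imath_\sigma)\|\leq\big[2\mu^4\|\sigma\|^2\|D\|+\mu^4\|\sigma\|^3(4K+\|D\|\|\sigma\|)\big]\|a\|$ from the representation formula (\ref{Formula}) for $D_\sigma$, which is uniform because $\Pi$ is norm-bounded. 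This quantitative refinement is the actual content of the proof, and it cannot be recovered from the statement of Theorem~\ref{atomicmain} alone.

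Two further points. First, your gluing mechanism is both unjustified and unnecessary: the net $\big(|D_\sigma|(p_\sigma a\imath_\sigma)\big)_\sigma$ has no reason to be increasing along $\curlyeqprec$ (the module actions change with $\sigma$ and the local moduli are not comparable), and the Levi property only applies to increasing norm-bounded families; the paper never forms $\sup_\sigma$ at all, but instead extracts a weak*-convergent subnet of the norm-bounded net (Alaoglu) and lets weak*-closedness of $\frak X'_+$ transport the domination $|D(\sigma t\sigma)|\leq\xi_\sigma(a)$ to the limit. Once the uniform norm bound is in hand, this makes the ``delicate monotonicity'' issue you worry about simply disappear. Second, under the innerness alternative, your plan to deduce local condition (iii) by extending an inflated derivation from the corner $\sigma\cal A^r(X)\sigma$ to all of $\cal A^r(X)$ is dubious: a corner is not an ideal, and a bounded approximate identity for the corner does not yield an extension of a derivation. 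The paper sidesteps this entirely: only the given $D_\sigma$ needs to be handled, and if $D(a)=ax_D'-x_D'a$ on $\cal A^r(X)$, then by direct restriction $D_\sigma(a)=ax_D'-x_D'a$ on $\cal A^r(\sigma(X))$ (with the $\frak X_\sigma$-module actions); the proof of Theorem~\ref{atomicmain} in case (iii) uses only innerness of the derivation at hand, not innerness of all derivations.
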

	
	\begin{rem}
		Regarding the assumption that $\{\sigma(X) : \sigma\in\Pi\}\subseteq \frak F_X^{\star,\mu}$, all examples of Banach lattices, given at the end of the second paragraph of this section, have directed bounded generating systems satisfying this condition (see \cite[Section~3]{Bl2}). 
	\end{rem}
	
	\begin{proof}[Proof of the theorem]
		Let $D:\cal A\to\frak X'$ be a bounded derivation as in the hypotheses of the theorem. We first construct, for each $a\in\cal A_+$, a norm-bounded net $(\xi_\sigma(a))\subset\frak X'$ such that $\sup_{t\in\cal A : |t|\leq a} |D(\sigma t\sigma)|\leq \xi_\sigma(a)$. To this end, fix $\sigma\in\Pi$ and let $D_\sigma:\cal A_\sigma\to(\frak X_\sigma)'$, $a\mapsto D(\imath_\sigma ap_\sigma)$. Clearly, $D_\sigma$ is a continuous derivation. 
		
		The proof of Theorem~\ref{atomicmain} yields a sequence $(p_i^{(\sigma)})$ of mutually orthogonal o-minimal idempotents in $\cal A_\sigma$ satisfying the hypotheses of Theorem~\ref{32like}. Let $\pi_n^{(\sigma)} = \sum_{i=1}^n p_i^{(\sigma)}$ $(n\in\Bbb N)$. It is apparent from the proof of Theorem~\ref{atomicmain} that the sequence $(p_i^{(\sigma)})$ can be chosen so that
		\[
		\max\bigg\{\bigg\|\sup_n \sum_{i=1}^n |p_i^{(\sigma)} D_\sigma(p_i^{(\sigma)}) \pi_n^{(\sigma)}|\bigg\|, 
		\bigg\|\sup_n \sum_{i=1}^n |\pi_n^{(\sigma)} D_\sigma(p_i^{(\sigma)}) p_i^{(\sigma)}|\bigg\|\bigg\}\leq
		K, 
		\]
		for some constant $K$ independent of $\sigma$. (Indeed, if the pair $\cal A_\sigma$, $\frak X_\sigma$ satisfies either (i) or (ii) of Theorem~\ref{atomicmain}, then it should be clear from the proof of the same theorem that for any $K>0$ one can choose $(p_i^{(\sigma)})$ so that $\sum_i \sum_j \|p_i^{(\sigma)} D_\sigma(p_i^{(\sigma)}) p_j^{(\sigma)}\|\leq K$; while if $D(a) = ax'_D-x'_Da$ $(a\in\cal A^r(X))$ for some $x_D'\in\frak X'$, then $D_\sigma(a) = ax'_D-x'_Da$ $(a\in\cal A^r(\sigma(X)))$, so $\sup_n \|\pi_n^{(\sigma)} x'_D\pi_n^{(\sigma)}\|\leq \|\sigma\|^2\|x'_D\|$ and one can take as $K$ this last value.) It is also clear from the proof of Theorem~\ref{atomicmain} that the sequences $(u_n^{(\sigma)})$ and $(v_n^{(\sigma)})$, of the factorization of $(e_n^{(\sigma)})$ through $(\pi_n^{(\sigma)})$, can be chosen so that
		\[
		\sup_n \|u_n^{(\sigma)}\|\leq \mu
		\quad\text{ and }\quad
		\sup_n \|v_n^{(\sigma)}\|\leq \mu.
		\]
		
		Let $\tau_\sigma$ be the topology on $\cal A_\sigma$, generated by the seminorms $a\mapsto\|ab\|+\|ba\|$, $\cal A_\sigma\to\Bbb R_+\cup\{0\}$ $(b\in\cal F(\sigma(X)))$. To simplify notations, in what follows we write $\lim$ for $\tau_\sigma$-limit. From the proof of Theorem~\ref{32like} (see (\ref{Formula})), we know that
		\[
		D_\sigma(c) = \widetilde{x}_\sigma \lim_n \Psi_\sigma(c)v_n^{(\sigma)} +
		(\Theta_\sigma\circ \Phi_\sigma)(c) +
		\Big(\lim_m u_m^{(\sigma)}\Psi_\sigma(c)\Big) \widetilde{y}_\sigma
		\quad (c\in\cal A_\sigma),
		\]
		where $\widetilde{x}_\sigma$ and $\widetilde{y}_\sigma$ are weak*-limit points in $\frak X'$ of the sets $\{D_\sigma(u_n^{(\sigma)}) : n\in\Bbb N\}$ and $\{D_\sigma(v_n^{(\sigma)}) : n\in\Bbb N\}$, respectively; $\Phi_\sigma:\cal A_\sigma\to\cal A_\sigma$, $c\mapsto \lim_i \lim_j v_i^{(\sigma)}cu_j^{(\sigma)}$; $\Psi_\sigma:\cal A_\sigma\to\cal A_\sigma$, $c\mapsto \lim_r \lim_s \pi_{k_r}^{(\sigma)} (v_r^{(\sigma)}cu_s^{(\sigma)}) \pi_{k_s}^{(\sigma)}$; and
		\[
		\Theta_\sigma:\cal A_\sigma\to(\frak X_\sigma)',\, c\mapsto
		w^*\text{-}\lim_\alpha w^*\text{-}\lim_\beta \;u_{m_\alpha}^{(\sigma)} \Big(\lim_r \lim_s D_\sigma(\pi_{k_r}^{(\sigma)}c\pi_{k_s}^{(\sigma)})\Big) v_{n_\beta}^{(\sigma)},
		\]
		where $(u_{m_\alpha}^{(\sigma)})$ and $(v_{n_\beta}^{(\sigma)})$ are subnets of $(u_m^{(\sigma)})$ and $(v_n^{(\sigma)})$, respectively, such that $(u_{m_\alpha}^{(\sigma)}x')$ and $(x'v_{n_\beta}^{(\sigma)})$ weak*-converge in $\frak X'$ for every $x'\in\frak X'$.
		Thus, if $|t|\leq a\in\cal A_+$, then $|D(\sigma t\sigma)| = |D_\sigma(p_\sigma t\imath_\sigma)|\leq |D_\sigma|(p_\sigma a\imath_\sigma)$ and
		\begin{equation}\label{17}
			\begin{split}
				|D_\sigma|(p_\sigma a\imath_\sigma)&\leq
				|\widetilde{x}_\sigma| \lim_n \Psi_\sigma(p_\sigma a\imath_\sigma)v_n^{(\sigma)} \\
				&\hspace{2cm} + |\Theta_\sigma|(\Phi_\sigma(p_\sigma a\imath_\sigma)) +
				\Big(\lim_m u_m^{(\sigma)}\Psi_\sigma(p_\sigma a\imath_\sigma)\Big) |\widetilde{y}_\sigma|,
			\end{split}
		\end{equation}
		with
		\[
		|\Theta_\sigma|(c)\leq w^*\text{-}\lim_\alpha w^*\text{-}\lim_\beta\;
		\xi_{m_\alpha n_\beta}^{(\sigma)}(c) \quad (c\in\cal A_\sigma),
		\]
		where
		\[
		\begin{split}
			\xi_{mn}^{(\sigma)}(c) &= u_m^{(\sigma)} (\xi^{(\sigma)}+\eta^{(\sigma)}) cv_n^{(\sigma)} \\
			&\hspace{1cm} + u_m^{(\sigma)}
			\Big|D_\sigma\Big(\lim_k \lim_l \pi_k^{(\sigma)} c\pi_l^{(\sigma)}\Big)\Big| v_n^{(\sigma)} + u_m^{(\sigma)}c (\xi^{(\sigma)}+\eta^{(\sigma)}) v_n^{(\sigma)}
			\quad (m,n\in\Bbb N),
		\end{split}
		\]
		\[
		\xi^{(\sigma)} = \sup_n \sum_{i=1}^n |\pi_n^{(\sigma)} D_\sigma(p_i^{(\sigma)}) p_i^{(\sigma)}|
		\quad\text{and}\quad
		\eta^{(\sigma)} = \sup_n \sum_{i=1}^n |p_i^{(\sigma)} D_\sigma(p_i^{(\sigma)}) \pi_n^{(\sigma)}|
		\]
		(see the proof of Theorem~\ref{32like}).
		
		Note that if $(c_n)\subset\cal A_\sigma$ is $\tau_\sigma$-convergent, with $\tau_\sigma$-limit $c$, then $\|c\|\leq \sup_n \|c_n\|$, for $\|\lambda\|\|(c-c_n)(x)\|\leq \tau_{\lambda\otimes x}(c-c_n)$ $(x\in\sigma(X)_{[1]},\, \lambda\in\sigma(X)'_{[1]})$. Applying this fact in an iterative fashion, we find that for every $c\in\cal A_\sigma$,
		\[
		\big\|\Phi_\sigma(c)\big\|\leq
		\sup_{i,j} \big\|v_i^{(\sigma)}cu_j^{(\sigma)}\big\|\leq \mu^2\|c\|, 
		\]
		\[
		\big\|\Psi_\sigma(c)\big\|\leq
		\sup_{r,s} \big\|\pi_{k_r}^{(\sigma)}(v_r^{(\sigma)}cu_s^{(\sigma)})\pi_{k_s}^{(\sigma)}\big\|\leq \mu^2\|c\|,
		\]
		\[
		\Big\|\lim_n \Psi_\sigma(c)v_n^{(\sigma)}\Big\|
		\leq \sup_n \big\|\Psi_\sigma(c)v_n^{(\sigma)}\big\|\leq \mu^3\|c\|,
		\]
		and
		\[
		\Big\|\lim_m u_m^{(\sigma)}\Psi_\sigma(c)\Big\|
		\leq \sup_m \big\|u_m^{(\sigma)}\Psi_\sigma(c)\big\|\leq \mu^3\|c\|.
		\]
		As for $|\Theta_\sigma|$, recall that if $(x_\alpha')$ is a net in $\frak X'$ with weak*-limit $x'$ then $\|x'\|\leq \sup_\alpha \|x_\alpha'\|$, so 
		\[
		\begin{split}
			\big\||\Theta_\sigma|(c)\big\| &\leq
			\sup_{\alpha,\beta} \big\|\xi_{m_\alpha n_\beta}^{(\sigma)}(c)\big\| \\
			&\leq 2\mu^2 \|\sigma\|^2\|\xi^{(\sigma)}+\eta^{(\sigma)}\|\|c\| +
			\mu^2 \|\sigma\|^2\|D_\sigma\|\sup_{k,l} \|\pi_k^{(\sigma)}c\pi_l^{(\sigma)}\| \\
			&\leq \mu^2 \|\sigma\|^2\big(4K + \|D\|\|\sigma\|\big)\|c\| \quad (c\in\cal A_\sigma).
		\end{split}
		\]
		Clearly, 
		\[
		\|\widetilde{x}_\sigma\|\leq \sup_n \|D_\sigma(u_n^{(\sigma)})\|\leq \mu\|\sigma\|\|D\|
		\quad\text{and}\quad
		\|\widetilde{y}_\sigma\|\leq \sup_n \|D_\sigma(v_n^{(\sigma)})\|\leq \mu\|\sigma\|\|D\|.
		\]
		Combining all the above norm-estimates with (\ref{17}), one obtains that
		\[
		\begin{split}
			\big\||D_\sigma|(p_\sigma a\imath_\sigma)\big\|
			&\leq 2\mu^4\|\sigma\|\|D\|\|p_\sigma a\imath_\sigma\| \\
			&\hspace{2cm} + \mu^2\|\sigma\|^2\big(4K + \|D\|\|\sigma\|\big)
			\|\Phi_\sigma(p_\sigma a\imath_\sigma)\| \\
			&\leq \big[2\mu^4\|\sigma\|^2\|D\| + \mu^4\|\sigma\|^3(4K + \|D\|\|\sigma\|)\big]\|a\| \quad (\sigma\in\Pi).
		\end{split}
		\]
		We let $\xi_\sigma(a) := |D_\sigma|(p_\sigma a\imath_\sigma)$ $(\sigma\in\Pi)$. 
		
		Now, since the net $(\xi_\sigma(a))_\sigma$ is norm-bounded, it must contain a weak*-convergent subnet, $(\xi_{\sigma_\alpha}(a))_\alpha$ say. Let $\xi(a) := w^*\text{-}\lim_\alpha \xi_{\sigma_\alpha}(a)$. Then, for every $t\in\cal A$ such that $|t|\leq a$, combining the fact that $D(t) = w^*\text{-}\lim_\sigma D(\sigma t\sigma)$, with the weak*-closedness of $\frak X'_+$ and the inequalities $|D(\sigma t\sigma)|\leq \xi_\sigma(a)$ $(\sigma\in\Pi)$, we obtain that
		\[
		|D(t)|\leq w^*\text{-}\lim_\alpha \xi_{\sigma_{\kappa_\alpha}}(a) = \xi(a). 
		\]
		As $a\in\cal A_+$ was arbitrary, the regularity of $D$ follows readily from this. 
	\end{proof}
	
	As announced in the introduction, we conclude the section -- and the note -- by discussing some natural situations in which the hypotheses of Theorem~\ref{nonatomicmain} hold. 
	
	\begin{cor}\label{C1}
		Let $X$, $\Pi$, $(x_i^{(\sigma)})$ and $(e_n^{(\sigma)})$ $(\sigma\in\Pi)$ be as in Theorem~\ref{nonatomicmain}, and suppose in addition, that $\lim_\sigma \|\lambda-\lambda\sigma\| = 0$ $(\lambda\in X')$. Let $e := \mathrm{id}_X$ and let $\frak X$ be a Banach lattice $\cal L^r(X)$-module such that, 
		\begin{itemize}
			\item[--] $w^*\text{-}\lim_\sigma \sigma x' = ex'$ and $w^*\text{-}\lim_\sigma x'\sigma = x'e$ $(x'\in\frak X')$; and 
			
			\item[--] $w^*\text{-}\lim_n e_n^{(\sigma)}x' = \sigma x'$ and $w^*\text{-}\lim_n x'e_n^{(\sigma)} = x'\sigma$ $(x'\in\frak X',\, \sigma\in\Pi)$. 
		\end{itemize} 
		If in addition any of the following holds, then every bounded derivation $D:\cal L^r(X)\to\frak X'$ is regular:
		\begin{itemize} 
			\item[i)] $X$ contains no copies of $\ell_1$ and $\frak X$ contains no complemented copies of $\ell_1$;
			
			\item[ii)] $X$ is reflexive and $\dim{e_n^{(\sigma)}\frak X'e_n^{(\sigma)}}<\infty$ $(n\in\Bbb N,\,\sigma\in\Pi)$; or
			
			\item[iii)] $X$ contains no copies of $\ell_1$ and every bounded derivation from $\cal A^r(X)$ to $\frak X'$ is inner.
		\end{itemize}
	\end{cor}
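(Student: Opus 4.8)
The plan is to show that, with $\cal A = \cal L^r(X)$, all the hypotheses of Theorem~\ref{nonatomicmain} are met, and then that every bounded derivation $D:\cal L^r(X)\to\frak X'$ automatically satisfies the side condition $w^*\text{-}\lim_\sigma D(\sigma a\sigma) = D(a)$ $(a\in\cal L^r(X))$. Several of the structural hypotheses verify themselves: since $\cal A = \cal L^r(X)$ contains $\mathrm{id}_X$, one has $\cal A_\sigma = \cal L^r(\sigma(X))$ and hence $\cal A_\sigma\cal A_\sigma = \cal A_\sigma$ for every $\sigma\in\Pi$, while $(x_i^{(\sigma)})$ is shrinking by the standing assumptions inherited from Theorem~\ref{nonatomicmain}. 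For the weak*-closed range condition I would note that the second displayed hypothesis on $\frak X$ forces $E_\sigma x' = \sup_n e_n^{(\sigma)}x' = w^*\text{-}\lim_n e_n^{(\sigma)}x' = \sigma x'$ $(x'\in\frak X'_+)$, the middle equality because an increasing norm-bounded net in a dual Banach lattice has supremum equal to its weak*-limit. Thus $E_\sigma$ is the adjoint of the bounded projection $x\mapsto x\sigma$ on $\frak X$ (and $F_\sigma$ the adjoint of $x\mapsto\sigma x$), so both have weak*-closed ranges. The module approximate-identity condition follows from the same computation together with $\sigma\imath_\sigma = \imath_\sigma$: for $a\in(\cal A_\sigma)_+$ the element $ax' = (\imath_\sigma ap_\sigma)x'$ is positive and $\sup_n e_n^{(\sigma)}(ax') = \sigma(ax') = (\sigma\imath_\sigma ap_\sigma)x' = ax'$, and symmetrically on the right using $p_\sigma\imath_\sigma = \mathrm{id}_{\sigma(X)}$.

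Next I would verify the dichotomy of Theorem~\ref{nonatomicmain} case by case. In case (i), the absence of complemented copies of $\ell_1$ in $\frak X$ gives, by \cite[Chapter~V, Theorem~10]{Di}, that $\frak X'$ contains no copy of $c_0$; since $\frak X_\sigma' = \frak X'$ as Banach lattices for every $\sigma$, condition (i) of Theorem~\ref{atomicmain} holds for each pair $\cal A_\sigma,\frak X_\sigma$. In case (ii), reflexivity of $X$ makes each complemented subspace $\sigma(X)$ reflexive, and since $e_n^{(\sigma)}\frak X'e_n^{(\sigma)} = \bigoplus_{i,j\le n} q_i^{(\sigma)}\frak X'q_j^{(\sigma)}$, finite-dimensionality of the left-hand side is equivalent to $\dim q_i^{(\sigma)}\frak X'q_j^{(\sigma)}<\infty$, which is condition (ii) of Theorem~\ref{atomicmain}. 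In case (iii), the assumption that every bounded derivation from $\cal A^r(X)$ to $\frak X'$ is inner is exactly the second alternative of the dichotomy. Hence in each case the relevant alternative is satisfied.

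The main obstacle is the side condition $w^*\text{-}\lim_\sigma D(\sigma a\sigma) = D(a)$, which must hold for \emph{every} bounded derivation $D$ and \emph{every} $a\in\cal L^r(X)$ — not merely on a dense subalgebra, since $\cal F(X)$ need not be dense in $\cal L^r(X)$. I would expand, via the derivation identity,
\[
D(\sigma a\sigma) = D(\sigma)\,a\sigma + \sigma D(a)\sigma + \sigma a\,D(\sigma),
\]
and aim to show, weak*, that the two outer terms tend to $0$ and the middle term to $D(a)$. Pairing against $x\in\frak X$ and transposing factors across the module actions reduces each term to a mixed limit $\langle\Lambda_\sigma,w_\sigma\rangle$, in which one factor converges weak* in $\frak X'$ (using $w^*\text{-}\lim_\sigma\sigma x' = x'$, $w^*\text{-}\lim_\sigma x'\sigma = x'$, together with $\lim_\sigma\|\lambda-\lambda\sigma\| = 0$ to handle the finite-rank parts, which in particular yields $w^*\text{-}\lim_\sigma D(\sigma) = 0$), while the other converges weakly in $\frak X$ (the hypotheses give $\sigma x\to x$ and $x\sigma\to x$ weakly in $\frak X$).

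Such a mixed limit does \emph{not} vanish for arbitrary Banach spaces — the $\ell_1$/$c_0$ duality is precisely the obstruction — and this is exactly the point at which ``$X$ contains no copies of $\ell_1$'' (present in all three cases, via reflexivity in (ii)) becomes indispensable. The plan is to use the no-$\ell_1$ hypothesis to secure the relative weak compactness needed to invoke Grothendieck's double-limit (interchange-of-limits) criterion, forcing the outer cross terms to $0$ and the middle term to $D(a)$. I expect the careful bookkeeping of these interchanged weak and weak* limits, rather than any single estimate, to be the hard part of the argument; once it is in place, the regularity of $D$ is immediate from Theorem~\ref{nonatomicmain}.
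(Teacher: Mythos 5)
Your verification of the structural hypotheses of Theorem~\ref{nonatomicmain} is essentially correct and matches the paper: $\cal A_\sigma = \cal L^r(\sigma(X))$ gives $\cal A_\sigma\cal A_\sigma = \cal A_\sigma$; $E_\sigma$ and $F_\sigma$ are weak*-continuous projections (your route via adjoints of $x\mapsto x\sigma$ and $x\mapsto \sigma x$ is equivalent to the paper's appeal to weak*-continuity of the module actions), hence have weak*-closed ranges; and your case-by-case treatment of (i)--(iii) is the paper's. One slip: the shrinking of $(x_i^{(\sigma)})$ is \emph{not} inherited from the statement of Theorem~\ref{nonatomicmain}; in the paper it is deduced, in each of the cases (i)--(iii), from the no-$\ell_1$ (or reflexivity) assumption on $X$ via \cite[Theorem~1.c.9]{LT} --- that is in fact the principal role of that assumption.

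The genuine gap is in the second half, the proof that $w^*\text{-}\lim_\sigma D(\sigma a\sigma) = D(a)$, and it is twofold. First, the limits you announce are wrong in general, because nothing in the hypotheses makes $\frak X'$ a unital module: the hypotheses only identify $w^*\text{-}\lim_\sigma \sigma x'$ with $ex'$, not with $x'$. The correct targets are $D(\sigma)a\sigma\to D(e)a$, $\sigma D(a)\sigma\to eD(a)e = D(a)-D(e)a-aD(e)$, and $\sigma aD(\sigma)\to aD(e)$ (whose sum is indeed $D(a)$), not $0$, $D(a)$, $0$. In particular your claim that $w^*\text{-}\lim_\sigma D(\sigma) = 0$ is false: already for an inner derivation $D(t) = tx'-x't$ one gets $w^*\text{-}\lim_\sigma D(\sigma) = ex'-x'e$, which need not vanish. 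Second, the mechanism you propose for the mixed limits --- relative weak compactness extracted from ``$X$ contains no copies of $\ell_1$'' plus Grothendieck's double-limit criterion --- is unsubstantiated and misdiagnoses where that hypothesis enters: the pairings at issue live in $\frak X\times\frak X'$, and the absence of $\ell_1$ in $X$ yields no weak compactness there; in the paper this hypothesis plays no part in the convergence step at all. The paper instead uses the two-term decomposition $D(\sigma a\sigma) = \sigma D(a\sigma) + D(\sigma)a\sigma$, introduces the auxiliary Hausdorff topology $\omega$ on $\frak Y := e\frak X'+\frak X'e$ generated by the seminorms $\omega_{\mathbf{b},\mathbf{f}}$ (with $b_1,b_2\in\cal F(X)$), and proves $\omega$-convergence by pushing $D$, via the derivation identity, onto expressions like $b_1\sigma(a\sigma-a)$ and $(\sigma-e)ab_2$, which converge in \emph{norm} because $\lim_\sigma\|\lambda-\lambda\sigma\| = 0$ and $\lim_\sigma\|x-\sigma x\| = 0$ hold against finite-rank operators; norm-boundedness of the nets, weak*-closedness of $e\frak X'$ and $\frak X'e$, and Hausdorffness of $\omega$ on $\frak Y$ then upgrade the $\omega$-limits to weak* limits. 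Without this device, or a genuine substitute for your compactness claim, your argument does not close.
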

	
	\begin{proof}
		To simplify notations, we set $\cal A := \cal L^r(X)$. Let $\frak X$ be as in the hypotheses. We first notice that every pair $\cal A_\sigma$, $\frak X_\sigma$ satisfies the requirements of Theorem~\ref{nonatomicmain}. Indeed, for every $\sigma\in\Pi$, one has $\cal A_\sigma = \cal L^r(\sigma(X))$ so $\cal A_\sigma\cal A_\sigma = \cal A_\sigma$; $(x_i^{(\sigma)})$ is shrinking,  for $\sigma(X)$ contains no copies of $\ell_1$ (see \cite[Theorem~1.c.9]{LT}); $E_\sigma(\frak X')$ and $F_\sigma(\frak X')$ are weak*-closed, for $E_\sigma x' = w^*\text{-}\lim_n e_n^{(\sigma)}x' = \sigma x'$ and $F_\sigma x' =  w^*\text{-}\lim_n x'e_n^{(\sigma)} = x'\sigma$ $(x'\in\frak X')$, and multiplication by elements of $\cal A$ is weak*-weak*-continuous on $\frak X'$; also $\sup_n e_n^{(\sigma)}ax' = ax'$ and $\sup_n x'ae_n^{(\sigma)} = x'a$ $(a\in (\cal A_\sigma)_+,\,x'\in\frak X'_+)$, for $\sup_n e_n^{(\sigma)}x' = w^*\text{-}\lim_n e_n^{(\sigma)}x' = \sigma x'$ and $\sup_n x'e_n^{(\sigma)} = w^*\text{-}\lim_n x'e_n^{(\sigma)} = x'\sigma$ $(x'\in\frak X'_+)$; and lastly, the absence of complemented copies of $\ell_1$ in $\frak X$ (i.e., the second requirement of (i) above) would imply that (i) from Theorem~\ref{atomicmain} holds, while $X$ reflexive and $\dim{e_n^{(\sigma)}\frak X'e_n^{(\sigma)}}<\infty$ $(n\in\Bbb N,\,\sigma\in\Pi)$ (i.e., (ii) above) would imply that (ii) from Theorem~\ref{atomicmain} holds. Trivially, if (iii) above holds, then so does the condition under the second bullet point of Theorem~\ref{nonatomicmain}. 
		
		Now let $D:\cal A\to\frak X'$ be a bounded derivation. To finish our proof, we will show that $w^*\text{-}\lim_\sigma D(\sigma a\sigma) = D(a)$ $(a\in\cal A)$. To this end, let $\omega$ be the topology on $\frak X'$ generated by the seminorms $\omega_{\mathbf{b},\mathbf{f}}$ $(\mathbf{b}\in\cal B\times\cal B,\, \mathbf{f}\in \frak X\times (\frak X\cap (\frak X'e)^\perp)\times (\frak X\cap (e\frak X')^\perp))$, where $\omega_{\mathbf{b},\mathbf{f}}:\frak X'\to\Bbb R_+\cup\{0\}$, $x'\mapsto |f_1(b_1x'b_2)| + |f_2(b_1x')| + |f_3(x'b_2)|$. Clearly, $\omega$ is coarser than the weak*-topology on $e\frak X'+\frak X'e =: \frak Y$. Furthermore, $\frak Y$ with the topology induced by $\omega$ is Hausdorff. The proof of this goes along the same lines as in the proof of Theorem~\ref{atomicmain}. Simply note that the maps $x'\mapsto ex'$, $\frak X'\to\frak X'$ and $x'\mapsto x'e$, $\frak X'\to\frak X'$ form a commuting pair of orthogonal projections, and that for every $x\in\frak X$ and $x'\in\frak X'$, $(ex')(x) = \lim_\sigma \lim_n (e_n^{(\sigma)}x')(x)$ and $(x'e)(x) = \lim_\sigma \lim_n (x'e_n^{(\sigma)})(x)$.
		
		To achieve the desired conclusion, it will suffice to show that $\omega\text{-}\lim_\sigma \sigma D(a\sigma) = eD(a)$ and $\omega\text{-}\lim_\sigma D(\sigma) a\sigma = D(e)a$ on $(\frak Y,\omega)$, for then, since $e\frak X'$ and $\frak X'e$ are weak*-closed and both nets are norm-bounded, we would have that 
		\[
		\begin{split}
			D(a) &= \omega\text{-}\lim_\sigma \sigma D(a\sigma) + \omega\text{-}\lim_\sigma D(\sigma) a\sigma \\
			&= w^*\text{-}\lim_\sigma \sigma D(a\sigma) + w^*\text{-}\lim_\sigma D(\sigma) a\sigma = w^*\text{-}\lim_\sigma D(\sigma a\sigma).	
		\end{split}
		\] 
		Since (by hypothesis) $w^*\text{-}\lim_\sigma \sigma D(a) = eD(a)$, to show $\omega\text{-}\lim_\sigma \sigma D(a\sigma) = eD(a)$, it will suffice to show $\lim_\sigma \omega_{\mathbf{b},\mathbf{f}}(\sigma D(a\sigma-a)) = 0$ for every seminorm in the generating system for $\omega$. In turn, the latter follows on noting that for any $\omega_{\mathbf{b},\mathbf{f}}$,   
		\[
		\begin{split}
			\omega_{\mathbf{b},\mathbf{f}}(\sigma D(a\sigma-a)) &= |f_1(b_1\sigma D(a\sigma-a)b_2)| + |f_2(b_1\sigma D(a\sigma-a))| \\
			&= |f_1(D(b_1\sigma(a\sigma-a))b_2) - f_1(D(b_1\sigma)(a\sigma-a)b_2)| \\
			&\hspace{6cm} + |f_2(D(b_1\sigma(a\sigma-a)))| \\
			&\leq \|f_1\|\|D\|(\|b_1\sigma(a\sigma-a)\|\|b_2\|+\|b_1\sigma\|\|(a\sigma-a)b_2\|) \\
			&\hspace{6cm} + \|f_2\|\|D\|\|b_1\sigma(a\sigma-a)\|,
		\end{split}
		\]
		and as $b_1,b_2\in\cal F(X)$, both $\|b_1\sigma(a\sigma-a)\|$ ($ = \|(b_1\sigma-b_1)a\sigma + ((b_1a)\sigma-(b_1a)) + (b_1-b_1\sigma)a\|$) and $\|(a\sigma-a)b_2\|$ ($ = \|a(\sigma b_2-b_2)\|$) tend to $0$. As for $\omega\text{-}\lim_\sigma D(\sigma) a\sigma = D(e)a$, notice that for every seminorm $\omega_{\mathbf{b},\mathbf{f}}$,
		\[
		\begin{split}
			\omega_{\mathbf{b},\mathbf{f}}(D(\sigma-e)a) &= |f_1(b_1D(\sigma-e)ab_2)| + |f_3(D(\sigma-e)ab_2)| \\
			&= |f_1(b_1D((\sigma-e)ab_2)) - f_1(b_1(\sigma-e)D(ab_2))| + |f_3(D(\sigma-e)ab_2)| \\
			&= \|f_1\|\|D\|(\|b_1\|\|(\sigma-e)ab_2\|+\|b_1(\sigma-e)\|\|ab_2\|) \\
			&\hspace{7cm} + \|f_3\|\|D\|\|(\sigma-e)ab_2\|, 
		\end{split}	
		\]
		and 
		\[
		\begin{split}
			\omega_{\mathbf{b},\mathbf{f}}(D(\sigma)(a\sigma-a)) &= |f_1(b_1D(\sigma)(a\sigma-a)b_2)| + |f_3(D(\sigma)(a\sigma-a)b_2)| \\
			&\leq (\|f_1\|\|b_1\|\|D\|\|\sigma\| + \|f_3\|\|D\|\|\sigma\|)\|(a\sigma-a)b_2\|, 
		\end{split}
		\]
		so $\lim_\sigma \omega_{\mathbf{b},\mathbf{f}}(D(\sigma-e)a) = 0 = \lim_\sigma \omega_{\mathbf{b},\mathbf{f}}(D(\sigma)(a\sigma-a))$, for $\|(\sigma-e)ab_2\|$, $\|b_1(\sigma-e)\|$ and $\|(a\sigma-a)b_2\|$, all tend to $0$ with $\sigma$. It follows from this that $\lim_\sigma \omega_{\mathbf{b},\mathbf{f}}(D(\sigma)a\sigma-D(e)a) = 0$, and hence, the desired result.  
	\end{proof}
	
	\begin{cor}
		Let $X$, $\Pi$, $(x_i^{(\sigma)})$ and $(e_n^{(\sigma)})$ $(\sigma\in\Pi)$ be as in Theorem~\ref{nonatomicmain}, and suppose in addition that $\lim_\sigma \|\lambda-\lambda\sigma\| = 0$ $(\lambda\in X')$. Let $\frak X$ be a Banach lattice $\cal A^r(X)$-module in which $(e_n^{(\sigma)}x)$ and $(xe_n^{(\sigma)})$ converge for every $x\in\frak X$ and $\sigma\in\Pi$. If in addition any of the following holds, then every bounded derivation $D:\cal A^r(X)\to\frak X'$ is regular:
		\begin{itemize}
			\item[i)] $X$ contains no copies of $\ell_1$ and $\frak X$ contains no complemented copies of $\ell_1$; or
			
			\item[ii)] $X$ is reflexive and $\dim{e_n^{(\sigma)}\frak X'e_n^{(\sigma)}}<\infty$ $(n\in\Bbb N,\,\sigma\in\Pi)$.
		\end{itemize}
	\end{cor}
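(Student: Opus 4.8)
The plan is to verify the hypotheses of Theorem~\ref{nonatomicmain} and then invoke it; the argument runs parallel to that of Corollary~\ref{C1}, with $\cal A := \cal A^r(X)$ in place of $\cal L^r(X)$. I would begin by identifying the algebras $\cal A_\sigma$. The maps $a\mapsto\imath_\sigma ap_\sigma$ and $b\mapsto p_\sigma b\imath_\sigma$ are bounded for the regular norm, send finite-rank operators to finite-rank operators, and satisfy $p_\sigma(\imath_\sigma ap_\sigma)\imath_\sigma = a$ (since $p_\sigma\imath_\sigma = \mathrm{id}_{\sigma(X)}$); a density argument with these two maps shows $\cal A_\sigma = \cal A^r(\sigma(X))$ for every $\sigma\in\Pi$. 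In both cases~(i) and~(ii), $\sigma(X)$ contains no copy of $\ell_1$ — directly in case~(i), and because $X$ (hence $\sigma(X)$) is reflexive in case~(ii) — so the unconditional basis $(x_i^{(\sigma)})$ is shrinking by \cite[Theorem~1.c.9]{LT}, and the natural projections $(e_n^{(\sigma)})$ therefore form a norm-bounded approximate identity for $\cal A^r(\sigma(X)) = \cal A_\sigma$; in particular $\cal A_\sigma\cal A_\sigma = \cal A_\sigma$.

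The remaining per-$\sigma$ hypotheses are checked exactly as in Corollaries~\ref{cor3} and~\ref{C1}. The convergence of $(xe_n^{(\sigma)})$ and $(e_n^{(\sigma)}x)$ makes $E_0^{(\sigma)}:x\mapsto\lim_n xe_n^{(\sigma)}$ and $F_0^{(\sigma)}:x\mapsto\lim_n e_n^{(\sigma)}x$ bounded projections on $\frak X$, so that $E_\sigma = (E_0^{(\sigma)})'$ and $F_\sigma = (F_0^{(\sigma)})'$ have weak*-closed ranges. Since $(e_n^{(\sigma)})$ is a bounded approximate identity, $\|e_n^{(\sigma)}a - a\|_r\to 0$ for $a\in\cal A_\sigma$; hence, for positive $a$ and $x'$, the sequence $(e_n^{(\sigma)}a)x'$ is increasing and norm-convergent to $ax'$, so its supremum is $ax'$, giving $\sup_n e_n^{(\sigma)}ax' = ax'$, and symmetrically $\sup_n x'ae_n^{(\sigma)} = x'a$. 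Finally, in case~(i) the absence of complemented copies of $\ell_1$ in $\frak X$ forces $\frak X'$ to contain no copy of $c_0$ (\cite[Chapter~V, Theorem~10]{Di}), so condition~(i) of Theorem~\ref{atomicmain} holds for every pair $\cal A_\sigma,\frak X_\sigma$; in case~(ii), $\dim e_n^{(\sigma)}\frak X'e_n^{(\sigma)}<\infty$ yields $\dim q_i^{(\sigma)}\frak X'q_j^{(\sigma)}<\infty$, which together with reflexivity of $\sigma(X)$ gives condition~(ii). Thus the first bullet point of Theorem~\ref{nonatomicmain} is satisfied.

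The only genuinely new ingredient is the continuity condition $w^*\text{-}\lim_\sigma D(\sigma a\sigma) = D(a)$ $(a\in\cal A)$. Unlike in Corollary~\ref{C1}, where $\mathrm{id}_X\in\cal L^r(X)$ had to be handled through an auxiliary topology, here the matter is cleaner, because every element of $\cal A^r(X)$ is a regular-norm limit of finite-rank operators; I would in fact establish the stronger statement $\|\sigma a\sigma - a\|_r\to 0$ for all $a\in\cal A^r(X)$. For a finite-rank $a = \sum_k\lambda_k\otimes y_k$ one writes $\sigma a\sigma - a = \sigma a(\sigma - \mathrm{id}) + (\sigma - \mathrm{id})a$; the estimate $\|(\sigma - \mathrm{id})a\|_r\leq\sum_k\|\lambda_k\|\,\|(\sigma - \mathrm{id})y_k\|$ tends to $0$ because $\sigma\to\mathrm{id}$ strongly (a consequence of $\overline{\bigcup_\sigma\sigma(X)} = X$ and the boundedness of $\Pi$), while $\|a(\sigma - \mathrm{id})\|_r\leq\sum_k\|\lambda_k\sigma - \lambda_k\|\,\|y_k\|\to 0$ by the hypothesis $\lim_\sigma\|\lambda - \lambda\sigma\| = 0$; a routine density argument using $\sup_\sigma\|\sigma\|<\infty$ extends this to all of $\cal A^r(X)$. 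Continuity of $D$ then gives $\|D(\sigma a\sigma) - D(a)\|\to 0$, and a fortiori the required weak*-limit, so Theorem~\ref{nonatomicmain} yields the regularity of $D$. The points demanding care are the identification $\cal A_\sigma = \cal A^r(\sigma(X))$ and the norm-convergence argument just sketched; everything else is a transcription of the proofs of Corollaries~\ref{cor3} and~\ref{C1}.
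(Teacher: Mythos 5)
Your proposal is correct and follows essentially the same route as the paper's proof: identify $\cal A_\sigma = \cal A^r(\sigma(X))$, get shrinking bases from the absence of $\ell_1$-copies (directly in case (i), via reflexivity in case (ii)) so that $(e_n^{(\sigma)})$ is a bounded approximate identity, verify the remaining per-$\sigma$ hypotheses exactly as in Corollaries~\ref{cor3} and~\ref{C1}, and then obtain the weak*-continuity condition from the norm convergence $\lim_\sigma\|\sigma a\sigma - a\|_r = 0$, proved by density of $\cal F(X)$ in $\cal A^r(X)$ together with $\lim_\sigma\|x-\sigma x\| = 0$ and $\lim_\sigma\|\lambda-\lambda\sigma\| = 0$. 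The paper compresses several of these steps (e.g., it asserts $\cal A_\sigma = \cal A^r(\sigma(X))$ without the density argument you supply), but the substance is identical.
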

	
	\begin{proof}
		To simplify notations, let $\cal A := \cal A^r(X)$. Note first $\cal A_\sigma = \cal A^r(\sigma(X))$ $(\sigma\in\Pi)$. Like in the previous corollary, the absence of copies of $\ell_1$ in $X$ implies $(x_i^{(\sigma)})$ is shrinking $(\sigma\in\Pi)$. It follows that $(e_n^{(\sigma)})$ is a norm-bounded approximate identity for $\cal A_\sigma$, and hence, that $\cal A_\sigma\cal A_\sigma = \cal A_\sigma$. Also from the fact that $(e_n^{(\sigma)})$ is an a.i.~for $\cal A_\sigma$, one readily obtains that $\sup_n e_n^{(\sigma)}ax' = ax'$ and $\sup_n x'ae_n^{(\sigma)} = x'a$ $(a\in (\cal A_\sigma)_+,\,x'\in\frak X')$. That for every $\sigma$ the ranges of $E_\sigma$ and $F_\sigma$ are weak*-closed follows from the fact that they are both dual projections (see the proof of Corollary~\ref{cor3}). That the pair $\cal A_\sigma$, $\frak X_\sigma$ satisfies either (i) or (ii) of Theorem~\ref{nonatomicmain}, is guarantee, like in the previous corollary, by conditions (i) and (ii) of the hypotheses. 
		
		Lastly, note that $\lim_\sigma \|\sigma a\sigma - a\| = 0$ $(a\in\cal A^r(X))$ (indeed, recall $\lim_\sigma \|x-\sigma x\| = 0$ $(x\in X)$, $\lim_\sigma \|\lambda-\lambda\sigma\| = 0$ $(\lambda\in X')$, and $\cal F(X)$ is norm-dense in $\cal A^r(X)$), so if $D:\cal A^r(X)\to\frak X'$ is a bounded derivation, then $D(a) = D(\lim_\sigma \sigma a\sigma) = \lim_\sigma D(\sigma a\sigma) = w^*\text{-}\lim_\sigma D(\sigma a\sigma)$ $(a\in\cal A^r(X))$. One can then apply Theorem~\ref{nonatomicmain}.
	\end{proof}

\end{document}